\newcommand{\alphlist}{\begin{list}{(\alph{enumi})}{\usecounter{enumi}\setlength{\parsep}{2pt}
      \setlength{\itemsep}{1pt} \setlength{\topsep}{5pt}
      \setlength{\partopsep}{3pt}}}
\newcommand{\arablist}{\begin{list}{(\arabic{enumi})}{\usecounter{enumi}\setlength{\parsep}{2pt}
          \setlength{\itemsep}{1pt} \setlength{\topsep}{5pt}
          \setlength{\partopsep}{3pt}}}
\newcommand{\romanlist}{\begin{list}{(\roman{enumi})}{\usecounter{enumi}\setlength{\parsep}{2pt}
              \setlength{\itemsep}{1pt} \setlength{\topsep}{5pt}
              \setlength{\partopsep}{3pt}}}
\newcommand{\Romanlist}{\begin{list}{(\Roman{enumi})}{\usecounter{enumi}\setlength{\parsep}{2pt}
              \setlength{\itemsep}{1pt} \setlength{\topsep}{5pt}
              \setlength{\partopsep}{3pt}}}
\newcommand{\bulletlist}{\begin{list}{$\bullet$}{\setlength{\parsep}{2pt}
                \setlength{\itemsep}{1pt} \setlength{\topsep}{5pt}
                \setlength{\partopsep}{3pt}\setlength{\leftmargin}{15pt}}} 
\newcommand{\Alphlist}{\begin{list}{(\Alph{enumi})}{\usecounter{enumi}\setlength{\parsep}{2pt}
      \setlength{\itemsep}{1pt} \setlength{\topsep}{5pt}
      \setlength{\partopsep}{3pt}}}
 \newcommand{\listend}{\end{list}}
\newcommand{\T}{\ensuremath{\mathbb{T}}}
\newcommand{\N}{\ensuremath{\mathbb{N}}} 
\newcommand{\R}{\ensuremath{\mathbb{R}}}
\newcommand{\Z}{\ensuremath{\mathbb{Z}}}
\newcommand{\Q}{\ensuremath{\mathbb{Q}}}
\newtheorem*{teoa}{Theorem A}
\newtheorem{teo}{Theorem}
\newtheorem{lema}[teo]{Lemma}
\newtheorem{corolario}[teo]{Corollary}
\newtheorem*{corb}{Corollary B}
\newtheorem{proposicao}[teo]{Proposition}
\theoremstyle{definition}
\newtheorem{definicao}[teo]{Definition}
\theoremstyle{remark}
\newtheorem{obs}[teo]{Remark}
\newcommand{\F}{\mathcal{F}}
\newcommand{\g}{\tilde \gamma_0}
\newcommand{\bx}{\beta_{\tilde{x}}}
\newcommand*\dif{\mathop{}\!\mathrm{d}}
\renewcommand{\T}{\mathbb{T}^2}
\begin{document}

\title[No sublinear diffusion for a class of torus homeomorphisms]{Inexistence of sublinear diffusion for a class of torus homeomorphisms}
\author{Guilherme Silva Salomão}\thanks{G. S. S. was supported by Fapesp,  F. T. was partially supported by the Alexander von Humboldt foundation and by Fapesp and CNPq.}
\author{Fabio Armando Tal}
\address{Instituto de Matemática e Estatística, Rua do Mat\~ao 1010, Cidade Universitária, São Paulo, SP, Brazil, 05508-090}
\email{salomao.guilherme@gmail.com,\, fabiotal@ime.usp.br}

\begin{abstract}
 We prove that, if $f$ is a homeomorphism of the two torus isotopic to the identity whose rotation set is a non-degenerate segment and $f$ has a periodic point, then it has uniformly bounded deviations in the direction perpendicular to the segment.
\end{abstract}

\maketitle
\section{Introduction}
The study of surface dynamics from a topological viewpoint has been gathering increasing attention in the last decade, in large part because of the developments of new tools and techniques that have been proven effective in tackling previously hopeless problems. A great deal of these developments have been tied to recent improvements in both Brouwer theory and Rotation theory. In particular, the search for a greater understanding of the dynamics of torus homeomorphisms in the isotopy class of the identity has been one of the motivating forces behind the developments, due to its connection to relevant physical dynamics like Hamiltonian homeomorphisms in general or specific models as the  Kicked-Harper model and the Zaslavsky-Web maps. 

Rotation theory for homeomorphisms of the $2$-torus $\T=\R^2/\Z^2$ appeared in the early 90s as an extension of the ideas of the Poincaré rotation number for homeomorphisms of the circle. Given $f:\T\to\T$ a torus homeomorphism isotopic to the identity, and $\widetilde{f}:\R^2\to\R^2$ a lift of $f$ to the universal covering of $\T$ one can define, following \cite{MisiurewiczZiemian1989RotationSets}, the {\it{rotation set}} $\rho(\widetilde{f})$ of $\widetilde{f}$ as:
$$\rho(\widetilde{f}):=\{v \mid \exists n_k\to \infty, \widetilde{z}_k\in \R^2, \lim_{k\to\infty}\frac{\widetilde{f}^{n_k}(\widetilde{z}_k)-\widetilde{z}_k}{n_k}=v\},$$
which is always compact and convex.
This notion, which is invariant by change of coordinates in the isotopy class of the identity,  has been shown to be of great utility in describing the dynamics of these maps.  For instance, it is in some cases possible to deduce, just by the analysis of the rotation sets, that the dynamics has periodic points of arbitrarily large period \cite{franks:1989}, that is has positive entropy \cite{llibre/mackay:1991}, or even that it has a well defined chaotic region \cite{KoropeckiTal2012StrictlyToral}.

One relevant feature of rotation sets is that they, as per the definition, describe only linear rates of displacement in the lift. But a question that has appeared in several contexts is to determine if it is possible that sublinear displacements can exists that are not captured by it. For, while it follows directly from the definitions that there exists $M_0, N_0>0$ such that for all $\widetilde{z}$ in $\R^2$ and all $n>N_0$, $d\left( (\widetilde{f}^n(\widetilde{z})-\widetilde{z})/n, \rho(\widetilde{f})\right) <M_0$, one is left to wonder if there it is also possible to obtain a better estimate. Specifically, does it also hold that there exists some $M_1>0$ such that for all $\widetilde{z}$ in $\R^2$ and all $n$, $d\left( \widetilde{f}^n(\widetilde{z})-\widetilde{z}), n \rho(\widetilde{f}) \right) <M_1$? If the latter occur, we say that $f$ has uniformly bounded deviation from its rotation set. A similar concept is that of bounded deviations in a direction $w$. Let $w\in\R^2_*$, define the projection in the $w$ direction as $P_w:\R^2\to\R^2, P_w(x)=\langle x, w/\Vert w \Vert\rangle$. One says that $f$ has {\it{ bounded $w$-deviations}} if there exists $M_1>0$ such that  for all $\widetilde{z}$ in $\R^2$ and all $n$, $d\left( P_w(\widetilde{f}^n(\widetilde{z})-\widetilde{z}), P_w(n \rho(\widetilde{f}) \right) <M_1$.

Analysis of bounded deviations for homeomorphisms of $\T$ is a topic increasingly present in the literature. This is due both to its intrinsic interest, but also to its use a fundamental tool in solving some traditional problems in the field. For instance, it has appeared in the proof of Boyland's conjecture in the torus and in the closed annulus \cite{AddasZanata2015BoundedMeanMotionDiffeos, LeCalvezTal2015ForcingTheory, ConejerosTal2}, in determining the existence of Aubry-Mather sets for homeomorphisms of the closed annulus \cite{ConejerosTal2}, in the study of the existence of irrational rotation factors for the dynamics \cite{Jaeger2009Linearisation, JaegerTal2016IrrationalRotationFactors, JaegerPasseggi2015SemiconjugateToIrrational, kocsard2017rotational} and in the attempts to solve the remaining case of the Franks-Misiurewicz Conjecture \cite{KoropeckiPasseggiSambarino2016FMC, passeggi2018deviations, kocsard2016dynamics}. While it is known that bounded deviations don't necessarily hold in all situations, in particular when the rotation set is a singleton  (see \cite{KoropeckiTal2012Irrotational, kocsard/koropecki:2007}), there are several cases where it can be established as, for instance, if $\rho(\widetilde{f})$ has nonempty interior (\cite{AddasZanata2015BoundedMeanMotionDiffeos, LeCalvezTal2015ForcingTheory}). Furthermore, it is also known (see \cite{Davalos2013SublinearDiffusion, GuelmanKoropeckiTal2012Annularity} that if the rotation set is a line segment with two different points with rational coordinates, then $f$ has bounded deviations in the direction that is perpendicular to the the segment.

In all these studies, one case which remained unsolved in either direction is to determine, wherever the rotation set was a non-degenerate line segment with at most one rational point, if bounded deviations for a given direction still needed to hold. Part of the problem here is the relatively lack of examples of these situations. For instance, it was not known until the recent work by Avila if there existed a homeomorphism whose rotation set was a non-degenerate line segment without rational points. On the other hand, while there existence of examples of rotation sets which are non-degenerate line intervals with a single rational point is a folklorical result, these examples always had the rational point as an extremity of the line segment. Indeed, in \cite{ LeCalvezTal2015ForcingTheory} it was shown that this must be the case, that is, that there is no homeomorphism of $\T$ such that its rotation set is a non-degenerate line segment with a single bi-rational point in its relative interior.

In this paper we deal exactly with this latter situation. Let us denote by $(x)_1$ and $(x)_2$ the canonical first and second coordinates, respectively, of a point $x\in\R^2$, and given $\rho_0=((\rho_0)_1,(\rho_0)_2)$, let us denote $\rho_0^\perp=(-(\rho_0)_2,(\rho_0)_1)$ and, if $(\rho_0)_1\not=0$, denote $\tan(\rho_0)=(\rho_0)_2/(\rho_0)_1$. Our main result is that bounded deviation in the perpendicular direction must exists:

\begin{teoa}\label{teoremaA}
Let $f:\T\to\T$ be a torus homeomorphism isotopic to the identity, $\tilde{f}:\R^2\to\R^2$ a lift of $f$ and $\pi:\R^2\to\T$ the covering map.
Suppose that $\rho(\tilde f)=\{t\rho_0\mid 0\leq t\leq 1\}$, where $\tan(\rho_0)\notin\mathbb{Q}$. Then  there is $M>0$ such that $$|\langle \tilde{f}^n(\tilde{z})-\tilde{z},\rho_0^\perp\rangle|<M,$$ for every $\tilde{z}\in\R^2$ e $n\in\Z$.
\end{teoa}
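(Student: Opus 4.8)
The plan is to assume, for contradiction, that the deviations in the $\rho_0^\perp$ direction are unbounded, and to derive the existence of rotation vectors outside the segment $\{t\rho_0 : 0 \le t \le 1\}$, contradicting the hypothesis that $\rho(\tilde f)$ equals that segment. Because $\tan(\rho_0) \notin \mathbb{Q}$, the segment contains at most one rational point (in fact, if $\rho_0$ itself were rational both endpoints would be, forcing $\tan(\rho_0) \in \mathbb{Q}$, so the only possible rational point in the closed segment is the endpoint $0$). First I would fix a lift $\tilde f$ normalized so that, by the hypothesis and the result of Le Calvez--Tal quoted in the introduction, the rational point $0$ must be an \emph{endpoint} of the segment; this forces one to work with $\tilde f$ directly (the fixed point of $f$ lifts to a fixed point of $\tilde f$). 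The strategy then splits into producing, from a large positive deviation and from a large negative deviation, forcing configurations that manufacture new rotation vectors.

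The main engine will be \emph{forcing theory} (Le Calvez--Tal) applied to the maximal isotopy / transverse foliation associated to $\tilde f$ and its fixed point. Concretely: since $\tilde f$ has a fixed point $\tilde p$, there is an identity isotopy whose fixed point set contains $\pi(\tilde p)$, a maximal such isotopy, and a transverse foliation $\mathcal F$ on the complement. A point $\tilde z$ with $\langle \tilde f^n(\tilde z) - \tilde z, \rho_0^\perp\rangle$ very large produces a long transverse trajectory in the lifted foliation that drifts far in the $\rho_0^\perp$ direction. The key dichotomy is: either such a trajectory, together with translates of itself under the deck group, crosses itself in a topologically essential way (producing, via the forcing lemma, a periodic orbit whose rotation vector has a nonzero $\rho_0^\perp$-component, hence lies off the segment — contradiction), or all these long transverse excursions are ``parallel'' and can be used to build an essential annular region on which the dynamics is, up to the foliation, a translation with drift both in the $+\rho_0^\perp$ and (using the negative-deviation assumption) $-\rho_0^\perp$ directions, which is again impossible in an annulus. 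I would carry this out in the following order: (1) set up the maximal isotopy and the transverse foliation $\widetilde{\mathcal F}$ on $\R^2$ together with the $\Z^2$-action; (2) translate the deviation hypothesis into a statement about the transverse trajectories $\widetilde{I}^{\Z}_{\widetilde{\mathcal F}}(\tilde z)$ travelling a large signed $\rho_0^\perp$-distance; (3) invoke the intersection/forcing results to get that essential self-intersections of these trajectories (or of their $\Z^2$-translates) yield periodic points with rotation vectors having $\langle \cdot, \rho_0^\perp\rangle \ne 0$; (4) rule out the alternative ``no essential intersection'' case by showing the long trajectories then foliate an essential sub-annulus of $\T \times \R$ or of $\R^2/(\text{line})$, on which a drift of a definite sign in the $\rho_0^\perp$-direction coming from unbounded positive deviation is incompatible with a corresponding drift of the opposite sign from unbounded negative deviation (or with the presence of the fixed point in that annulus); (5) conclude.

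The hard part, and the step I expect to be the main obstacle, is step (4): making precise the sense in which ``no essential self-intersection of the lifted transverse trajectories'' forces the existence of an invariant essential annulus and controlling the geometry so that the two unbounded-deviation hypotheses (positive and negative) are genuinely contradictory there. This requires a careful analysis of how the foliation $\widetilde{\mathcal F}$ sits with respect to the direction $\rho_0^\perp$, using that the slope $\tan(\rho_0)$ is irrational to prevent the drift from being ``closed up'' by a deck transformation, and it is where the at-most-one-rational-point structure of the segment is genuinely used. A secondary technical point is handling the possibility that $\rho_0$ itself is irrational (no rational point at all on the segment): then there is no fixed point of $f$ forced by rotation theory, so one must use the periodic point hypothesis of Theorem~A to pass to an iterate $f^q$ and a lift for which a fixed point of positive rotation vector exists, and check that bounded $\rho_0^\perp$-deviation for $\tilde f^q$ transfers back to $\tilde f$; I would dispatch this reduction at the very beginning.
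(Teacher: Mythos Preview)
Your framework is right---maximal isotopy, transverse foliation, and the observation (your step (3)) that an $\tilde\F$-transverse intersection of an admissible path with a nontrivial $\Z^2$-translate of itself produces, via Theorem~\ref{teoauto}, periodic points with rotation vectors off the segment. This is exactly Lemma~\ref{lemaauto} in the paper, and it is the mechanism that ultimately gives the contradiction.

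The gap is in how you propose to reach that self-intersection. Your dichotomy ``either the drifting trajectory self-intersects with a translate, or everything is parallel and we get an annulus with contradictory drifts'' is not the route the paper takes, and your step~(4) is where the argument is genuinely missing content. The paper does \emph{not} try to analyse the drifting trajectory $\bx$ directly, nor does it argue about invariant annuli. Instead it introduces a second, auxiliary object: the full transverse trajectory $\g=\tilde I^{\Z}_{\tilde\F}(\tilde z_0)$ of a \emph{recurrent} point $\tilde z_0$ with rotation vector exactly $\rho_0$ (supplied by Proposition~\ref{proppqeps}). A long chain of lemmas establishes that $\g$ is a proper line meeting each leaf at most once, that $\g$ has no $\tilde\F$-transverse intersection with any translate $\g+w$, and---crucially---that the position of the singularities of $\tilde\F$ relative to $l(\g)$ versus $r(\g)$ is governed by the total order on $\Z^2$ coming from $\langle\,\cdot\,,\rho_0^\perp\rangle$ (this is where irrationality of the slope is used, Lemmas~\ref{lemacorte}--\ref{lemacorte3}). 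The unbounded-deviation hypothesis then produces, via Lemma~\ref{lema3}, a point $\tilde x$ near $\tilde z_0$ with $\tilde f^N(\tilde x)$ near $\tilde z_0+P$ for some $P$ with large $\rho_0^\perp$-component; its transverse trajectory $\bx$ is forced to cross $r(\g+w)$ and $l(\g+w)$ for every $w$ between $-(1,0)$ and $P+(1,0)$ in this order, hence to intersect each such $\g+w$ $\tilde\F$-transversally. Gluing a sub-arc of $\bx$ with a sub-arc of some $\g+w_1$ yields an admissible path $\beta'$ which, using the recurrence of $\g$, is shown to intersect a nontrivial translate of itself $\tilde\F$-transversally---contradicting Lemma~\ref{lemaauto}. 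So the contradiction is always obtained through an explicit construction of the forbidden self-intersection; there is no ``parallel/no-intersection'' case to rule out separately.

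Two smaller points. First, your worry about ``no rational point at all on the segment'' is a misreading: the segment is $\{t\rho_0:0\le t\le 1\}$, so $0$ is always present and, by Franks, $\tilde f$ has a fixed point. Second, the paper only needs unbounded deviation in \emph{one} direction of $\rho_0^\perp$ (it assumes the positive direction without loss of generality); it does not pit positive drift against negative drift as in your step~(4).
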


An immediate corollary, using the results from D\'avalos (\cite{Davalos2013SublinearDiffusion}) and Le Calvez and the second author (\cite{LeCalvezTal2015ForcingTheory})) is that

\begin{corb}
Let $f:\T\to\T$ be a torus homeomorphism isotopic to the identity, $\tilde{f}:\R^2\to\R^2$ a lift of $f$ and $\pi:\R^2\to\T$ the covering map.
Suppose that $\rho(\tilde f)$ is a non-degenerate line segment and that $f$ has at least one periodic point. Then $f$ has bounded deviations in the direction perpendicular to $\rho(\tilde f)$. 
\end{corb}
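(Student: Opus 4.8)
The plan is to reduce the general statement to Theorem A by a change of coordinates in the isotopy class of the identity, using the existence of a periodic point to pin down the arithmetic of the endpoints of the rotation segment. First I would invoke the result of Le Calvez and the second author (\cite{LeCalvezTal2015ForcingTheory}): since $\rho(\tilde f)$ is a non-degenerate line segment and $f$ has a periodic point, that periodic point realizes a rational vector in $\rho(\tilde f)$, and moreover this rational point cannot lie in the relative interior of the segment — it must be an endpoint. Call it $p \in \Q^2$. Composing $\tilde f$ with a suitable integer translation (which does not change $f$ nor the geometry of the problem, only shifts the rotation set by an integer vector) and then passing to a suitable iterate $f^q$, where $q$ is the period, we may assume $p = 0$, so that the rotation set of the new lift is $\{t\rho_0 \mid 0 \le t \le 1\}$ for some $\rho_0 \neq 0$; bounded deviation for $f^q$ in a direction is equivalent to bounded deviation for $f$ in that same direction, since $\tilde f^{nq+r}(\tilde z) - \tilde z$ differs from a telescoping sum of at most $q$ terms, each within a bounded distance of the compact set $q\rho(\tilde f)$, so the difference between the two notions is absorbed into the constant $M$.

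Next I would split into two cases according to the slope of $\rho_0$. If $\tan(\rho_0) \notin \Q$ (equivalently, the only rational point on the closed segment is the endpoint $0$), then Theorem A applies directly to the normalized lift and gives a constant $M$ with $|\langle \tilde f^n(\tilde z) - \tilde z, \rho_0^\perp \rangle| < M$ for all $\tilde z$ and all $n \in \Z$; since $0 \in \rho(\tilde f)$ we have $P_{\rho_0^\perp}(n\rho(\tilde f)) = \{0\}$, so this is exactly bounded deviation in the perpendicular direction, and transferring back along the coordinate change (an affine integer map, which distorts the perpendicular direction only by a bounded linear factor) completes this case. If instead $\tan(\rho_0) \in \Q$, then the segment joins two rational points, and we are in the setting already treated by D\'avalos \cite{Davalos2013SublinearDiffusion} (see also \cite{GuelmanKoropeckiTal2012Annularity}): such an $f$ has bounded deviations in the direction perpendicular to the segment, with no periodicity hypothesis needed. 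In both cases we conclude bounded $\rho(\tilde f)^\perp$-deviations, which is the assertion of Corollary B.

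The one point requiring a little care — and the step I expect to be the mildest obstacle — is making the reductions (integer translation, passage to the iterate $f^q$, and the linear change of coordinates) genuinely compatible with the quantitative conclusion: one must check that each of these operations preserves the \emph{uniform} boundedness of $\langle \tilde f^n(\tilde z) - \tilde z, w \rangle$ over all $\tilde z$ and all $n \in \Z$ (including negative $n$, which is handled by applying the bound to $\tilde f^{-1}$, whose rotation set is $-\rho(\tilde f)$), merely changing the constant $M$ and the direction $w$ by the corresponding linear factor. None of this is deep, but it is what turns the two cited theorems plus Theorem A into a clean statement; the genuine mathematical content is entirely in Theorem A (the irrational-slope case) and in the cited work of D\'avalos (the rational-slope case).
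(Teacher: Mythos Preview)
Your proposal is correct and matches the paper's own proof essentially step for step: the paper also splits into the case where $\rho(\tilde f)$ contains two rational points (handled by D\'avalos) versus a single rational point (where Theorem~C of \cite{LeCalvezTal2015ForcingTheory} forces that point to be an endpoint, after which one passes to $g=f^q$ with lift $\tilde g=\tilde f^q-(p_1,p_2)$ and applies Theorem~A). The only cosmetic difference is that the paper organizes the dichotomy as ``two rational points vs.\ one'' before normalizing, whereas you normalize first and then split on rational vs.\ irrational slope of $\rho_0$; these are equivalent, and your remark that a rational-slope segment through a rational point contains a second rational point is exactly what bridges the two formulations. Your mention of an additional ``linear change of coordinates'' is harmless but unnecessary---integer translation of the lift and passage to an iterate suffice.
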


Theorem A  should have plenty of applications and has already been used in \cite{xioachuansalvador2019}.
The main new technical development that allowed for this work was the introduction of the new forcing techniques for surface homeomorphisms in \cite{ LeCalvezTal2015ForcingTheory}, although in no way its just a straightforward application. Also, the theory is relatively recent, and this work has as a subproduct some new lemmas that may be useful in its application. The paper is organized as follow. In Section~\ref{sec:preliminaries} we describe the main tools necessary for our work and in Section~\ref{sec:mainresult} we prove the main theorem and its corollary.


\section{Preliminaries}\label{sec:preliminaries}

\subsection{Rotation theory for homeomorphisms of $\T$}
\label{caprot}

Let $f:\T\to\T$ be an homeomorphisms isotopic to the identity, where $\T=\R^2/\Z^2$, let $\tilde{f}:\R^2\to\R^2$ be a lift of $f$ to the universal covering of $\T$ and let $\pi:\R^2\to\T$ be the covering map. We already defined the  rotation set of $\tilde f$ at the introduction, also known as the 
 \emph{ Misiurewicz-Ziemian rotation set}. We say that a point $x\in\T$ has a \emph{rotation vector} $v$ if $\rho(\tilde f,x)=\lim_{n\to\infty}\frac{\tilde{f}^n(\tilde{x})-\tilde{x}}{n}=v,$ where $\tilde{x}$ is any point in $\pi^{-1}(x)$.
 
Let $\phi:\T\to\R^2$ be the displacement function $\phi(x)=\tilde{f}(\tilde x)-\tilde x$, where $\tilde x$ is some point in $\pi^{-1}(x)$. As $$\frac{1}{n}\sum_{k=0}^{n-1}\phi(f^k(x))=\frac{1}{n}\sum_{k=0}^{n-1}(\tilde{f}^{k+1}(\tilde x)-\tilde{f}^k(\tilde x))=\frac{\tilde{f}^n(\tilde x)-\tilde x}{n},$$ we have, by Birkhoff's Ergodic Theorem, that if $\mu$ is an ergodic borelian probability measure invariant by $f$, then $$\lim_{n\to\infty}\frac{1}{n}\sum_{k=0}^{n-1}\phi(f^k(x))=\int_{\T}\phi \dif\mu \quad\textrm{ for $\mu$ almost all $x\in\T$},$$ therefore allmost all points for $\mu$ have a rotation vector and it is equal to $\int_{\T}\phi \dif\mu$, which is called the \emph{rotation vector of the measure $\mu$}. It is well known also that (see \cite{MisiurewiczZiemian1989RotationSets})
$$\rho(\tilde f)=\left\{\int_{\T}\phi\dif\mu\mid\mu \textrm{ is a borel probability measure invariant by $f$}\right\},$$
which bridges the concept of rotation for points and the displacement for invariant measures. As a consequence, one obtains that rotation sets are always compact and convex subsets of the plane, and that if $v$ is an extremal point of the rotation set, then there exists an ergodic $f$-invariant measure $\mu$ whose rotation vector is $v$. 

Of particular importance for this work, we have that, whenever the rotation set of $\tilde f$ is a line segment with irrational slope that contains the point $(0,0)$, one must have that $(0,0)$ is an extremal point of $\rho(\tilde f)$ (by \cite{LeCalvezTal2015ForcingTheory}) and a result from Franks (see \cite{franks:1988a}) implies that $\tilde f$ must have a fixed point. Furthermore, since in this case $(0,0)$ is the only point in $\rho(\tilde f)\cap \Q^2$, then every periodic point of $f$ must be lifted to a periodic point of $\tilde f$.

\subsection{Essential dynamics}
We need the concept of essential points for the dynamics, developed in \cite{KoropeckiTal2012StrictlyToral, koropecki2018fully}. We refer to the papers for a complete exposition, only citing the required results.

An open subset $U\subset\T$ is \textit{inessential} if every closed curve contained in $U$ is null-homotopic in $\T$, otherwise $U$ is called \textit{essential}. A general subset $E\subset\T$ is inessential if it has a inessential open neighborhood, otherwise it is called essential. Finally, $E$ is said to be \emph{fully essential} if $\T\setminus E$ is inessential. 
\begin{definicao}\label{defdiness}
Let $x\in\T$ and $f:\T\to\T$ be a homeomorphism isotopic to the identity. We say that $x$ is \textit{an inessencial point of $f$} if $\cup_{k\in\Z}f^k(U)$ is inessential for some neighborhood $U$ of $x$, otherwise $x$ is called \textit{an essential point for $f$}. Furthermore, we say that $x$ is a \emph{fully essential point of $f$} if $\cup_{k\in\Z}f^k(U)$ is fully essential for any neighborhood $U$ of $x$.
\end{definicao}

The following proposition appeared in \cite{guelman2015rotation}.   $||x||_\infty$ denotes the infinity norm on $\R^2, \, ||x||_\infty=\max\{|(x)_1|,|(x)_2|\}$, where  $(x)_1$ and $(x)_2$ are the first and second canonical coordinates of a point $x\in\R^2$.

\begin{proposicao}
\label{propguelman2015rotation}
Let $O\subset\R^2$ be a connected open set such that $\bigcup_{n\in\Z}f^n(\pi(O))$ is fully essential and such that $\overline{\pi(O)}$ is inessential. Then there exists $M\in\N$ and $K\subset\R^2$ compact such that $[0,1]^2$ is contained in a bounded connected component of $\R^2\setminus K$ and $$K\subset\bigcup_{|i|\leq M,\, ||v||_{\infty}\leq M}\left(\tilde{f}^i(O)+v\right).$$
\end{proposicao}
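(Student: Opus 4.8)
The plan is to first replace the infinite union of iterates by a finite one, then manufacture an explicit ``frame'' around $[0,1]^2$ out of lifts of two loops that generate $\pi_1(\T)$.

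\textbf{Step 1: reduction to finitely many iterates.} Since $\bigcup_{n\in\Z}f^n(\pi(O))$ is fully essential, the set $\T\setminus\bigcup_n f^n(\pi(O))$ has an inessential open neighbourhood $G_0$; then $C:=\T\setminus G_0$ is compact, contained in $\bigcup_n f^n(\pi(O))$, and fully essential (because $\T\setminus C=G_0$ is inessential). By compactness $C\subseteq E_0:=\bigcup_{|n|\leq N_0}f^n(\pi(O))$ for some $N_0\in\N$, and since $\T\setminus E_0\subseteq G_0$ is inessential, $E_0$ is an \emph{open, fully essential, finite} union of iterates of $\pi(O)$, with $\pi^{-1}(E_0)=\bigcup_{|n|\leq N_0,\ v\in\Z^2}(\tilde f^n(O)+v)$.

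\textbf{Step 2: loops generating $\pi_1(\T)$.} Here I would invoke the standard dictionary of essential dynamics (see \cite{KoropeckiTal2012StrictlyToral, koropecki2018fully}): if $U\subseteq\T$ is open and $\T\setminus U$ is inessential, then $\pi^{-1}(U)$ is connected; equivalently, inclusion induces a surjection $\pi_1(U)\twoheadrightarrow\pi_1(\T)=\Z^2$. Applied to $E_0$, this gives that $E_0$ is connected and contains loops $\alpha,\beta$ based at a common point $p_0\in E_0$ whose classes in $\Z^2$ are $(1,0)$ and $(0,1)$. Lifting the paths $\alpha,\beta$ from a point $\tilde p_0\in\pi^{-1}(p_0)$, one gets compact arcs $\tilde\alpha,\tilde\beta\subseteq\pi^{-1}(E_0)$ with $\tilde\alpha$ joining $\tilde p_0$ to $\tilde p_0+(1,0)$ and $\tilde\beta$ joining $\tilde p_0$ to $\tilde p_0+(0,1)$; by compactness there is $M_1\in\N$ with $\tilde\alpha\cup\tilde\beta\subseteq\bigcup_{|n|\leq N_0,\ ||v||_\infty\leq M_1}(\tilde f^n(O)+v)$.

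\textbf{Step 3: the frame and a winding number.} For $N\in\N$ put
\[
K_N=\bigcup_{k=-N}^{N-1}\big((\tilde\alpha+(k,N))\cup(\tilde\alpha+(k,-N))\big)\ \cup\ \bigcup_{j=-N}^{N-1}\big((\tilde\beta+(N,j))\cup(\tilde\beta+(-N,j))\big).
\]
Since consecutive translates $\tilde\alpha+(k,N)$ and $\tilde\alpha+(k+1,N)$ share an endpoint (and likewise for $\tilde\beta$), $K_N$ is the image of a single closed loop $\lambda$ running through the four corners $\tilde p_0+(\pm N,\pm N)$ in cyclic order. It is compact; as $\pi^{-1}(E_0)$ is $\Z^2$-invariant and contains $\tilde\alpha\cup\tilde\beta$, we get $K_N\subseteq\bigcup_{|n|\leq N_0,\ ||v||_\infty\leq M_1+N}(\tilde f^n(O)+v)$; and, choosing $N$ large enough that the horizontal (resp. vertical) strips containing $\tilde\alpha$ (resp. $\tilde\beta$) are pushed past $[0,1]^2$, the four portions of $\lambda$ lie respectively in $\{(x)_2>1\}$, $\{(x)_1>1\}$, $\{(x)_2<0\}$, $\{(x)_1<0\}$, so $K_N\cap[0,1]^2=\emptyset$ and each corner lies in one of the four open quadrants determined by any $q\in[0,1]^2$. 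A routine winding-number computation (each portion of $\lambda$ stays in a half-plane avoiding $q$, and the corners hit the quadrants in cyclic order) then shows $\lambda$ has winding number $\pm 1$ about every $q\in[0,1]^2$; since $[0,1]^2$ is connected and disjoint from $K_N$, it lies in a single component of $\R^2\setminus K_N$, which therefore cannot be the unbounded one. Taking $K=K_N$ and $M=\max\{N_0,M_1+N\}$ completes the proof.

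The step I expect to be the main obstacle is Step 2: producing honest loops realizing the two generators of $\pi_1(\T)$ inside a \emph{finite} union of iterates; this is exactly the translation between full essentiality and connectedness of the preimage, and I would rely on the cited essential-dynamics papers for it rather than reprove it. The winding-number bookkeeping in Step 3 is elementary but must be set up carefully, since the lifted arcs $\tilde\alpha,\tilde\beta$ may wander; fixing at the outset strips containing them and choosing $N$ accordingly handles this. (I note that the hypothesis that $\overline{\pi(O)}$ is inessential is not actually used in this argument, though it does guarantee that for each fixed $n$ the sets $\tilde f^n(O)+v$, $v\in\Z^2$, are pairwise disjoint.)
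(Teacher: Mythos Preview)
The paper does not give its own proof of this proposition: it is quoted verbatim from \cite{guelman2015rotation} and used as a black box. So there is no proof in the paper to compare against.

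That said, your argument is correct and self-contained. Step~1 is the standard compactness reduction; Step~2 is exactly the point where the essential-dynamics input is needed, and the fact you invoke---that an open fully essential set $U\subseteq\T$ has connected $\pi^{-1}(U)$, equivalently that $\pi_1(U)\to\pi_1(\T)$ is onto---is indeed established in \cite{KoropeckiTal2012StrictlyToral}; and Step~3 is a clean way to finish: the concatenated translates of $\tilde\alpha,\tilde\beta$ do form a closed loop through the four corners $\tilde p_0+(\pm N,\pm N)$, each side lies in a half-plane missing $q\in[0,1]^2$ once $N$ is large, and the resulting winding number $\pm 1$ forces $[0,1]^2$ into a bounded complementary component. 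Your remark that the hypothesis on $\overline{\pi(O)}$ being inessential is not used in this argument is also correct.
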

 We also have that:
\begin{lema}\label{lemaessencial}
Let $f:\T\to\T$ be an homeomorphism isotopic to the identity, $\tilde{f}:\R^2\to\R^2$ a lift and $z_0\in\T$ a recurrent point such that $\rho(\tilde f,z_0)$ does not belong to $\Q^2$. If $\rho(\tilde f)$ is a nondegenerate line segment with irrational slope, then  $z_0$ is a fully essential point for $f$. In particular, for every $\varepsilon>0$, the set $U_{\varepsilon}=\bigcup_{i=0}^{\infty} f^{i}(\pi(B(\varepsilon, \tilde{z}_0)))$ is fully essential.
\end{lema}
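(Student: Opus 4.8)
The plan is to argue by contradiction: suppose $z_0$ is not a fully essential point for $f$. Since $z_0$ is recurrent, standard facts about the essential/inessential dichotomy (from \cite{KoropeckiTal2012StrictlyToral, koropecki2018fully}) tell us that the orbit closure of a neighborhood of $z_0$ is then either inessential, or essential but not fully essential. I would handle these two possibilities in turn. In the first case, $z_0$ is an inessential point, so there is a neighborhood $U$ of $z_0$ with $E:=\bigcup_{k\in\Z}f^k(U)$ inessential; then $E$ is contained in an open inessential $f$-invariant set, which lifts to a disjoint union of copies of a single bounded component $\widetilde{E}_0\subset\R^2$ of $\pi^{-1}(E)$, and the restriction of $\tilde f$ to $\widetilde{E}_0$ is a well-defined homeomorphism. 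Because $z_0$ is recurrent with $\rho(\tilde f,z_0)\notin\Q^2$, the point $\tilde z_0$ has an orbit under $\tilde f$ that stays in $\widetilde{E}_0$ but whose displacement $\tilde f^{n}(\tilde z_0)-\tilde z_0$ is unbounded (it grows linearly in the direction $\rho_0$), contradicting boundedness of $\widetilde{E}_0$. This is essentially the observation that a recurrent point with nonzero rotation vector cannot be trapped in a bounded disc, and that an inessential invariant open set lifts to bounded pieces.

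The harder case is when the orbit of every neighborhood of $z_0$ is essential but some such orbit $E$ fails to be fully essential, i.e.\ $\T\setminus E$ is essential. Here I would use that $\T\setminus E$ being essential means it carries an essential loop, so its complement structure forces $E$, and hence the dynamics of the $\tilde f$-orbit of $\tilde z_0$, to be confined to an essential sub-annulus of $\T$ — concretely, there is an essential simple closed curve (or an essential invariant "band") disjoint from the orbit of $z_0$. Lifting, the orbit of $\tilde z_0$ then stays in a strip bounded in one rational direction $w\in\Z^2$, so $\langle \tilde f^n(\tilde z_0)-\tilde z_0, w\rangle$ is bounded, which forces $\langle \rho(\tilde f,z_0), w\rangle = 0$. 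Since $w$ has rational (integer) coordinates and $\rho(\tilde f)$ is a line segment of irrational slope, the only rotation vector in $\rho(\tilde f)$ orthogonal to a rational vector is $0$ — but $\rho(\tilde f,z_0)\notin\Q^2$ in particular $\rho(\tilde f,z_0)\neq 0$, a contradiction. (One must be slightly careful: the confining object need not be $f$-invariant a priori, only its orbit-closure complement; but taking the orbit of a suitable small $U$ and using that $E$ itself is $f$-invariant by construction, the complement $\T\setminus E$ is also $f$-invariant, and an essential $f$-invariant closed set has a well-defined "slope" direction in which deviations are bounded.)

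Having reached a contradiction in both cases, $z_0$ is a fully essential point for $f$. For the final assertion, fix $\varepsilon>0$ and let $U=\pi(B(\varepsilon,\tilde z_0))$, a neighborhood of $z_0$; by Definition~\ref{defdiness} and the fact that $z_0$ is fully essential, $\bigcup_{k\in\Z} f^k(U)$ is fully essential, and since $z_0$ is recurrent its forward orbit is dense in its $\omega$-limit set, so in fact the forward union $U_\varepsilon=\bigcup_{i=0}^\infty f^i(U)$ already satisfies $\bigcup_{k\in\Z}f^k(U)\subseteq \overline{U_\varepsilon}$ up to the behavior near $z_0$ itself; more directly, recurrence of $z_0$ gives $n_j\to\infty$ with $f^{n_j}(z_0)\to z_0$, so $z_0\in\overline{U_\varepsilon}$ and for each $k<0$ one has $f^k(z_0)=\lim_j f^{k+n_j}(z_0)\in \overline{U_\varepsilon}$ with $k+n_j>0$ eventually, whence $\bigcup_{k\in\Z} f^k(U) \subseteq \overline{U_\varepsilon}$ and thus $U_\varepsilon$ is fully essential (full essentiality passes to supersets and, being an open-set notion here, to sets with fully essential closure). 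The main obstacle I anticipate is the second case: extracting a genuine bounded-deviation obstruction from the mere failure of full essentiality requires invoking the structure theory of essential invariant continua and the fact that an essential (but not fully essential) invariant open set leaves the complement controlling one homological direction; pinning down exactly which cited result delivers "bounded deviation in a rational direction $\Rightarrow$ that component of the rotation vector vanishes" is where the argument needs the most care.
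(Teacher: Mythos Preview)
Your two-case structure (inessential versus essential-but-not-fully-essential) is exactly the paper's, but both cases have gaps as written.

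\textbf{Case 1 (inessential).} The claim that ``the restriction of $\tilde f$ to $\widetilde{E}_0$ is a well-defined homeomorphism'' is wrong. Even after you use recurrence to see that the torus component $E_0\ni z_0$ is $f^N$-invariant for some least $N\ge 1$, the lift $\tilde f^N$ sends $\widetilde{E}_0$ to a \emph{translate} $\widetilde{E}_0+w$ for some $w\in\Z^2$, not to itself. So the $\tilde f$-orbit of $\tilde z_0$ does not stay in $\widetilde{E}_0$, and your ``unbounded orbit trapped in a bounded set'' contradiction never fires. The paper's argument uses precisely this translate: from $\tilde f^{N}(\widetilde{E}_0)=\widetilde{E}_0+w$ with $\widetilde{E}_0$ bounded (and recurrence along a subsequence $n_k=p_kN$) one reads off $\rho(\tilde f,z_0)=w/N\in\Q^2$, contradicting the hypothesis $\rho(\tilde f,z_0)\notin\Q^2$. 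You also implicitly assume $E$ has a single connected component when you say it ``lifts to a disjoint union of copies of a single bounded component''; the paper avoids this by first passing to the component of $z_0$ and arguing it is periodic.

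\textbf{Case 2 (essential but not fully essential).} The assertion ``the only rotation vector in $\rho(\tilde f)$ orthogonal to a rational vector is $0$'' is unjustified: the lemma does not assume $0\in\rho(\tilde f)$, and a point on an irrational-slope segment can satisfy $\langle v,w\rangle=0$ for some $w\in\Z^2_*$ without being $0$ or rational. Bounding only the orbit of $\tilde z_0$ yields $\langle\rho(\tilde f,z_0),w\rangle=0$, which by itself is not a contradiction. What works---and what the paper does, and what your parenthetical is groping toward---is to invoke Propositions~1.2 and~1.4 of \cite{KoropeckiTal2012StrictlyToral} to obtain, for some power $g=f^l$ and some lift $\hat g$, a \emph{global} bound $|\langle \hat g^n(\tilde z)-\tilde z,W\rangle|<M$ valid for \emph{all} $\tilde z\in\R^2$ and $n\in\Z$. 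This forces the entire set $\rho(\hat g)$, hence $\rho(\tilde f)$, into a line of rational slope, contradicting the irrational-slope hypothesis directly. Your version, which constrains only a single rotation vector, is strictly weaker and does not close the argument.
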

\begin{proof}
Assume, for a contradiction, that $z_0$ is an inessential point. Therefore there exists $\varepsilon>0$ such that $U_{\varepsilon}=\bigcup_{i=0}^{\infty} f^{i}(\pi(B(\varepsilon, \tilde{z}_0)))$ is inessential. Note that each connected component of  $U_\varepsilon$ must be contained in a topological open disk. As $U_\varepsilon$ is $f$-invariant, $f$ permutes the connected componets of $U_\varepsilon$. Also, since $z_0$ is recurrent, if $U_\varepsilon^0$ is the connected component of $U_\varepsilon$ containing $z_0$, there must exist  a smallest $N> 0$ such that $f^{N}(U_\varepsilon^0)=U_\varepsilon^0$. Let $w\in\Z^2$ be the integer vector such that $\tilde{f}^{N}(\widetilde U_\varepsilon^0)=\widetilde U_\varepsilon^0+w$, where $\widetilde U_\varepsilon^0$ is the lift of $U_\varepsilon^0$ that contains$\tilde z_0$. As $z_0$ is recurrent, there exists a subsequence $n_k$ such that $f^{n_k}(z_0)\to z_0$. In particular, there exists  $k_0$ such that if $k>k_0$ we have $f^{n_k}(z_0)\in U_\varepsilon^0$. But by the choice of $N$ we have that $f^i(U_\varepsilon^0)\cap U_\varepsilon^0=\emptyset$ if $1\leq i<N$, one deduces that $n_k=p_k N$, for $k>k_0$. Therefore $\tilde{f}^{n_k}(\tilde z_0)=\tilde{f}^{p_kN}(\tilde z_0)\in \tilde{f}^{p_kN}(\widetilde U_\varepsilon^0)=\widetilde U_\varepsilon^0 +p_kw$. So, $\tilde{f}^{p_kN}(\tilde z_0)-p_kw\to\tilde z_0$, which implies that $\rho(\tilde f,z_0)=w/N$, a contradiction.

Assume now, again for a contradiction, that $z_0$ is essential but not fully essential. There exists $\varepsilon>0$ such that $U_{\varepsilon}=\bigcup_{i=0}^{\infty} f^{i}(\pi(B(\varepsilon, \tilde{z}_0)))$ is essential but not fully essential. Note tha, as $z_0$ is recurrent, all connected components of $U_{\varepsilon}$ are periodic, and by Proposition 1.2 and Proposition 1.4 of \cite{KoropeckiTal2012StrictlyToral}, there must exist $g=f^l$ a power of $f$, $\hat g:\R^2\to\R^2$  a lift of $g$, a vector $W\in \Z^2_*$ , and $M>0$ such that. for all $\tilde z\in\R^2$ and all $n\in\Z$, $|\langle \hat{g}^n(\tilde{z})-\tilde{z}, W\rangle|<M$. But this implies that the rotation set of $\hat g$ is contained in a segment of rational slope. Since $\rho(\hat g)= l \left(\rho(\tilde f)+V\right)$ for some $V\in\Z^2$, one deduces that $\rho(\tilde f)$ is also a line segment of rational slope, again a contradiction.
\end{proof}
\label{capfol}

\subsection{Brouwer homeomorphisms}
We recall that a \emph{Brouwer homeomorphism} is a homeomorphism of the plane preserving orientation without fixed points, and that a \emph{line} in the plane is a continuous, injective and proper map from $\R$ to $\R^2$. By Sch\"{o}enflies Theorem, if $\phi:\R\to\R^2$ is a line, then it can be extended to a homeomorphism $\phi^*:\R^2\to\R^2$ preserving orientation, such that $\phi(t)=\phi^*(t,0)$. We define canonically then the \emph{left and right} of $\phi$ as $L(\phi)=\phi^*(\R\times(0,+\infty))$ and $R(\phi)=\phi^*(\R\times(-\infty,0))$ respectively.

The fundamental result on the study of Brouwer Homeomorphisms is that:
\begin{teo}[\cite{brouwer1912beweis}]\label{teobrouwer0}
Given $h:\R^2\to\R^2$ a Brouwer homeomorphism and $x\in\R^2$, there exists a line $\phi:\R\to\R^2$, with $\phi(0)=x$, such that $h([\phi])\subset L(\phi)$ and $h^{-1}([\phi])\subset R(\phi)$, where $[\phi]=\phi(\R)$.
\end{teo}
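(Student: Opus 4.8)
The statement is Brouwer's plane translation theorem, so the plan is to follow the classical route through translation arcs. \textbf{Step 1 (free disks and translation arcs).} Since $h$ has no fixed point, a compactness argument shows that every point of $\R^2$ lies in a \emph{free} open topological disk, i.e.\ an open disk $W$ with $h(W)\cap W=\emptyset$; fix such a $W\ni x$. Inside $W\cup h(W)$ I would then build an embedded arc $\gamma$ from $x$ to $h(x)$ that is a \emph{translation arc}, meaning $\gamma\cap h(\gamma)=\{h(x)\}$: choose $\gamma$ so that $\gamma\cap\overline{W}$ is an arc from $x$ to a single point of $\partial W$ and $\gamma\cap\overline{h(W)}$ is an arc from a single point of $\partial h(W)$ to $h(x)$, and observe that the freeness of $W$ rules out the only possible self-intersections of $\gamma\cup h(\gamma)$. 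This part is routine and I would isolate it as a lemma.

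\textbf{Step 2 (Brouwer's translation lemma --- the main obstacle).} The heart of the matter is to show that for a translation arc $\gamma$ the iterates $h^k(\gamma)$, $k\in\Z$, are pairwise disjoint except that consecutive ones share exactly the common endpoint, and that $\Gamma:=\bigcup_{k\in\Z}h^k(\gamma)$ is a \emph{proper} topological line. I would argue by contradiction: if some $h^i(\gamma)$ met a non-adjacent $h^j(\gamma)$, or if $\Gamma$ failed to be proper, then passing to an \emph{innermost} intersection and using the Jordan curve theorem one extracts from finitely many iterates of $\gamma$ a simple closed curve bounding a disk $B$ which is mapped into itself by a positive iterate $h^m$ of $h$ (the orientation-preserving hypothesis on $h$ is exactly what forces it to be $h^m$, $m\geq 1$, and not $h^{-m}$, that traps $B$). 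Brouwer's fixed point theorem applied to $h^m|_B$ then yields a periodic orbit of $h$, contradicting the classical fact --- established along the way by the same innermost-disk technique --- that a fixed-point-free orientation-preserving homeomorphism of the plane has no periodic orbit. Getting the combinatorics of the innermost argument and the orientation bookkeeping right is the genuinely delicate point of the whole proof.

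\textbf{Step 3 (from $\Gamma$ to a free line through $x$).} The line $\Gamma$ is $h$-invariant, so $\Gamma\cap h(\Gamma)=\Gamma\neq\emptyset$ and $\Gamma$ is not yet the line we want. To produce a genuinely free Brouwer line $\phi$ --- one with $\phi(\R)\cap h(\phi(\R))=\emptyset$ --- through $x$, I would introduce a locally finite \emph{brick decomposition} of $\R^2$ adapted to $h$ (a CW-type decomposition into closed topological disks, each contained in a free disk) and reformulate Step 2 as the statement that there is no cyclic chain of bricks $B_0,\dots,B_n=B_0$ with $h(B_i)\cap B_{i+1}\neq\emptyset$. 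A maximality/no-return argument on the edges of the decomposition then produces a bi-infinite proper edge path $\phi$ with $h(\phi(\R))$ contained in one of the two components of $\R^2\setminus\phi(\R)$. Orienting $\phi$ so that this component is $L(\phi)$ gives $h([\phi])\subset L(\phi)$, and the same argument applied to $h^{-1}$ (equivalently, the roles of $L$ and $R$ exchange under inversion) gives $h^{-1}([\phi])\subset R(\phi)$.

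\textbf{Step 4 (base point).} Finally, since $x$ lies in a free disk, the brick decomposition can be chosen so that some edge through $x$ belongs to such a path $\phi$, and one reparametrizes so that $\phi(0)=x$. Steps 1, 3 and 4 are by now standard packaging; the only hard ingredient is the innermost-disk argument of Step 2.
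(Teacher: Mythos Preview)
The paper does not prove this theorem at all: it is stated as a classical result and attributed to Brouwer's 1912 paper via the citation \cite{brouwer1912beweis}, with no argument given. So there is nothing in the paper to compare your proposal against.

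That said, your outline is a reasonable sketch of how the result is actually established. Steps 1 and 2 follow the classical route (free disks, translation arcs, and the innermost-disk/index argument showing a fixed-point-free orientation-preserving plane homeomorphism has no periodic points and that the iterates of a translation arc concatenate to a proper line). Step 3, via brick decompositions, is the more modern packaging due to Sauzet and Le Calvez rather than Brouwer's original thickening-of-the-translation-strip argument; either works, but be aware that you are mixing two traditions. One small caution: in Step 3 you do not really need Step 2 as stated --- the brick-decomposition proof of the existence of a Brouwer line is self-contained and bypasses the explicit construction of the invariant line $\Gamma$; conversely, if you do build $\Gamma$ as in Step 2, the quickest finish is to show $h$ is conjugate to a translation on a neighborhood of $\Gamma$ and take a transversal through $x$, rather than introducing bricks. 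Either way the hard content is, as you say, the no-periodic-orbit lemma.
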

A line as in the above result is called a \textit{Brouwer line}. A direct consequence of this theorem is that, if $h$ is a Brouwer homeomorphism, then every point in $\R^2$ is contained in a open invariant connected and simply connected set, and the dynamics of $h$ in this set is conjugated to a rigid translation. In particular, $h$ only has wandering points.






\subsection{Maximal isotopies}
Let $M$ be an oriented surface, $f:M\to M$ a homeomorphisms isotopic to he identity, and denote by $\mathcal{I}$ the space of all isotopies between  $f$  and the identity, that is, if $I\in\mathcal{I}$ then $I=(f_t)_{t\in[0,1]}$, where $f_0=\textrm{Id}_M$, $f_1=f$, for every $t\in[0,1],\, f_t$ is a homeomorphism of $M$, and $I$ is a continuous curve on the space of homeomorphisms of $M$, using the topology of the uniform convergence over compact subsets. The trajectory of a point $z\in M$ for $I$ is defined as the path $t\mapsto f_t(z)$, which we denote by $I(z)$. By concatenating paths, we can define, for $n\in\N$, $$I^n(z)=\Pi_{0\leq k<n}I(f^k(z)),\,\,\, I^\N(z)=\Pi_{k\geq 0}I(f^k(z)) \,\textrm{ e }\, I^\Z(z)=\Pi_{k\in\Z}I(f^k(z)).$$ Denote $\textrm{fix}(I)=\bigcap_{t\in[0,1]}\textrm{fix}(f_t)$ to the set of points whose isotopy path is constant, and let $\textrm{dom}(I)$ be its complement, which is called the  \emph{domain} of $I$. A closed subset $F\subset\textrm{fix}(f)$ is said to be unlinked for $f$ if there exists $I\in\mathcal{I}$ such that $F\subset \textrm{fix}(I)$.

We can define a pre-order in $\mathcal{I}$ as follows:
\begin{definicao}
Let $I_1,I_2\in\mathcal{I}$. Say that $I_1\leqslant I_2$ if
\begin{enumerate}[(i)]
\item $\textrm{fix}(I_1)\subset\textrm{fix}(I_2)$;
\item $I_2$ is homotopic to $I_1$ relative to $\textrm{fix}(I_1)$.
\end{enumerate}
\end{definicao}

We say that $I\in\mathcal{I}$ is a \emph{maximal isotopy} if it is maximal for the pre-order defined above. Note that this is equivalent to the property that, for all $z\in\textrm{fix}(f)\setminus\textrm{fix}(I)$, the trajectory of $z$, which is a closed loop, is not  null homotopic in $\textrm{dom}(I)$ (see \cite{beguin2016fixed}).

Now, if $I$ is a maximal isotopy,  denoting $\tilde{I}=(\tilde f_t)_{t\in[0,1]}$ the lift of $I|_{\textrm{dom}(I)}$ to the universal covering space $\widetilde{\textrm{dom}(I)}$ of $\textrm{dom}(I)$, we have that $\tilde f_1=\tilde f$ has no fixed points and $\tilde f_0=\textrm{Id}_{\widetilde{\textrm{dom}(I)}}$, and therefore the restriction of $\tilde f_1$ to each of the connected components of its domain is a Brouwer homeomorphism. We have the following:

\begin{teo}[\cite{beguin2016fixed}]\label{teobeguin2016fixed}
For all $I\in\mathcal{I}$, there exists $I'\in\mathcal{I}$ such that $I\leqslant I'$ and $I'$ is maximal.
\end{teo}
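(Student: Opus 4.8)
The statement is due to Béguin, Crovisier and Le Roux; here is the route I would take to reprove it. The natural idea is to apply Zorn's Lemma to the pre-ordered set $\mathcal{I}_{\geqslant I}=\{J\in\mathcal{I}\mid I\leqslant J\}$, which is nonempty since $I\leqslant I$. Reflexivity and transitivity of $\leqslant$ are immediate from the definition (inclusions of fixed sets compose, and homotopies rel fixed sets can be concatenated), and a maximal element of $\mathcal{I}_{\geqslant I}$ is, by transitivity of $\leqslant$, automatically a maximal isotopy in the sense defined above. So everything reduces to the chain condition: every totally ordered $\mathcal{C}=(I_\lambda)_{\lambda\in\Lambda}\subset\mathcal{I}_{\geqslant I}$ must admit an upper bound in $\mathcal{I}_{\geqslant I}$.

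To build such an upper bound, note that $I_\lambda\leqslant I_\mu$ forces $\mathrm{fix}(I_\lambda)\subset\mathrm{fix}(I_\mu)$, so the sets $F_\lambda:=\mathrm{fix}(I_\lambda)$ form an increasing family of closed subsets of $\mathrm{fix}(f)$; I would set $F:=\overline{\bigcup_\lambda F_\lambda}\subset\mathrm{fix}(f)$ and try to produce $I_\infty\in\mathcal{I}$ with $F\subset\mathrm{fix}(I_\infty)$ and $I_\lambda\leqslant I_\infty$ for all $\lambda$. This has two ingredients. First, a topological one: $F$ must be \emph{unlinked} in a way compatible with the whole chain. Using the reformulation of $\leqslant$ on the universal cover of $\mathrm{dom}(I)$, being unlinked for the chain amounts to the lift $\tilde f$ having no fixed point on each connected component of the universal cover of $M\setminus F$; since $M\setminus F$ is essentially the increasing intersection $\bigcap_\lambda \mathrm{dom}(I_\lambda)$ with the countably many ``new'' points of $F\setminus\bigcup_\lambda F_\lambda$ removed, this is where Brouwer theory (Theorem~\ref{teobrouwer0} and its consequence that Brouwer homeomorphisms have only wandering points, applied componentwise) together with a Baire/Jordan-curve argument to handle the new points would be used. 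Second, a gluing ingredient: one must assemble from the $I_\lambda$ an actual path of homeomorphisms from the identity to $f$ with fixed set $F$, working component by component of $M\setminus F$ where $f$ lifts to a Brouwer homeomorphism, and then checking that the resulting isotopy class restricts correctly to each $\mathrm{dom}(I_\lambda)$, i.e. that $I_\lambda\leqslant I_\infty$.

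The genuinely hard part is exactly this chain step: showing that the closure $F$ of an increasing union of unlinked fixed sets is still unlinked, and that one can realize it by a single isotopy simultaneously compatible with \emph{all} the homotopy data $(I_\lambda)_{\lambda\in\Lambda}$. The union $\bigcup_\lambda F_\lambda$ need not be closed, and a priori the isotopy classes could degenerate as $\lambda$ grows; controlling this is a delicate compactness argument and is the technical core of \cite{beguin2016fixed}. Once the chain condition is secured, Zorn's Lemma yields a maximal element $I'\in\mathcal{I}_{\geqslant I}$, and the equivalent description of maximality recalled above — no non-constant trajectory loop of a point of $\mathrm{fix}(f)\setminus\mathrm{fix}(I')$ is null-homotopic in $\mathrm{dom}(I')$ — follows by unwinding the definitions.
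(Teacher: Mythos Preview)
The paper does not prove this theorem: it is stated with the citation \cite{beguin2016fixed} and used as a black box, with no argument given. There is therefore no ``paper's own proof'' to compare your proposal against.

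As for the sketch itself, the Zorn's-Lemma strategy you outline is the natural one (and is essentially Jaulent's earlier approach), and you correctly locate the only real difficulty in the chain condition. A couple of points are imprecise, though. First, there is no reason the set $F\setminus\bigcup_\lambda F_\lambda$ of ``new'' points in the closure should be countable, so the Baire/Jordan-curve argument you allude to would need a different organization. Second, and more substantially, Béguin--Crovisier--Le~Roux do \emph{not} proceed by taking the closure of the union and then checking it is unlinked; their argument is more constructive, building the maximal unlinked set directly via a careful analysis of how fixed points can be added one at a time (using equivariant Brouwer theory on the universal cover of the complement), which sidesteps the closure issue entirely. Your outline would, if completed, recover something closer to Jaulent's theorem, which already suffices for the purposes of the present paper.
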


Therefore, given an unlinked subset $F$ of fixed points for $f$, one can always find a maximal isotopy $I'$ such that $F\subset \textrm{fix}(I')$.

\subsection{Paths transverse to oriented foliations}
If $M$ is a oriented surface, we define a  \emph{oriented topological foliation with singularities of $M$} as a topological oriented foliation $\F$ defined on an open subset of $M$. This subset will be called the \emph{domain of $\F$} and denoted $\textrm{dom}(\F)$, while its complement is the set of  \emph{singularities of $\F$}, and is denoted $\textrm{sing}(\F)$.

Let us fix $\F$ an oriented singular foliation of $M$ and, for each $z \in \textrm{dom}(\F)$, denote by $\phi_z$ to the leaf of $\F$ passing through $z$. A {\it{path}} in $M$ is a continuous function $\gamma:J\to M$ where $J$ is a non-degenerate interval of the line. We denote by $[\gamma]$ the image of $\gamma$.

A path $\gamma:J\to \textrm{dom}(\F)$is said to be transverse to $\F$ if, for all $t\in J$, there exists a homeomorphism $c:W\to (0,1)^2$, where $W$ is a neighborhood of $\gamma(t)$, such that $c$ sends the restriction of the $\F$ to the foliation by downward oriented vertical leafs in $(0,1)^2$, and such that $\pi_1\circ c\circ\gamma$ is strictly increasing in a neighborhood of $t$, where $\pi_1$ is the projection onto the first coordinate. Intuitively, a path is transverse to $\F$ if it always locally crosses leafs from right to left.

\begin{definicao}\label{defequivalence}
If $M=\R^2$ and $\F$ does not have singularities, we say that two transverse paths are \textit{equivalent} if they intersect the same leafs. In the general case we say that two transverse paths $\gamma:J\to\textrm{dom}(\F)$ e $\gamma':J'\to\textrm{dom}(\F)$ are \textit{equivalent} if there exist $H:J\times[0,1]\to\textrm{dom}(\F)$ continuous, and $h:J\to J'$ an increasing homeomorphism such that:
\begin{enumerate}[(i)]
\item $H(t,0)=\gamma(t)$, $H(t,1)=\gamma'(h(t))$;
\item $\forall t\in J$ e $\forall s_1,s_2\in[0,1]$, $\phi_{H(t,s_1)}=\phi_{H(t,s_2)}$.
\end{enumerate}
In this case, we will denote $\gamma\sim_\F\gamma'$.
\end{definicao}

The previous definition is equivalent to showing that, if $\widetilde\F$ is the lift of $\F$ to the universal covering of $\textrm{dom}(\F)$, then there exists lifts $\widetilde\gamma$ and $\widetilde\gamma'$ of $\gamma$ and $\gamma'$, respectively, to the universal covering $\textrm{dom}(\F)$ such that $\widetilde{\gamma}\sim_{\widetilde{\F}}\widetilde{\gamma}'$.

We remark that, by a version of the Poincare-Bendixson Theorem, if $\F$ is a non-singular foliation of $\R^2$ and $\gamma:\R\to\R^2$ is a transverse line, then every leaf $\phi$ intersecting $\gamma$ do so at exacty one point, and the leaf $\phi$ crosses $\gamma$ from left to right, that is, if $t'\in\R$ is such that $\phi(t')\in[\gamma]$, then $\phi(t)\in L(\gamma)$, if $t<t'$, and $\phi(t)\in R(\gamma)$, if $t>t'$.

Given three lines  $\gamma_i:\R\to\R^2$, $i\in\{0,1,2\}$, we say that  \emph{$\gamma_0$ separates $\gamma_1$ and $\gamma_2$} if $[\gamma_1]$ and $[\gamma_2]$ lie in different connected components of the complement of $[\gamma_0]$. We say that \emph{$\gamma_2$  is above $\gamma_1$ with respect to $\gamma_0$} if it holds that:
\begin{enumerate}[(i)]
\item The lines are pairwise disjoint;
\item no line separates the other two;
\item if $\lambda_1$, $\lambda_2$ are two disjoint paths joining $z_1=\gamma_0(t_1)$, $z_2=\gamma_0(t_2)$ to $z'_1\in[\gamma_1]$, $z'_2\in[\gamma_2]$, respectively, and not intersecting the lines but at the extremal points, then $t_2>t_1$.
\end{enumerate}

\begin{figure}[h!t]
\centering
\includegraphics[scale = 0.7]{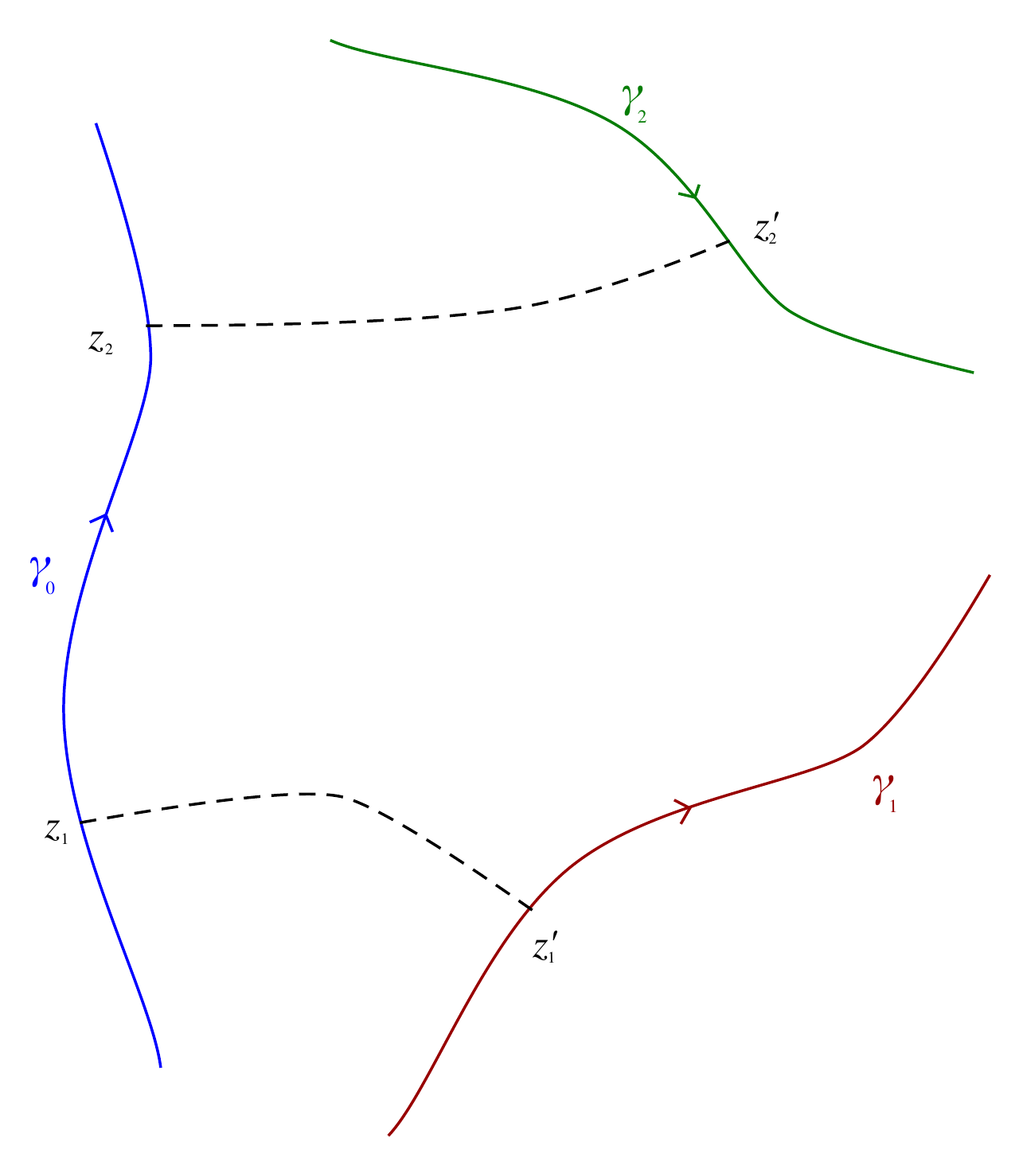}
\caption{$\gamma_2$ is above $\gamma_1$ with respect to $\gamma_0$}
\label{figacima}
\end{figure}

We need the fundamental definition of $\F$-transversal intersection.
\begin{definicao}
Let $M=\R^2$ and let $\F$ be non-singular. Given two transverse paths $\gamma_i:J_i\to\R^2$, $i\in\{1,2\}$, such that  $\phi_{\gamma_1(t_1)}=\phi_{\gamma_2(t_2)}=\phi$ for some $t_1, t_2$ in the interior of $J_1, J_2$ respectively. We say that $\gamma_1$ \emph{intersects $\gamma_2$ $\F$-transversally at $\phi$} if there exists $a_1,b_1\in J_1$ with $a_1<t_1<b_1$ and $a_2,b_2\in J_2$ with $a_2<t_2<b_2$ such that
\begin{enumerate}[(i)]
\item $\phi_{\gamma_2(a_2)}$ is below $\phi_{\gamma_1(a_1)}$ with respect to $\phi$;
\item $\phi_{\gamma_2(b_2)}$ is above $\phi_{\gamma_1(b_1)}$ with respect to $\phi$.
\end{enumerate}

In general, where $M$ is any oriented surface and $\F$ is a singular foliation, we say that two transverse paths $\gamma_i:J_i\to\mathrm{dom}(\F)$, $i\in\{1,2\}$, such that  $\phi_{\gamma_1(t_1)}=\phi_{\gamma_2(t_2)}=\phi$,  \emph{intersects $\gamma_2$ $\F$-transversally at $\phi$}, if there are lifts $\widetilde\gamma_i$ to the universal covering $\widetilde{\mathrm{dom}(\F)}$ that intersect $\widetilde\F$-transversally, where $\widetilde\F$ is the lift of $\F$. 

In both cases, we denote $\gamma_1|_{[a_1,b_1]}\pitchfork_{\F}\gamma_2|_{[a_2,b_2]}$.
\end{definicao}

Whenever the context is clear, we will just say that $\gamma_1$ and $\gamma_2$ intersect transversally. Also, if $\gamma_1(t_1)=\gamma_2(t_2)$ and $\gamma_1$ intersects $\gamma_2$ $\F$-transversally at $\phi_{\gamma_1(t_1)}=\phi_{\gamma_2(t_2)}$, we will just say that $\gamma_1$ and $\gamma_2$ intersect $\F$-transversally at $\gamma_1(t_1)$. In case a path has a transversal intersection with itself, we say that $\gamma$ has a \emph{transversal self-intersection}.

\begin{figure}[h!t]
\centering
\includegraphics[scale = 0.8]{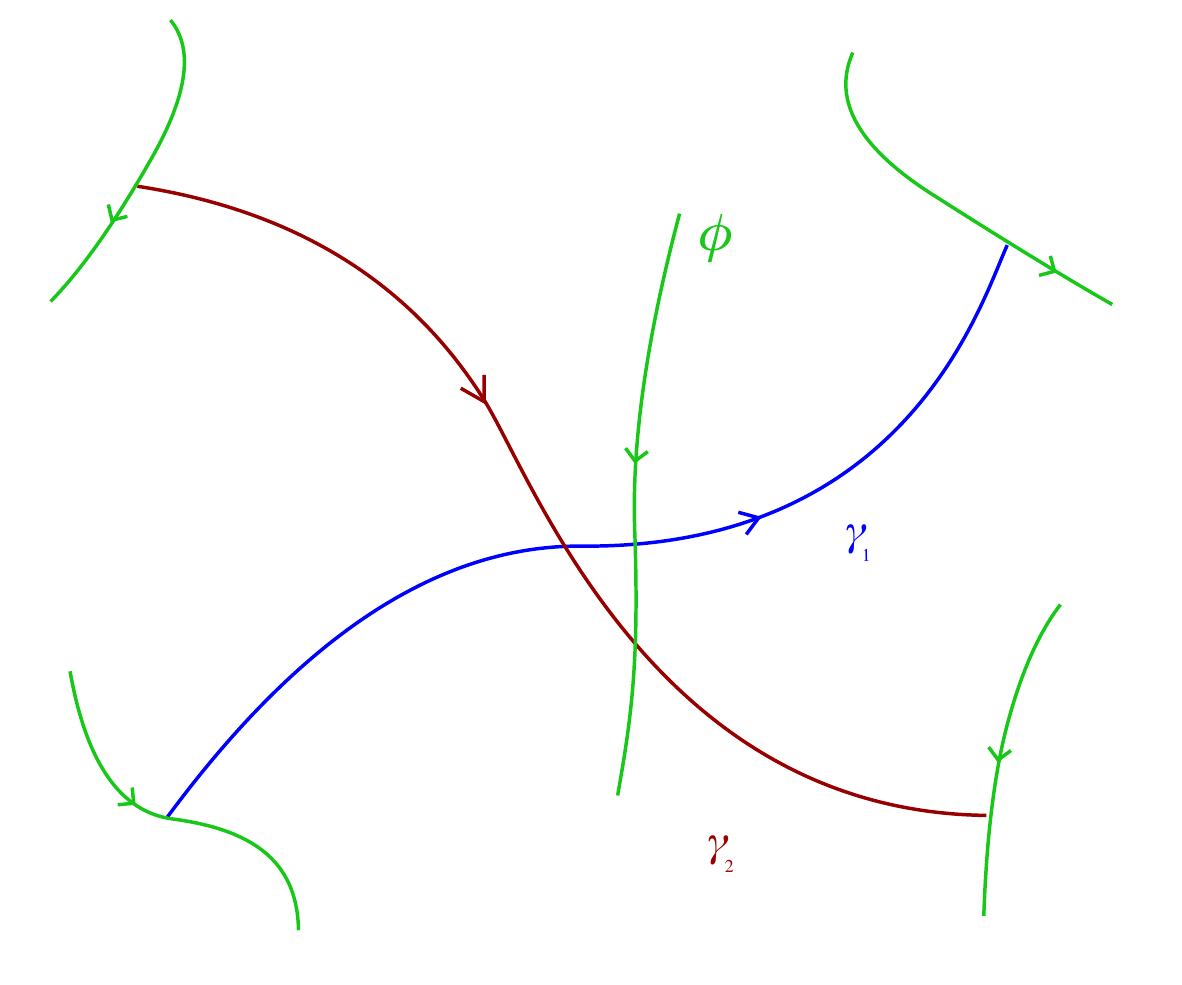}
\caption{$\gamma_1$ and $\gamma_2$ with an $\F$-transversal intersection}
\label{figinter}
\end{figure}

Note that, if  $\gamma_1|_{I_1}\pitchfork_{\F}\gamma_2|_{I_2}$ and $\gamma_2|_{I_2}\sim_{\F}\gamma_3|_{I_3}$, then $\gamma_1|_{I_1}\pitchfork_{\F}\gamma_3|_{I_3}$. Note also that, if $\gamma_1|_{I_1}\pitchfork_{\F}\gamma_2|_{I_2}$ at $\phi$ does not imply that $\gamma_1$ and $\gamma_2$ actually intersect at a point of $\phi$. However, $\gamma_1|_{I_1}$ and $\gamma_2|_{I_2}$ have at least a common point.

\subsection{Brouwer-Le Calvez foliations}

The following is one of the  most useful tools in the study of homeomorphisms of surfaces.

\begin{teo}[\cite{le2005version}]\label{teofolheacao}
Let $h:\R^2\to\R^2$ be a Brouwer homeomorphism and $G$ a discrete group of orientation preserving homeomorphisms of the plane that acts freely and properly. If $h$ commutes with the  elements of $G$, then there exists a foliation $\F$ of $\R^2$ by Brouwer lines for $h$. Furthermore, $\F$ is $G$-invariant.
\end{teo}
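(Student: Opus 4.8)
\emph{Proof strategy.} The plan is to reproduce Le Calvez's construction through \emph{brick decompositions}, carried out $G$-equivariantly throughout. Since $h$ is fixed-point-free and $G$ acts freely and properly, every point of $\R^2$ has an open neighbourhood $U$ with $h(\overline U)\cap\overline U=\emptyset$, and such neighbourhoods can be arranged into a locally finite $G$-invariant cover of $\R^2$. Subordinating a $G$-invariant locally finite triangulation to this cover and passing to a trivalent subdivision yields a $G$-invariant locally finite brick decomposition $\cD$: a closed, locally connected $1$-dimensional set $\Lambda\subset\R^2$, trivalent at its branch points, whose complementary regions (the \emph{bricks}) have closures homeomorphic to closed disks, with every brick $b$ \emph{free}, i.e.\ $h(\overline b)\cap\overline b=\emptyset$. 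From now on all choices are made on a fundamental domain of $G$ and propagated by the group, so that everything produced is automatically $G$-invariant.

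\emph{Combinatorial core.} In $\cD$, I would call a union of bricks $U$ \emph{attracting} if $h(\overline U)$ lies in the combinatorial interior of $U$ (the union of bricks whose closure is contained in $U$), and \emph{repelling} if the same holds for $h^{-1}$. Combinatorial interiors commute with intersections of brick-sets, so attracting sets are stable under intersection and every brick lies in a smallest attracting set (the whole plane being attracting). Here fixed-point-freeness enters decisively: a Brouwer homeomorphism has no nonempty compact invariant set --- it has only wandering points and is a translation on each invariant simply connected set, by the remarks following Theorem~\ref{teobrouwer0} --- so no brick can lie in a \emph{bounded} attracting set $U$, since otherwise $\bigcap_{n\geq 0}h^n(\overline U)$ would be one; likewise no brick lies in a bounded repelling set. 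From this absence of trapped regions I would extract a coherent transverse orientation of the edges of $\Lambda$: around each trivalent vertex the three incident edges realise one of the orientation patterns of a nonsingular transverse structure, and along each brick the orientation is compatible with $h$ moving that brick forward. Producing this orientation is an exhaustion argument --- Zorn's lemma applied to maximal attracting families, then to the repelling structure on the complement --- whose induction never stalls precisely because there are no trapped regions, and which can be run $G$-equivariantly thanks to local finiteness.

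\emph{From the oriented graph to the foliation, and to Brouwer lines.} Given the transversally oriented graph, I would build $\F$ by laying down, near each edge of $\Lambda$, leaves crossing that edge transversally in the prescribed direction, gluing them at each trivalent vertex to the standard local model of a nonsingular foliation at a regular point, and interpolating inside each brick; the absence of trapped regions forces the result to be genuinely nonsingular, and $G$-equivariance of all the local models makes $\F$ a $G$-invariant foliation of $\R^2$. It remains to see that each leaf $\phi$ is a Brouwer line for $h$: $\phi$ is proper, hence a line, by local finiteness of $\cD$; and $h([\phi])\subset L(\phi)$, $h^{-1}([\phi])\subset R(\phi)$ follows from the transversality bookkeeping --- every crossing of $\Lambda$ by the trajectory of a point happens in the positive transverse direction, so $h$ pushes each leaf strictly to its left, which is exactly the defining condition in Theorem~\ref{teobrouwer0}. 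The main obstacle is the combinatorial core: getting the absence of trapped regions to interact correctly with an arbitrary finite chain of bricks, and then upgrading it to a globally consistent $G$-equivariant transverse orientation and an honest $C^0$ foliation with no singular leaf; threading $G$-equivariance through the Zorn's-lemma steps --- so that the maximal attracting family, the local models and the interpolations can all be chosen invariantly --- is the delicate part, which is why one keeps the brick decomposition locally finite, reducing every choice to finitely many choices modulo $G$.
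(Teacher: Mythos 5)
This statement is not proved in the paper at all: it is quoted as an external result of Le Calvez \cite{le2005version}, whose original proof occupies a long article. Your proposal does follow the same general strategy as that source (equivariant free brick decompositions), but as written it has a genuine gap at what you yourself call the combinatorial core. The coherent transverse orientation of the skeleton does not come from the non-existence of bounded attracting unions of bricks. In Le Calvez's argument the orientation comes from taking the free brick decomposition \emph{maximal} among free decompositions: maximality forces, for any two adjacent bricks $b,b'$, that $h(\overline b)\cap \overline{b'}\neq\emptyset$ or $h(\overline{b'})\cap\overline b\neq\emptyset$, and the fact that this defines a consistent orientation (equivalently, that the induced relation on bricks is a strict partial order, with no closed chains $b_0\to b_1\to\cdots\to b_0$) rests on Franks' lemma on periodic chains of free disks for Brouwer homeomorphisms, not on the absence of compact attractors. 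A finite cycle of bricks is not an attracting set in your sense, so your ``no trapped regions'' argument, while correct as far as it goes, does not exclude it; the appeal to ``Zorn's lemma applied to maximal attracting families'' in place of this step is where the proof is missing rather than merely compressed.

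The final step is also asserted rather than proved. Laying down leaves transverse to each edge and ``interpolating inside each brick'' essentially presupposes the conclusion: in the actual proof the Brouwer lines arise as boundaries of attractors built from the brick order, this first yields only a closed family of pairwise disjoint Brouwer lines, and upgrading that family to a genuine nonsingular foliation of the whole plane (equivariantly, and with every leaf still a Brouwer line) is a substantial further argument, not a local gluing of models at trivalent vertices. The equivariance bookkeeping via a fundamental domain is also delicate when $\R^2/G$ is non-compact; the standard route is to work on the quotient surface and lift. So the proposal identifies the right framework but does not contain the two ideas that make the theorem true: maximality of the free decomposition combined with Franks' free-disk-chain lemma to orient the skeleton, and the lamination-to-foliation construction. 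Given that the paper itself treats this as a black box, citing \cite{le2005version} is the appropriate course; reproducing the proof would require supplying precisely those missing components.
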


A foliation as in the previous theorem will be called a \emph{Brouwer-Le Calvez foliation}.

Note that, by combining the previous result with Theorem \ref{teobeguin2016fixed}, one can obtain that, given a homeomorphism $f$ of $M$ isotopic to identity and an unlinked subset $F$ of $\textrm{fix}(f)$, one can always find a maximal isotopy $I$ such that $F\subset \textrm{fix}(I)$. Since the lift of the restriction of $f$ to $\textrm{dom}(I)$ is a Brouwer homeomorphism commuting with all covering automorphisms, Theorem \ref{teofolheacao} tell us that one may find a foliation $\widetilde\F$ by Brouwer lines that descends to an oriented nonsingular foliation of $\textrm{dom}(I)$. It is therefore possible to define a singular foliation $\F$ of $M$ such that $\textrm{dom}(\F)=\textrm{dom}(I)$, and such that $F$ is contained in $\textrm{sing}(\F)$. 
Furthermore, one can show (\cite{le2005version}) that:
\begin{teo}\label{foltal}
Given a maximal isotopy $I$, there exists $\F$ a oriented topological foliation with singularities of $M$, with $\emph{dom}(\F)=\emph{dom}(I)$, such that for all $z\in\emph{dom}(I)$ the trajectory $I(z)$ is homotopic with fixed endpoints in $\emph{dom}(I)$ to a path transverse to $\F$, and this path is unique up to equivalence. 
\end{teo}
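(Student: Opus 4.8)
\emph{Proof idea.} The statement is Le Calvez's foliated equivariant translation theorem \cite{le2005version}; the plan is to pass to a universal cover, apply Theorem~\ref{teofolheacao} to build the foliation, and then straighten the isotopy trajectories. Since the construction can be carried out on one connected component of $\mathrm{dom}(I)$ at a time, assume $\mathrm{dom}(I)$ is connected and let $\pi_I\colon\widetilde{\mathrm{dom}(I)}\to\mathrm{dom}(I)$ be its universal covering (so $\widetilde{\mathrm{dom}(I)}$ is homeomorphic to $\R^2$), with deck transformation group $G$, which acts freely and properly discontinuously and preserves orientation. As recalled above, the lift $\widetilde I=(\tilde f_t)_{t\in[0,1]}$ of $I|_{\mathrm{dom}(I)}$ has $\tilde f_0=\mathrm{Id}$ and $\tilde f_1=:\tilde f$ a fixed-point-free orientation-preserving homeomorphism commuting with every element of $G$; that is, $\tilde f$ is a Brouwer homeomorphism commuting with the discrete group $G$. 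By Theorem~\ref{teofolheacao} there is then a $G$-invariant foliation $\widetilde\F$ of $\widetilde{\mathrm{dom}(I)}\cong\R^2$ all of whose leaves are Brouwer lines for $\tilde f$.

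Being $G$-invariant and oriented, $\widetilde\F$ descends to an oriented nonsingular topological foliation of $\mathrm{dom}(I)$; extending it over $M$ by declaring each point of the closed set $M\setminus\mathrm{dom}(I)$ to be a singularity yields an oriented singular foliation $\F$ of $M$ with $\mathrm{dom}(\F)=\mathrm{dom}(I)$. The crux is to prove that, for each $z\in\mathrm{dom}(I)$, the trajectory $I(z)$ is homotopic with fixed endpoints, inside $\mathrm{dom}(I)$, to a path transverse to $\F$. Fix a lift $\tilde z$ and set $\tilde z_1:=\tilde f(\tilde z)$, so that $\widetilde I(\tilde z)$ runs from $\tilde z$ to $\tilde z_1$. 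Since $\widetilde{\mathrm{dom}(I)}$ is simply connected, \emph{every} path from $\tilde z$ to $\tilde z_1$ is homotopic to $\widetilde I(\tilde z)$ with fixed endpoints, so it suffices to exhibit one transverse arc $\tilde\gamma$ from $\tilde z$ to $\tilde z_1$ and then take $\gamma:=\pi_I\circ\tilde\gamma$. Such an arc is produced by interpolating between $\tilde z$ and $\tilde z_1$ across the leaves in the direction prescribed by the orientation; the point is that $\tilde f$ pushes every leaf strictly to its left ($\tilde f([\phi])\subset L(\phi)$ and $[\phi]\cap\tilde f([\phi])=\emptyset$ for each leaf $\phi$), so a Poincaré--Bendixson analysis of how the $\tilde f$-orbit of $\tilde z$ meets the leaves prevents the interpolation from ever having to cross a leaf backwards. (If the foliation furnished by Theorem~\ref{teofolheacao} is not already adapted to the isotopy $\widetilde I$, one first replaces it by a Brouwer-line foliation for $\tilde f$ that is, again via the equivariant machinery.)

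Uniqueness up to equivalence is checked on the universal cover: if $\tilde\gamma,\tilde\gamma'$ are transverse arcs in $\widetilde{\mathrm{dom}(I)}$ with the same two endpoints, then the Poincaré--Bendixson property of transverse paths in the nonsingular planar foliation $\widetilde\F$ --- each leaf meets a transverse path at most once, crossing it from left to right --- forces $\tilde\gamma$ and $\tilde\gamma'$ to intersect exactly the same leaves, i.e. $\tilde\gamma\sim_{\widetilde\F}\tilde\gamma'$; by the description of equivalence recorded after Definition~\ref{defequivalence}, this is the same as $\pi_I\circ\tilde\gamma\sim_\F\pi_I\circ\tilde\gamma'$. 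Since every transverse representative of $I(z)$ lifts to a transverse arc from $\tilde z$ to $\tilde z_1$, all such representatives are $\sim_\F$-equivalent.

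The genuine difficulty is the straightening step --- producing the transverse arc from $\tilde z$ to $\tilde f(\tilde z)$ with the correct behaviour near the singular set --- which is the heart of \cite{le2005version}. Maximality of $I$ is indispensable here: by the characterization of maximal isotopies recalled above, every point of $\mathrm{fix}(f)\cap\mathrm{dom}(I)$ has a trajectory loop that is \emph{not} null-homotopic in $\mathrm{dom}(I)$, which is precisely what permits that loop to be realized by a transverse loop --- a null-homotopic transverse loop cannot exist, since lifting it would give a transverse path returning to its starting point, contradicting Poincaré--Bendixson. Dropping maximality makes the conclusion fail, so the hypothesis cannot be weakened.
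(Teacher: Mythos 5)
Your proposal follows essentially the same route as the paper: the paper does not prove this statement itself, but introduces it exactly as you do --- pass to the universal cover of $\mathrm{dom}(I)$, where maximality makes the lifted time-one map a fixed-point-free (Brouwer) homeomorphism commuting with the deck transformations, apply Theorem~\ref{teofolheacao}, descend and extend by singularities --- and then quotes the transverse-trajectory property from \cite{le2005version}. Your reduction of existence to producing a single transverse arc from $\tilde z$ to $\tilde f(\tilde z)$ in the simply connected cover, and your uniqueness argument (a leaf crossed by one such arc separates the plane, so any arc with the same endpoints crosses it as well), are sound, and like the paper you defer the one genuinely hard step --- the construction of that transverse arc from the Brouwer-line foliation --- to the same reference \cite{le2005version}.
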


We denote by $I_\F(z)$ to the class of paths described in the previous theorem, as well as to its representatives whenever the context is clear.  We further denote $$I^n _\F(z)=\Pi_{0\leq k<n}I_\F(f^k(z)), \,\,\,I^\N_\F(z)=\Pi_{k\geq 0}I_\F(f^k(z)) \,\textrm{ e }\,I^\Z_\F(z)=\Pi_{k\in\Z}I_\F(f^k(z)).$$

\begin{definicao}
A transversal path $\gamma:[a,b]\to\textrm{dom(I)}$ is called \textit{admissible of order $n$} if there exists $z\in\textrm{dom}(I)$ such that $\gamma$ is equivalent to a path in $I_\F^n(z)$. A path that is admissible of some order is just called admissible.
\end{definicao}

Note that Proposition 19 of \cite{LeCalvezTal2015ForcingTheory} has as a direct consequence that:

\begin{lema}\label{lemasubcaminhosadmissiveis}
Let $\beta, \gamma:[a,b]\to\textrm{dom(I)}$ be paths transversal to $\F$, such that $\beta\pitchfork_{\F}\gamma$. If $\gamma$ is admissible of order $n$ and $I\subset[a,b]$ is a nondegenerate interval, then $\gamma\mid_{I}$ is also admissible of order $n$.
\end{lema}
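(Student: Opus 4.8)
The plan is to obtain this directly from Proposition~19 of \cite{LeCalvezTal2015ForcingTheory}, which is the statement controlling how $\F$-transverse intersections interact with admissibility. So the first step is simply to recall its precise formulation, fix notation for the leaf $\phi$ at which $\beta$ and $\gamma$ meet $\F$-transversally and for the corresponding splitting parameters on $[a,b]$, and observe that the data we are handed --- $\beta\pitchfork_\F\gamma$ together with $\gamma$ admissible of order $n$ --- is exactly the input that proposition requires; the hypothesis $\beta\pitchfork_\F\gamma$ is there precisely to supply the transverse intersection, and it plays no further role in the conclusion.

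The second step is to read off the conclusion. The substantive content of Proposition~19 is that admissibility \emph{of the same order} survives the operations it performs on transverse paths around an $\F$-transverse crossing; in particular it yields that a nondegenerate subpath $\gamma|_I$ of $\gamma$ is again admissible of order $n$ --- not of some larger order. What is then left is routine bookkeeping with equivalence classes: after lifting everything to the universal covering $\widetilde{\mathrm{dom}(\F)}$ and using the rigidity of transverse trajectories recalled above (a leaf meeting a transverse line does so once, crossing it from left to right), one checks that the transverse path produced is $\sim_\F$-equivalent to $\gamma|_I$ in the sense of Definition~\ref{defequivalence}, and that $\F$-transversality is invariant under this equivalence so that the proposition may legitimately be invoked.

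The only genuinely delicate point --- and hence the expected main obstacle --- is invoking Proposition~19 with the correct splitting data so that the order is preserved rather than increased; this is exactly the part where one must unwind the ``above/below with respect to $\phi$'' relations in the definition of $\F$-transversal intersection. Since the result is flagged in \cite{LeCalvezTal2015ForcingTheory} as a direct consequence of Proposition~19, I expect this verification to be short once the statement of that proposition is at hand, with the remainder of the argument being formal.
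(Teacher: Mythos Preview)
Your proposal is correct and follows exactly the paper's approach: the paper does not give any argument beyond stating that the lemma is a direct consequence of Proposition~19 of \cite{LeCalvezTal2015ForcingTheory}, and that is precisely what you invoke as well.
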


\subsection{Forcing}

Let us present now some of the results from the forcing theory developed by Le Calvez and the second author. As before we assume fixed a homeomorphism $f$ of the surface $M$, $I$ a maximal isotopy joining the identity and $f$, and $\F$ a Brouwer-Le Calvez foliation for $I$. The fundamental lemma is the following:

\begin{proposicao}[\cite{LeCalvezTal2015ForcingTheory}]
\label{propadm}
Suppose $\gamma_i:[a_i,b_i]\to M$, $i=1,2$ are two transverse paths that intersect $\F$-transversally at $\gamma_1(t_1)=\gamma_2(t_2)$. If $\gamma_1$ is admissible of order $n_1$ and $\gamma_2$ is admissible of order $n_2$, then both the transverse paths $\gamma_1|_{[a_1,t_1]}\gamma_2|_{[t_2,b_2]}$ and $\gamma_2|_{[a_2,t_2]}\gamma_1|_{[t_1,b_1]}$ are admissible of order $n_1+n_2$. Furthermore, either one of these paths is admissible of order $\min(n_1,n_2)$ or both are admissible of order $\max(n_1,n_2)$. 
\end{proposicao}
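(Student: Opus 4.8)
The plan is to reduce the statement to a purely two‑dimensional assertion about a non‑singular foliation of the plane and then prove that assertion by an inductive surgery on $n_1+n_2$, the Brouwer structure of the leaves supplying the base case. First I would lift $f|_{\textrm{dom}(I)}$ and the foliation of Theorem \ref{foltal} to the universal cover of $\textrm{dom}(I)$, which is homeomorphic to $\R^2$ and on which $\widetilde\F$ becomes a non‑singular foliation all of whose leaves are Brouwer lines for the lifted Brouwer homeomorphism $h$. By the very definitions of $\F$‑transverse intersection and of the equivalence $\sim_\F$, one can pick lifts $\widetilde\gamma_1,\widetilde\gamma_2$ of $\gamma_1,\gamma_2$ intersecting $\widetilde\F$‑transversally; a point realizing a swapped concatenation upstairs projects to one downstairs, so it suffices to work entirely with $h$, $\widetilde\F$ and $\R^2$, where ``admissible of order $n$'' means: equivalent (for $\sim_{\widetilde\F}$) to a subpath of the transverse trajectory $I^n_{\widetilde\F}(\widetilde z)$ of some point $\widetilde z$.

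\emph{The single‑crossing lemma.} The heart of the matter is the following local statement, which I would isolate and prove by induction on $m_1+m_2$: if $\alpha_1$ is admissible of order $m_1$, $\alpha_2$ is admissible of order $m_2$, and $\alpha_1\pitchfork_{\F}\alpha_2$ at a leaf $\phi$ with $\alpha_1(s_1)=\alpha_2(s_2)\in\phi$, then the two swapped concatenations are admissible of order $m_1+m_2$, and moreover one of them is admissible of order $\min(m_1,m_2)$ or both of order $\max(m_1,m_2)$. The base case $m_1=m_2=1$ is where Brouwer theory does the work: the trajectories $I_\F(z_1)$ and $I_\F(z_2)$ cross $\widetilde\F$‑transversally at $\phi$, and using that every leaf is a Brouwer line (so $h$ carries the relevant leaf‑bounded region into a disjoint translate, and suitable such regions are free topological disks on which $h$ acts like a translation, cf. the proof of Theorem \ref{teobrouwer0}), one builds a ``chain of free disks'' that produces a point $w$ whose first isotopy segment shadows $z_1$'s trajectory up to the leaf $\phi$ and whose second shadows $z_2$'s trajectory onward from $\phi$; the Poincaré–Bendixson‑type control recalled above (a leaf crosses a transverse line at most once, and from left to right) is what makes the ``shadowing'' well defined.

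\emph{Inductive step and the $\min/\max$ dichotomy.} For the inductive step I would cut the longer of the two paths, say $\alpha_2=\alpha_2'\alpha_2''$, at an integer isotopy time, determine on which of $\alpha_2'$, $\alpha_2''$ the crossing leaf $\phi$ is met, apply the inductive hypothesis to $\alpha_1$ against that shorter piece, and then re‑concatenate with the untouched piece; the transverse‑intersection relation is preserved under these operations (and one may invoke Lemma \ref{lemasubcaminhosadmissiveis} to keep sub‑paths admissible of the right order). Careful bookkeeping of how many full isotopy steps of the realizing orbits of $z_1$ and $z_2$ are actually consumed by the surgery both pins the order of each swapped path to exactly $m_1+m_2$ and yields the refined estimate: if the crossing leaf is met sufficiently early along the longer trajectory, one swapped path can be realized re‑using only the shorter realizing orbit, which gives the $\min$ case; otherwise a counting argument shows each of the two swapped paths must consume precisely $\max(m_1,m_2)$ steps. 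Applying the lemma with $\alpha_i=\gamma_i$ and $m_i=n_i$ and projecting back down through the covering then gives the proposition.

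The main obstacle is exactly the construction of the realizing point in the base case (and hence, via the induction, in general): converting the topological hypothesis that $\gamma_1$ and $\gamma_2$ cross $\F$‑transversally at $\phi$ into an honest orbit segment of $f$ with a prescribed transverse trajectory is what forces one to deploy the full Brouwer‑line apparatus — the ordering and disjointness of translated leaves, the freeness of leaf‑bounded disks, and the translation structure of $h$ on invariant simply connected sets. A secondary, purely organizational difficulty is keeping the passage between $M$, $\textrm{dom}(I)$, its universal cover and $\R^2$, together with the equivalence relation $\sim_\F$, coherent enough that ``admissible of order $n$'' upstairs and downstairs genuinely correspond.
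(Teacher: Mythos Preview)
The paper does not prove Proposition~\ref{propadm}: it is stated with the attribution \cite{LeCalvezTal2015ForcingTheory} and quoted as a black box from the forcing theory developed there, so there is no proof in the present paper against which to compare your proposal. Your sketch is broadly in the spirit of the argument in the cited reference (lift to the universal cover of $\textrm{dom}(I)$, where the foliation is non-singular and leaves are Brouwer lines, and exploit the Brouwer structure to produce realizing orbits), but the present paper simply imports the result and does not reproduce any of that.
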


\begin{definicao}\label{defdirigido}
Let $\gamma:\R\to\R^2$ be a proper path and let  $\rho\in\R^2$, with $||\rho||=1$. We say that $\gamma$ é \emph{directed by $\rho$} if $$\lim_{t\to\pm\infty}||\gamma(t)||=+\infty,\quad\lim_{t\to+\infty}\gamma(t)/||\gamma(t)||=\rho,\quad\lim_{t\to-\infty}\gamma(t)/||\gamma(t)||=-\rho .$$
\end{definicao}

We note that whenever $f:\T\to\T$ is a homeomorphism homotopic to the identity and $z\in\T$ is a point whose rotation vector is well defined and equal to $\rho\not=0$, then we can find $\gamma$ a representative of $I^\Z_\F(z)$, such that every lift of $\gamma$ to  $\R^2$ (the universal covering of $\T$) is a path directed by $\rho/||\rho||$.

Assume $M$ is $\R^2$,  and let $\gamma:\R\to\R^2$ be a transverse line. Denote, as before, the two connected components of its complement by $R(\gamma)$ and $L(\gamma)$. We define the \emph{foliated right} (resp. \emph{foliated left} of $\gamma$,  denoted as $r(\gamma)$ (resp. $l(\gamma)$) as the set of leafs and singularities of $\F$ stictly contained in $R(\gamma)$ ( resp. $L(\gamma)$). Thus $l(\gamma)\cup r(\gamma)$ contains all leafs and singularities not intersected by $\gamma$. The following is a useful criterium to detect transversal intersections:

\begin{proposicao}
\label{proprl}
Let $\F$ be an oriented topological foliation with singularities of $\R^2$,  $\gamma:\R\to\R^2$ be a transverse path where $\gamma$  is a line , and let $\gamma':[a,b]\to\R^2$ be a transverse path. If $[\gamma']\cap l(\gamma)\neq\emptyset$, $[\gamma']\cap r(\gamma)\neq\emptyset$ and if $J^*=\{t\in\R\mid\phi_{\gamma(t)}\textrm{ crosses }\gamma'\}$ is bounded, then there exist intervals $J$ and $J'$ such that $\gamma'|_{J'}\pitchfork_{\F}\gamma|_J$.
\end{proposicao}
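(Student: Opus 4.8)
The plan is to analyze the geometric configuration forced by the hypotheses and extract an $\F$-transversal intersection from the fact that $\gamma'$ meets both foliated sides of the line $\gamma$. First I would set up notation: since $\gamma$ is a transverse line in $\R^2$ with a nonsingular foliation (we work in the universal covering, or directly in $\R^2$ since $\F$ has no singularities here by hypothesis — strictly speaking $\F$ may be singular, but the proof reduces to the nonsingular planar case as in the definition of $\pitchfork_\F$), the Poincaré--Bendixson-type remark recalled earlier tells us every leaf meeting $[\gamma]$ does so at a single point and crosses from $L(\gamma)$ to $R(\gamma)$ as the leaf parameter increases. Thus $[\gamma]$ together with the leaves it meets foliate a ``strip''; the complement consists of leaves and singularities lying entirely in $L(\gamma)$ (these form $l(\gamma)$) or entirely in $R(\gamma)$ (these form $r(\gamma)$).

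Next I would use the hypothesis that $J^* = \{t : \phi_{\gamma(t)} \text{ crosses } \gamma'\}$ is bounded, say $J^* \subset [t_-, t_+]$. The idea is that $\gamma'$ is a compact transverse path which, while it may cross leaves that $\gamma$ crosses, can only do so for leaves $\phi_{\gamma(t)}$ with $t$ in this bounded set. Consider the pieces of $[\gamma']$ lying in $l(\gamma)$ and in $r(\gamma)$ — both nonempty by hypothesis. Since $\gamma'$ is connected and continuous, as it travels from a point of $[\gamma'] \cap l(\gamma)$ to a point of $[\gamma'] \cap r(\gamma)$ it must pass through the region foliated by leaves meeting $\gamma$ (the only leaves separating $l(\gamma)$ from $r(\gamma)$ are exactly the ones $\gamma$ crosses). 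Here the key quantitative point is to compare, along this passage, the ``height'' at which $\gamma'$ crosses these leaves with the height at which $\gamma$ crosses them, using the ``above/below with respect to $\phi$'' relation: as $\gamma'$ moves from the $l(\gamma)$ side to the $r(\gamma)$ side it must go from being above $\gamma$ (relative to the relevant leaves) to being below it, or vice versa. This change of side, occurring across leaves indexed by the bounded set $J^*$, is precisely the condition (i)--(ii) in the definition of $\F$-transversal intersection: one picks $a', b'$ in the parameter interval of $\gamma'$ before and after the passage, and corresponding $a, b$ on $\gamma$, so that $\phi_{\gamma'(a')}$ is below $\phi_{\gamma(a)}$ w.r.t.\ some intermediate leaf $\phi$ and $\phi_{\gamma'(b')}$ is above $\phi_{\gamma(b)}$ (or the reverse), giving $\gamma'|_{J'} \pitchfork_\F \gamma|_J$.

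I expect the main obstacle to be making the ``passage through the crossed-leaf region forces a switch from above to below'' argument rigorous: one must carefully track the three-lines ``above/with respect to'' relation, verify that no line separates the other two (using that leaves crossing $\gamma$ are linearly ordered by $\gamma$'s parameter and that $\gamma'$ cannot be separated from $[\gamma]$ once it meets both sides), and handle the possibility that $\gamma'$ oscillates across the strip many times — the boundedness of $J^*$ is what prevents pathology, since it bounds which leaves of $\gamma$ are in play and hence makes the comparison meaningful. A secondary subtlety is the reduction from the (possibly) singular foliation on a general surface to the nonsingular planar statement: this is handled, as in the definition of $\pitchfork_\F$, by lifting to $\widetilde{\mathrm{dom}(\F)}$, but one should note that the statement as phrased already assumes $\F$ is a foliation of $\R^2$, so no lift is needed. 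Finally, I would remark that once the switch of sides is identified, choosing the intervals $J, J'$ and the intermediate leaf $\phi$ is routine, and the conclusion follows directly from unwinding Definition of $\F$-transversal intersection.
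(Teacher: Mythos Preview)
Your overall strategy matches the paper's, but your sketch leaves unexecuted precisely the step you flag as the main obstacle, and the paper's proof resolves it by a concrete reduction you have not identified. The paper does not try to track an ``above/below'' height function along a possibly oscillating passage. Instead, it uses that $l(\gamma)$ and $r(\gamma)$ are \emph{closed} to extract a \emph{minimal} subpath: with $\gamma'(t_0)\in l(\gamma)$ and $\gamma'(s_0)\in r(\gamma)$ (say $t_0<s_0$), set $t_1=\max\{t\in[t_0,s_0):\gamma'(t)\in l(\gamma)\}$ and $s_1=\min\{t\in(t_1,s_0]:\gamma'(t)\in r(\gamma)\}$. On the open interval $(t_1,s_1)$ the path $\gamma'$ lies on leaves that are neither in $l(\gamma)$ nor in $r(\gamma)$, hence each such leaf crosses $\gamma$; therefore $\gamma'|_{(t_1,s_1)}$ is \emph{equivalent} to $\gamma|_{(a_0,b_0)}$ for some $a_0,b_0$, which are finite exactly because of the boundedness hypothesis on $J^*$. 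With $J'=[t_1,s_1]$ and $J=[a_0,b_0]$ one now has an actual intersection point $\gamma'(t')=\gamma(s)$, and lifting to $\widetilde{\mathrm{dom}(\F)}$ the transversality is immediate: the endpoint leaf $\phi_{\tilde\gamma'(t_1)}$ lies in $l(\gamma)$ while $\phi_{\tilde\gamma(a_0)}$ crosses $\gamma$, and symmetrically at the other end, which is precisely the above/below configuration required.

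The point to take away is that the ``switch of sides'' you are looking for is not established by a continuity argument on some height comparison; it comes for free once you observe that, on the minimal subpath, $\gamma'$ and $\gamma$ cross \emph{the same} leaves (so they are equivalent there), while the endpoints of $\gamma'$ sit on leaves that $\gamma$ does not cross at all, one on each foliated side. Your sketch gestures at this but does not isolate the equivalence $\gamma'|_{(t_1,s_1)}\sim_\F\gamma|_{(a_0,b_0)}$, which is the actual engine of the argument.
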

\begin{proof}
Let $t_0, s_0$ in $[a,b]$ be such that $\gamma'(t_0)\in l(\gamma)$ and $\gamma'(s_0)\in r(\gamma)$, and we assume, without loss of generality, that $t_0<s_0$. Since $l(\gamma)$ and $r(\gamma)$ are closed, let $t_1=\max_{t_0\le t <s_0}\{t\mid\gamma'(t)\in l(\gamma)\}$ and let $s_1= \min_{t_1<t\le s_0}\{t\mid\gamma'(t)\in r(\gamma)\}$. Let $J'=[t_1,s_1]$, and note that  $[\gamma'|_{J'}]$ must intersect $\gamma$, so let $t'$ be such that $\gamma'(t')\in [\gamma]$. Also, for $t_1<t<s_1$, $\gamma'(t)$ is in a leaf that crosses $\gamma$ and so there exists $a_0, b_0$ such that $\gamma'|_{(t_1, s_1)}$ is equivalent to $\gamma|_{(a_0,b_0)}$, where $a_0$ and $b_0$ are finite since both are contained in $J^*$. Set $J=[a_0, b_0]$ and let $s$ be such that $\gamma(s)=\gamma'(t')$. Note that, by lifting $\gamma'|_{J'}$ and $\gamma|_J$ to paths $\tilde\gamma'$ and $\tilde\gamma$ in the universal covering of $\emph{dom}(\F)$ such that $\tilde\gamma(s)=\tilde\gamma'(t')$,  and lifting $\F$ to $\tilde\F$, one has that the leaf $\phi_0=\phi_{\tilde\gamma'(t')}$ is such that $\tilde\gamma(a_0)$ is above $\tilde\gamma'(t_1)$ and $\tilde\gamma(b_0)$ is below $\tilde\gamma'(s_1)$. Thefore $\tilde\gamma'\pitchfork_{\tilde\F}\tilde\gamma$ and so $\gamma'|_{J'}\pitchfork_{\F}\gamma|_J$.
\end{proof}

The next result shows that admissible paths obey some sort of continuity:

\begin{lema}[\cite{LeCalvezTal2015ForcingTheory}]
\label{lemaestabilidade}
Fix $z\in\emph{dom}(I)$, $n\geq 1$, and let us parametrize $I^n_{\F}(z)$ by $[0,1]$. For each $0<a<b<1$, there exists a neighborhood $V$ of $z$ such that, for all $z'\in V$,  $I^n_{\F}(z)|_{[a,b]}$ is equivalent to a subpath of $I^n_{\F}(z')$. Furthermore, There exists $W$ a neighborhood of $z$ such that, for all $z',z''\in W$, the path $I^n_{\F}(z')$ is equivalent to a subpath of $I^{n+2}_{\F}(f^{-1}(z''))$.
\end{lema}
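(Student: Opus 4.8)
The plan is to read this as a stability statement for the transverse representatives $I^n_\F(\cdot)$ and derive it from two ingredients: the continuity of the isotopy in the base point, and the local product structure of $\F$ together with the uniqueness clause of Theorem~\ref{foltal}. Since $(t,w)\mapsto f_t(w)$ is continuous on $[0,1]\times M$ and $f$ is a homeomorphism, $z'\to z$ implies that the trajectories $I^n(z')=\Pi_{0\le k<n}I(f^k(z'))$ converge uniformly to $I^n(z)$, and likewise $I^{n+2}(f^{-1}(z'))\to I^{n+2}(f^{-1}(z))$. The role of the hypothesis $0<a<b<1$, and of replacing $I^n_\F(z')$ by the longer path $I^{n+2}_\F(f^{-1}(z''))$, is precisely to confine every comparison to a compact \emph{interior} piece of the relevant transverse path, kept at positive distance in $\mathrm{dom}(\F)$ from the places where a small move of the base point could change the crossing pattern.

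For the first assertion I would write $\gamma=I^n_\F(z)$ and $K=\gamma([a,b])$, compact in $\mathrm{dom}(\F)$, and cover $K$ by finitely many foliated boxes $W_1,\dots,W_m$ (in each of which $\F$ is conjugate to the downward vertical foliation of $(0,1)^2$), arranged so that $\gamma|_{[a,b]}$ runs through them in a fixed order, crossing in each $W_j$ a definite interval of leaves, and so that $\gamma(a)$ and $\gamma(b)$ lie deep in the interiors of $W_1$ and $W_m$. For $z'$ close to $z$, the trajectory $I^n(z')$ is uniformly close to $I^n(z)$, which is homotopic rel endpoints in $\mathrm{dom}(\F)$ to $\gamma$ by Theorem~\ref{foltal}; splicing that homotopy with a short arc in $\mathrm{dom}(\F)$ absorbing the endpoint displacements $z'\!\to\!z$ and $f^n(z')\!\to\!f^n(z)$ (possible since $z\in\mathrm{dom}(\F)$ and all the relevant sets are compact in $\mathrm{dom}(\F)$), one sees that $I^n(z')$ is homotopic, rel its own endpoints, to a transverse path running through the same box sequence $W_1,\dots,W_m$ and crossing in each $W_j$ the same interval of leaves as $\gamma|_{[a,b]}$ --- the deep-interior position of $\gamma(a),\gamma(b)$ being what prevents the $O(\|z'-z\|)$ wobble at the ends from disturbing this. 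By the uniqueness clause of Theorem~\ref{foltal}, $I^n_\F(z')$ then contains a subpath $\sim_\F$-equivalent to $\gamma|_{[a,b]}$; intersecting the finitely many constraints on $z'$ produces the neighbourhood $V$.

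For the second assertion, recall $I^{n+2}_\F(f^{-1}(w))=I_\F(f^{-1}(w))\cdot I^n_\F(w)\cdot I_\F(f^n(w))$, so $I^n_\F(w)$ is a central subpath of $I^{n+2}_\F(f^{-1}(w))$ flanked by a full trajectory piece on each side. Applying the first assertion to the base point $f^{-1}(z)$ with order $n+2$ and a parameter window containing the whole central block $I^n_\F(z)$ together with short tails of the two neighbouring pieces, one gets: for $z''$ close to $z$ (so $f^{-1}(z'')$ close to $f^{-1}(z)$), the path $P:=I_\F(f^{-1}(z))|_{\mathrm{tail}}\cdot I^n_\F(z)\cdot I_\F(f^n(z))|_{\mathrm{tail}}$ is $\sim_\F$-equivalent to a subpath of $I^{n+2}_\F(f^{-1}(z''))$. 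It then suffices to show that $I^n_\F(z')$ is $\sim_\F$-equivalent to a subpath of $P$ for $z'$ close to $z$. Working in a foliated box around $z$: since $I_\F(f^{-1}(z))$ reaches $z$ from one side of the leaf $\phi_z$ and $I^n_\F(z)$ leaves $z$ on the other side, the path $P$ crosses every leaf in a neighbourhood of $\phi_z$, in particular $\phi_{z'}$, and from that crossing on it crosses exactly the leaves that $I^n_\F(z')$ crosses as it leaves $z'$ and enters its own interior block --- which, by the first assertion, is a block of $I^n_\F(z)$ and hence already lies inside $P$; symmetrically at the terminal leaf $\phi_{f^n(z')}$ using the tail of $I_\F(f^n(z))$ around $f^n(z)$. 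Composing these subpath-equivalence relations gives $I^n_\F(z')$ $\sim_\F$-equivalent to a subpath of $I^{n+2}_\F(f^{-1}(z''))$, uniformly for $z',z''$ in a common neighbourhood $W$.

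The real content, and the step I expect to be the main obstacle, is the geometric bookkeeping just sketched: proving rigorously that the finite box-crossing pattern of a transverse representative is stable under small moves of the base point, and that a trajectory-piece tail really does sweep a full one-sided neighbourhood of the leaves through its terminal point. The delicacy is that a transverse path may re-enter a given box, may run near $\mathrm{sing}(\F)$ elsewhere, and that $\sim_\F$-equivalence records only which leaves are met, not which points are visited; so one must check that none of this global behaviour interferes with the purely local comparisons. The remaining ingredients --- uniform convergence of trajectories, the concatenation identity, and composition of subpath relations --- are routine.
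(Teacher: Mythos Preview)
The paper does not give a proof of this lemma: it is quoted verbatim from \cite{LeCalvezTal2015ForcingTheory} (note the bracketed citation in the lemma heading) and is immediately followed by Definition~\ref{defrecorrente} with no intervening argument. So there is no ``paper's own proof'' to compare your proposal against.

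That said, your sketch is the standard route and matches how the result is obtained in the original reference: one uses continuity of $(t,w)\mapsto f_t(w)$ to get uniform closeness of the trajectories $I^n(z')$ to $I^n(z)$, covers the compact interior piece $\gamma([a,b])$ by finitely many trivializing boxes for $\F$, and checks that the box-crossing pattern is stable under the resulting small perturbation; the second assertion then follows from the first via the concatenation identity $I^{n+2}_\F(f^{-1}(w))=I_\F(f^{-1}(w))\cdot I^n_\F(w)\cdot I_\F(f^n(w))$, the extra flanking pieces providing exactly the room needed to absorb the endpoint wobble. Your honest flagging of the bookkeeping step (stability of the finite crossing pattern, and that the tail pieces sweep a full one-sided leaf-neighbourhood of the terminal leaf) is appropriate: these are the points that require care in a full write-up, but there is no missing idea.
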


One key fact used on the proof of the main theorem of this work is that typical points for ergodic measures are recurrent. We present a similar definition for transverse paths:

\begin{definicao}\label{defrecorrente}
A transverse path $\gamma:\R\to M$ is called \emph{$\F$-recurrent} if for every compact $J\subset\R$ and all $t\in\R$ there exists segments $J'\subset(-\infty,t]$ and $J''\subset[t,+\infty)$ such that $\gamma|_{J'}\sim_{\F}\gamma|_{J}$ e $\gamma|_{J''}\sim_{\F}\gamma|_{J}$.
\end{definicao}

It follows from \ref{lemaestabilidade} that:
\begin{corolario}
\label{correcorrente}
If $z\in\emph{dom}(I)$ is a recurrent point for $f$, then $I^\Z_{\F}(z)$ is  $\F$-recurrent.
\end{corolario}

We also need the following technical proposition:

\begin{proposicao}[\cite{LeCalvezTal2015ForcingTheory}]
\label{proppqeps}
Let $f:\T\to\T$ be an homeomorphism isotopic to the identity, $\hat{f}:\R^2\to\R^2$ a lift of $f$ and assume that $\rho(\hat f)=\{t\rho_0\mid 0\leq t\leq 1\}$, where $\tan(\rho_0)\notin\mathbb{Q}$. Then there exists $\hat y_0\in\emph{dom}(\hat\F)$ such that for every $\epsilon\in\{-1,1\}$ there exists a sequence $(p_l,q_l)_{l\geq 0}$ in $\Z^2\times\N$ satisfying: $$\lim_{l\to+\infty}q_l=+\infty, \quad\lim_{l\to+\infty}\tilde{f}^{q_l}(\tilde y_0)-\tilde y_0-p_l=0, \quad\epsilon\langle p_l,\rho_0^\perp\rangle>0$$ and a sequence $(p'_l,q'_l)_{l\geq 0}$ in $\Z^2\times\N$ satisfying: $$\lim_{l\to+\infty}q'_l=+\infty, \quad\lim_{l\to+\infty}\tilde{f}^{-q'_l}(\tilde y_0)-\tilde y_0-p'_l=0, \quad\epsilon\langle p'_l,\rho_0^\perp\rangle>0.$$ Furthermore, we can take $y_0=\pi(\tilde{y_0})$ such that $\rho(\tilde f,y_0)=\rho_0$.
\end{proposicao}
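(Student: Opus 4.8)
\section*{Proof proposal}

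The plan is to realize the point $y_0$ as a generic point of an ergodic measure whose rotation vector is $\rho_0$, and then to extract the four return sequences from the conservativity of an $\R$-extension of $f$ built from the displacement cocycle. Since $\rho(\hat f)=\{t\rho_0\mid 0\le t\le 1\}$ is a non-degenerate segment, its endpoint $\rho_0$ is an extremal point of the rotation set, so by the description recalled in Subsection~\ref{caprot} there is an ergodic $f$-invariant Borel probability measure $\mu$ with $\int_{\T}\phi\,d\mu=\rho_0$, where $\phi$ denotes the displacement function; as $\rho_0\notin\Q^2$ and $\mu$ is ergodic, $\mu$ is not supported on a periodic orbit, hence is non-atomic. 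By Birkhoff's ergodic theorem applied to $\phi$ and by Poincar\'e recurrence applied to $f$ and to $f^{-1}$, a set of full $\mu$-measure consists of points $y_0\in\supp(\mu)$ that are both forward and backward recurrent and satisfy $\rho(\hat f,y_0)=\rho_0$. Such a $y_0$ is not a fixed point of $f$; since $\hat f$ has a fixed point (Subsection~\ref{caprot}), $f$ admits a maximal isotopy $I$ with an associated Brouwer--Le Calvez foliation $\F$, and $y_0\in\T\setminus\mathrm{fix}(f)\subset\mathrm{dom}(I)=\mathrm{dom}(\F)$. Lifting $y_0$ to $\R^2$ and to the universal cover of $\mathrm{dom}(\F)$ yields $\tilde y_0$ and $\hat y_0\in\mathrm{dom}(\hat\F)$, giving $\hat y_0$ and the ``furthermore'' clause once $y_0$ is chosen also in the full-measure set identified below.

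For the return sequences, set $\psi\colon\T\to\R$, $\psi(x)=\langle\phi(x),\rho_0^\perp\rangle$, so that $\langle\hat f^n(\tilde y_0)-\tilde y_0,\rho_0^\perp\rangle=\sum_{k=0}^{n-1}\psi(f^k(y_0))$ and $\int_{\T}\psi\,d\mu=\langle\rho_0,\rho_0^\perp\rangle=0$. I would consider the skew product $F\colon\T\times\R\to\T\times\R$, $F(x,s)=(f(x),s+\psi(x))$, which preserves the $\sigma$-finite measure $\mu\times\mathrm{Leb}$; since $\mu$ is ergodic, $\psi$ is bounded and $\int\psi\,d\mu=0$, Atkinson's theorem gives that $F$ is conservative, so $\mu\times\mathrm{Leb}$-almost every point of $\T\times\R$ is recurrent for $F$. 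The goal is to upgrade this into: for a full-measure set of $y_0$ and every $\delta>0$ there are infinitely many $n>0$ with $f^n(y_0)\in B(\delta,y_0)$ and $\sum_{k=0}^{n-1}\psi(f^k(y_0))>0$, infinitely many with this sum $<0$, and the analogous statements for $f^{-1}$. Granting this, choose $q_l\to+\infty$ along such a subsequence and let $p_l\in\Z^2$ be the lattice point closest to $\hat f^{q_l}(\tilde y_0)-\tilde y_0$; the closeness of the return forces $\hat f^{q_l}(\tilde y_0)-\tilde y_0-p_l\to 0$, and since $\rho_0\ne 0$ one has $|p_l|\to\infty$, so $\langle p_l,\rho_0^\perp\rangle$ is eventually nonzero with the same sign as $\sum_{k=0}^{q_l-1}\psi(f^k(y_0))$; this yields the two required sequences for $\epsilon=1$ and $\epsilon=-1$, and running the same construction with $f^{-1}$ in place of $f$ produces the sequences $(p_l',q_l')$ with the required signs.

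The step I expect to be the main obstacle is exactly this upgrade: conservativity of $F$ by itself only furnishes returns with $\big|\sum_{k=0}^{n-1}\psi(f^k(y_0))\big|$ small, and at very close returns this sum is forced to be small, so its sign is not controlled a priori. One natural route is to prove that $F$ is in fact ergodic, which is the case unless the cocycle $\psi$ is cohomologous over $(\supp\mu,f,\mu)$ to a cocycle taking values in a proper closed subgroup of $\R$; in the ergodic case $\mu\times\mathrm{Leb}$-almost every orbit is dense in $\supp(\mu)\times\R$ and meets every nonempty open set infinitely often, hence visits $B(\delta,y_0)\times(c,c+\delta)$ and $B(\delta,y_0)\times(-c-\delta,-c)$ infinitely often for a fixed $c>0$, producing the two signs with uniform magnitude. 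In the remaining, degenerate case one subtracts the measurable coboundary to reduce to returns on which the Birkhoff sum of the $\R/c\Z$-valued part vanishes, and then analyses the arithmetic of the integer lattice points nearest to the irrational line $\R\rho_0$, whose $\rho_0^\perp$-components inevitably take both signs along the relevant subsequence. Alternatively, and more in the spirit of the forcing theory recalled above, one can argue by contradiction: were one of the signs missing from the close returns of every admissible $y_0$, then the $\F$-recurrence of $I^\Z_\F(y_0)$ (Corollary~\ref{correcorrente}), Proposition~\ref{propadm}, and the realization results of \cite{LeCalvezTal2015ForcingTheory} would produce a periodic point with rational, nonzero rotation vector lying on the missing side, contradicting $\rho(\hat f)\cap\Q^2=\{(0,0)\}$.
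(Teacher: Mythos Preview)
The paper does not prove this proposition: it is quoted from \cite{LeCalvezTal2015ForcingTheory} and used as a black box, so there is no ``paper's own proof'' to compare against. I will therefore evaluate your proposal on its own merits.

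Your overall architecture is the natural one and matches the ergodic-theoretic spirit of the original argument: pick $y_0$ generic for an ergodic measure with rotation vector $\rho_0$, use the cocycle $\psi=\langle\phi,\rho_0^\perp\rangle$ with $\int\psi\,d\mu=0$, and invoke Atkinson-type recurrence for the $\R$-extension. The reduction from forward to backward sequences is also correct (if $\tilde f^{q_l}(\tilde y_0)-p_l\to\tilde y_0$ then $\tilde f^{-q_l}(\tilde y_0)+p_l\to\tilde y_0$), so the whole problem is the forward sign dichotomy.

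The gap is exactly where you flag it, but your proposed patches do not close it. Your sentence ``$\langle p_l,\rho_0^\perp\rangle$ is eventually nonzero with the same sign as $\sum_{k=0}^{q_l-1}\psi(f^k(y_0))$'' is only valid when $|S_{q_l}\psi(y_0)|$ dominates the approximation error $|\langle p_l-(\tilde f^{q_l}(\tilde y_0)-\tilde y_0),\rho_0^\perp\rangle|$; Atkinson gives returns where $|S_{q_l}\psi|$ is \emph{small}, not bounded away from zero, so the sign transfer fails precisely at the returns Atkinson produces. Route~A is fine when the $\R$-extension is ergodic (then generic orbits visit $B(\delta,y_0)\times(c,c+\delta)$ for fixed $c>0$ and both signs of $c$), and more generally whenever the essential value group $E(\psi)$ is nontrivial; but the coboundary case $E(\psi)=\{0\}$ is not excluded by anything you wrote, and your treatment of it (``analyses the arithmetic of the integer lattice points nearest to the irrational line'') is not an argument: nothing forces the particular return vectors $p_l$ to sample both sides of the line, even though $\Z^2$ as a whole does. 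Route~B is too vague to assess: you have not said which admissible paths would force the unwanted periodic point, and the forcing machinery in this paper is set up to \emph{use} the proposition, not to prove it.

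A workable repair in the spirit of your Route~A is to argue with essential values: if $E(\psi)\supsetneq\{0\}$ you are done as above, while if $E(\psi)=\{0\}$ then $\psi$ is an $L^0$-coboundary, say $\psi=g\circ f-g$; choosing $y_0$ additionally to be a point of approximate continuity of $g$ (Lusin), close returns force $S_{q_l}\psi(y_0)=g(f^{q_l}y_0)-g(y_0)\to 0$, hence $\langle p_l,\rho_0^\perp\rangle\to 0$ as well, and then a separate argument is still needed to show both signs occur among the nonzero values $\langle p_l,\rho_0^\perp\rangle$. One clean way is to pass to the first-return system on a small ball and use that $\int_B S_\tau\psi\,d\mu_B=0$ together with ergodicity of the induced map; but this has to be written out carefully, and as it stands your proposal does not do so.
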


We finish with one of the main results from \cite{calvez2018topological}.

\begin{teo}
\label{teoauto}
Let $M$ be an oriented surface, $f$ an homeomorphism of $M$ isotopic to the identity, $I$ a maximal isotopy for $f$ and $\F$ a Brouwer-Le Calvez foliation transverse to $I$. Assume that  $\gamma:[a,b]\to\emph{dom}(I)$ is an admissible path of order $r$ with a transverse self-intersectin at $\gamma(s)=\gamma(t)$, where $s<t$. Let $\tilde\gamma$ be a lift of $\gamma$ to the universal covering $\widetilde{\emph{dom}}(I)$ of $\emph{dom}(I)$ and let $T$ be a covering transformation such that $\tilde\gamma$ and $T(\tilde\gamma)$ have a $\tilde\F$-transverse intersection at $\tilde\gamma(t)=T(\tilde\gamma)(s)$. Let $\tilde f$ be the lift of $f|_{\emph{dom}(I)}$ to $\widetilde{\emph{dom}}(I)$, and $\hat f$ be the homeomorphism of the anular covering space $\widehat{\emph{dom}}(I)=\widetilde{\emph{dom}}(I)/T$ that is lifted by $\tilde f$. Then:
\begin{enumerate}[(i)]
\item For every rational number $p/q\in(0,1]$, writen in its irreducible form, there exists $\tilde z\in\widetilde{\emph{dom}}(I)$ such that $\tilde{f}^{qr}(\tilde z)=T^p(\tilde z)$ and $\tilde{I}_{\tilde\F}^{\Z}(\tilde z)$ is equivalent to $\Pi_{k\in\Z}T^k(\tilde\gamma|_{[s,t]})$;
\item For every irrational number $\lambda\in[0,1/r]$, there exists a compact $\hat f$ invariant set $\hat{Z}_\rho\subset\widehat{\emph{dom}}(I)$ such that every point$\hat z\in\hat{Z}_\rho$ has a rotation number $\rho(\tilde f,\hat z)=\lambda$. Furthermore, if $\tilde z$ is a lift of $\hat z$, then $\tilde{I}^{\Z}_{\tilde\F}(\tilde z)$ is equivalent to $\Pi_{k\in\Z}T^k(\tilde\gamma|_{[s,t]})$.
\end{enumerate}
\end{teo}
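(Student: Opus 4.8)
The plan is to use the forcing machinery to manufacture, from the single transverse self-intersection, arbitrarily long admissible concatenations of translates of the loop $\tilde\gamma|_{[s,t]}$, and then to extract genuine orbits of $\tilde f$ from these combinatorial objects by a compactness argument, supplemented by a Brouwer/Franks-type fixed point argument that produces the exact equivariance in part (i).

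Write $\tilde\delta=\tilde\gamma|_{[s,t]}$, a transverse path running from $\tilde\gamma(s)$ to $\tilde\gamma(t)=T(\tilde\gamma)(s)$, so that $\Pi_{k=m}^{n}T^k(\tilde\delta)$ is a well-defined transverse path from $T^m(\tilde\gamma)(s)$ to $T^{n+1}(\tilde\gamma)(s)$; its projection $\Gamma=\gamma|_{[s,t]}$ is a transverse loop in $\textrm{dom}(I)$, admissible of order $r$ by Lemma \ref{lemasubcaminhosadmissiveis}. The first step is to show, by induction on $n$, that $\Pi_{k=0}^{n-1}T^k(\tilde\delta)$ is admissible of order $nr$: the $\tilde\F$-transverse intersection of $\tilde\gamma$ with $T(\tilde\gamma)$ at $\tilde\gamma(t)=T(\tilde\gamma)(s)$ lets one apply Proposition \ref{propadm} to produce the order-$2r$ admissible path $\tilde\gamma|_{[a,t]}T(\tilde\gamma)|_{[s,b]}$, from which one extracts, by restriction (Lemma \ref{lemasubcaminhosadmissiveis}), that $\tilde\delta\,T(\tilde\delta)$ is admissible of order $2r$; since each iterated loop $\Gamma^n$ again carries a transverse self-intersection, inherited from that of $\gamma$, the same step feeds the induction. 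A more delicate bookkeeping, using the last clause of Proposition \ref{propadm} — that after a forcing intersection one of the two concatenations is already admissible of the smaller order — yields moreover that $\Pi_{k=0}^{p-1}T^k(\tilde\delta)$ is admissible of order $qr$ for every pair of positive integers $p\leq q$, i.e.\ a fixed number $p$ of blocks can be realized along $qr$ iterates for any $q\geq p$. This quantitative control is exactly what will produce the exponent $qr$ (rather than $pr$) in part (i).

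For the rational case, fix $p/q\in(0,1]$ irreducible and pass to the annular covering $\widehat{\textrm{dom}}(I)=\widetilde{\textrm{dom}}(I)/T$ with its induced homeomorphism $\hat f$; the $T$-periodic path $\Pi_{k\in\Z}T^k(\tilde\delta)$ descends there to a closed loop $\hat\Gamma$ that winds once around the annulus. The admissibility of $\Pi_{k=0}^{p-1}T^k(\tilde\delta)$ of order $qr$ furnishes, for every $N$, a point realizing $N$ consecutive blocks of $\Pi_k T^k(\tilde\delta)$ along proportionally many iterates; letting $N\to\infty$ and using the continuity of transverse trajectories (Lemma \ref{lemaestabilidade}) one extracts $\hat z$ whose complete transverse trajectory $\hat I^\Z_{\hat\F}(\hat z)$ is equivalent to the bi-infinite path $\hat\Gamma^\infty$, so that the orbit of $\hat z$ stays in a fixed compact annular region and $\hat f^{qr}(\hat z)$ has the same full transverse trajectory as $T^p(\hat z)$. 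A Brouwer/Franks-type index argument — of the type that produces a fixed point of $\tilde f$ when $0$ is interior to the rotation set — then upgrades this to an honest periodic point: $\tilde z$ with $\tilde f^{qr}(\tilde z)=T^p(\tilde z)$ and $\tilde I^\Z_{\tilde\F}(\tilde z)$ equivalent to $\Pi_{k\in\Z}T^k(\tilde\gamma|_{[s,t]})$.

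For the irrational case one considers the set of points of $\widehat{\textrm{dom}}(I)$ whose complete transverse trajectory is equivalent to $\hat\Gamma^\infty$: it is nonempty by the previous construction, $\hat f$-invariant, and — since $\hat\Gamma$ is a fixed loop — contained in a compact annular region, so its closure is compact. On it the rotation number is well defined with values in $[0,1/r]$, its set of attained values is closed and, by part (i), contains every rational in $(0,1]$, so a connectedness argument shows that every $\lambda\in[0,1/r]$ occurs; $\hat Z_\lambda$ may then be taken to be the closure of the set of points with rotation number exactly $\lambda$. The main obstacle throughout is the quantitative forcing step of the second paragraph: one must control not merely \emph{that} the concatenations are admissible but the precise number of $f$-iterates they require, so as to land on the prescribed equivariance $\tilde f^{qr}(\tilde z)=T^p(\tilde z)$, and then promote a sequence of finite admissible concatenations to a single orbit whose \emph{entire} bi-infinite transverse trajectory equals the model path — both tasks require repeated, uniform applications of Proposition \ref{propadm} and Lemma \ref{lemaestabilidade} and constitute the technical core of the proof.
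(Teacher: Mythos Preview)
The paper does not prove this theorem. It is stated in the Preliminaries (Section~\ref{sec:preliminaries}) as ``one of the main results from \cite{calvez2018topological}'' and is used as a black box in the proof of Lemma~\ref{lemaauto}. There is therefore no proof in the paper to compare your proposal against.

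That said, your sketch is broadly in the spirit of the argument in \cite{calvez2018topological}, but several steps are genuine gaps rather than routine details. The assertion that ``a more delicate bookkeeping, using the last clause of Proposition~\ref{propadm}'' yields admissibility of $\Pi_{k=0}^{p-1}T^k(\tilde\delta)$ of order $qr$ for all $p\le q$ is not justified: that clause is a dichotomy (either one concatenation drops to $\min$, or both stay at $\max$), and you have not explained how to force the favorable branch at each step. In the rational case you invoke an unspecified ``Brouwer/Franks-type index argument'' to pass from a point whose transverse trajectory is the model path to an actual solution of $\tilde f^{qr}(\tilde z)=T^p(\tilde z)$; this is a substantial part of the proof in \cite{calvez2018topological} and cannot be waved through. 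In the irrational case, taking $\hat Z_\lambda$ to be the \emph{closure} of the set of points with rotation number $\lambda$ does not obviously produce an invariant set on which the rotation number is identically $\lambda$, since you have not established continuity of the rotation number on the ambient compact set; the actual construction proceeds differently.
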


\section{Proofs of the main results}\label{sec:mainresult}
\subsection{Proof of Theorem A}
Let us fix an isotopy $I:\T\times[0,1]\to\T$ between $f$ and the identity, such that $\textrm{fix}(I)\neq\emptyset$, and $\tilde{I}:\R^2\times[0,1]\to\R^2$ a lift of $I$ such that $\textrm{fix}(\tilde I)\neq\emptyset$. By Theorem \ref{teobeguin2016fixed}, we may assume that $I$ is maximal. Let $\F$ be the foliation of $\T$ given by Theorem \ref{foltal}, and $\tilde\F$ its lift for $\R^2$. Let also $\mu_0$ be an ergodic measure whose the rotation vector is $\rho_0$. Let us further assume that $\rho_0$ is in the first quadrant, i.e., that both $(\rho_0)_1$ and $(\rho_0)_2$ are positive, the other cases are analogous.

\begin{lema}\label{lemacurvas}
There are  $\tilde\F$-transverse lines $\alpha_-,\alpha_+:\R\to\R^2$ and $v_-,v_+\in\Z^2$ such that $\alpha_-(t+1)=\alpha_-(t)+v_-$, $\alpha_+(t+1)=\alpha_+(t)+v_+$, for every $t\in\R$, and such that $\langle v_-,\rho_0^\perp\rangle<0$, $\langle v_-,\rho_0\rangle>0$, $\langle v_+,\rho_0^\perp\rangle>0$ and $\langle v_+,\rho_0\rangle>0$.
\end{lema}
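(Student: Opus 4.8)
The plan is to read off the translation vectors $v_\pm$ from Proposition~\ref{proppqeps}, and then to produce $\alpha_\pm$ by closing up a suitable piece of the transverse trajectory of the point that proposition furnishes. Applying Proposition~\ref{proppqeps} with $\hat f=\tilde f$ (the hypotheses coincide with ours) we get $\tilde y_0\in\R^2$ with $y_0:=\pi(\tilde y_0)\in\mathrm{dom}(I)$ satisfying $\rho(\tilde f,y_0)=\rho_0$, together with, for $\eps=+1$, a sequence $(p_l,q_l)_{l\ge 0}$ in $\Z^2\times\N$ such that $q_l\to+\infty$, $\tilde f^{q_l}(\tilde y_0)-\tilde y_0-p_l\to 0$ and $\langle p_l,\rho_0^\perp\rangle>0$, and, for $\eps=-1$, an analogous sequence with $\langle p_l,\rho_0^\perp\rangle<0$ instead. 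Since $\rho(\tilde f,y_0)=\rho_0$ we have $p_l/q_l\to\rho_0$, hence $\langle p_l,\rho_0\rangle/q_l\to\|\rho_0\|^2>0$, so $\langle p_l,\rho_0\rangle>0$ for all large $l$. Choosing such indices, set $v_+:=p_{l_+}$ (from $\eps=+1$) and $v_-:=p_{l_-}$ (from $\eps=-1$); then $\langle v_+,\rho_0^\perp\rangle>0$, $\langle v_-,\rho_0^\perp\rangle<0$ and $\langle v_\pm,\rho_0\rangle>0$, which are the four sign conditions. Note also that $f^{q_l}(y_0)=\pi(\tilde f^{q_l}(\tilde y_0))\to\pi(\tilde y_0+p_l)=y_0$ with $q_l\to+\infty$, so $y_0$ is recurrent; we are free to adjust the indices $l_\pm$, hence $v_\pm$, later.

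Since $\rho(\tilde f,y_0)=\rho_0\neq 0$, the remark after Definition~\ref{defdirigido} gives a representative $\gamma_0$ of $I^{\Z}_{\F}(y_0)$ all of whose lifts to $\R^2$ are directed by $\rho_0/\|\rho_0\|$; fix one lift $\tilde\gamma_0$, parametrized so that $\tilde\gamma_0(0)=\tilde y_0$ and, for each $q\ge 1$, $\tilde\gamma_0|_{[0,q]}$ is a representative of $\tilde I^{q}_{\tilde\F}(\tilde y_0)$, joining $\tilde y_0$ to $\tilde f^{q}(\tilde y_0)$; these finite sub-arcs are admissible, and $\gamma_0$ is $\F$-recurrent by Corollary~\ref{correcorrente}. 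The plan for $\alpha_+$: pick $q=q_{l_+}$ with $\tilde f^{q}(\tilde y_0)$ very close to $\tilde y_0+v_+$, project $\gamma_0|_{[0,q]}$ to $\T$, and concatenate it, inside a small flow box $V$ around $y_0$, with a short arc transverse to $\F$ running from $\pi(\tilde f^{q}(\tilde y_0))$ back to $y_0$ — this is possible once $l_+$ is large, since then the two endpoints lie close together and on the correct sides of the local leaves of $\tilde\F$ inside a lift of $V$ (and, if needed, one enlarges $l_+$ and uses the stability of Lemma~\ref{lemaestabilidade} to realize the same local picture at a genuine return). This produces a transverse loop $\Gamma_+$ on $\T$ whose lift $\alpha_+$ through $\tilde y_0$ satisfies $\alpha_+(t+1)=\alpha_+(t)+v_+$; the line $\alpha_-$ is built the same way from the $\eps=-1$ data.

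The genuine difficulty is that $\alpha_\pm$ must be \emph{lines}, i.e.\ injective and proper, so $\Gamma_\pm$ must lift to an embedding and the closing-up must create no self-intersection. Self-intersections that involve the connecting arc are eliminated by shrinking the flow box $V$, so the point is that $\gamma_0$ itself must have no $\tilde\F$-transverse self-intersection; and this is exactly where the hypothesis on the rotation set is used. If $\gamma_0$ had such a self-intersection, then a finite admissible sub-arc of $\gamma_0$ carrying it (admissible by Lemma~\ref{lemasubcaminhosadmissiveis}) would, by Theorem~\ref{teoauto}(ii), yield for every irrational $\lambda$ in a nondegenerate interval a point of rotation vector $\lambda v_0$ with $v_0\in\Z^2\setminus\{0\}$ fixed; but $\tan(\rho_0)\notin\Q$, so the segment $\rho(\tilde f)=\{t\rho_0\mid 0\le t\le 1\}$ contains no nonzero vector proportional to an integer vector, a contradiction (the borderline sub-case, in which the covering transformation attached to the self-intersection is trivial in $\pi_1(\T)$, must be excluded separately). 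Granting that $\gamma_0$ has no transverse self-intersection, a minimality argument — replacing $\gamma_0|_{[0,q]}$ by a shortest sub-arc whose closure-up is still an essential transverse loop, while keeping, via Proposition~\ref{proppqeps}, its homology class in the required open half-plane determined by $\rho_0$ — should give a \emph{simple} such loop, and then $\alpha_\pm$ is a line with the desired periodicity (its translation vector, a positive primitive multiple still lying in the correct cone, is relabelled $v_\pm$). I expect organizing these choices so that no self-intersection survives and the lift is proper to be the technical core of the proof.
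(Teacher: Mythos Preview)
Your first two paragraphs match the paper's proof closely: the translation vectors $v_\pm$ are read off from Proposition~\ref{proppqeps} exactly as you describe, and the closing-up inside a flow box around $\tilde y_0$ is precisely how the paper produces the periodic transverse path $\alpha'_+=\Pi_{k\in\Z}(\alpha_N'+kv_+)$. The divergence is in how one gets from $\alpha'_\pm$ to a \emph{line}.

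The paper dispatches this in one sentence: whenever $\alpha'_+(s)=\alpha'_+(t)$ with $s<t$, excise the arc $\alpha'_+|_{(s,t]}$ and reparametrize. The result is still transverse (both branches at the junction cross leaves in the same direction), and iterating yields a simple periodic transverse path, hence a line. No appeal to Theorem~\ref{teoauto}, to the structure of $\rho(\tilde f)$, or to the absence of $\tilde\F$-transverse self-intersections of $\gamma_0$ is made at this point.

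Your detour through ``$\gamma_0$ has no $\tilde\F$-transverse self-intersection'' is therefore unnecessary here, and it also does not close the gap: a transverse arc can meet itself without the intersection being $\tilde\F$-transverse (two transverse sub-arcs passing through the same point need not satisfy the above/below configuration in the definition of $\pitchfork_{\F}$), so ruling out the latter does not yield injectivity. You would still need exactly the surgery the paper performs---which is what your unspecified ``minimality argument'' would have to become. The ``borderline sub-case'' you flag (deck transformation trivial in $\pi_1(\T)$) is the content of the paper's later Lemma~\ref{lemaautogamma}, whose proof uses recurrence and Lemma~\ref{lemaestabilidade} in a nontrivial way; it is neither needed for the present lemma nor something one can simply set aside. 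In short: drop the Theorem~\ref{teoauto} machinery at this stage and carry out the elementary loop-removal on $\alpha'_\pm$ directly.
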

\begin{proof}
Let us start by building $\alpha_+$ and $v_+$. Let $\tilde y_0$ and $(p_l,q_l)_{l\geq 0}$  given by Proposition \ref{proppqeps} such that $\lim_{l\to+\infty}q_l=+\infty$, $\lim_{l\to+\infty}(\tilde{f}^{q_l}(\tilde y_0)-\tilde y_0-p_l)=0$ and $\langle p_l,\rho_0^\perp\rangle>0$. Let us fix $\alpha_0\in\tilde{I}_{\tilde\F}(\tilde y_0)$. Let $V_0\subset\R^2$ be a small neighborhood of $\tilde y_0$ such that there exists $h_0:V_0\to(0,1)^2$ a homeomorphism mapping the restriction of $\tilde\F$ to $V_0$ into the vertical foliation oriented downwards in $(0,1)^2$ and such that $(h_0([\alpha_0]\cap V_0))_1=[(h_0(\tilde y_0))_1,1)$ (i.e., $h(\alpha_0\cap V_0)$ crosses all the leaves  on the left of the leaf that passes through $h_0(\tilde y_0)$). Let also  $\varepsilon>0$ such that $B(\varepsilon,\tilde y_0)\subset V_0$. As noted in the Proposition \ref{proppqeps} we can assume that $\rho(\tilde f,\tilde y_0)=\rho_0$, so there is $l_0\in\N$ such that for $l\geq l_0$  we have $\langle p_l,\rho_0\rangle>0$. By Proposition \ref{proppqeps} we have $\lim_{l\to+\infty}(\tilde{f}^{q_l}(\tilde y_0)-\tilde y_0-p_l)=0$, so there is $l_1>0$ such that $\tilde{f}^{q_l}(\tilde y_0)-p_{l}\in B(\varepsilon,\tilde y_0)$ and $\langle p_{l},\rho_0^\perp\rangle>0$, for all $l\geq l_1$. Making $l'=\max\{l_0,l_1\}$, let us denote $N=q_{l'}$ and $v_+=p_{l'}$.

\begin{figure} 
\centering
\includegraphics[scale = 0.5]{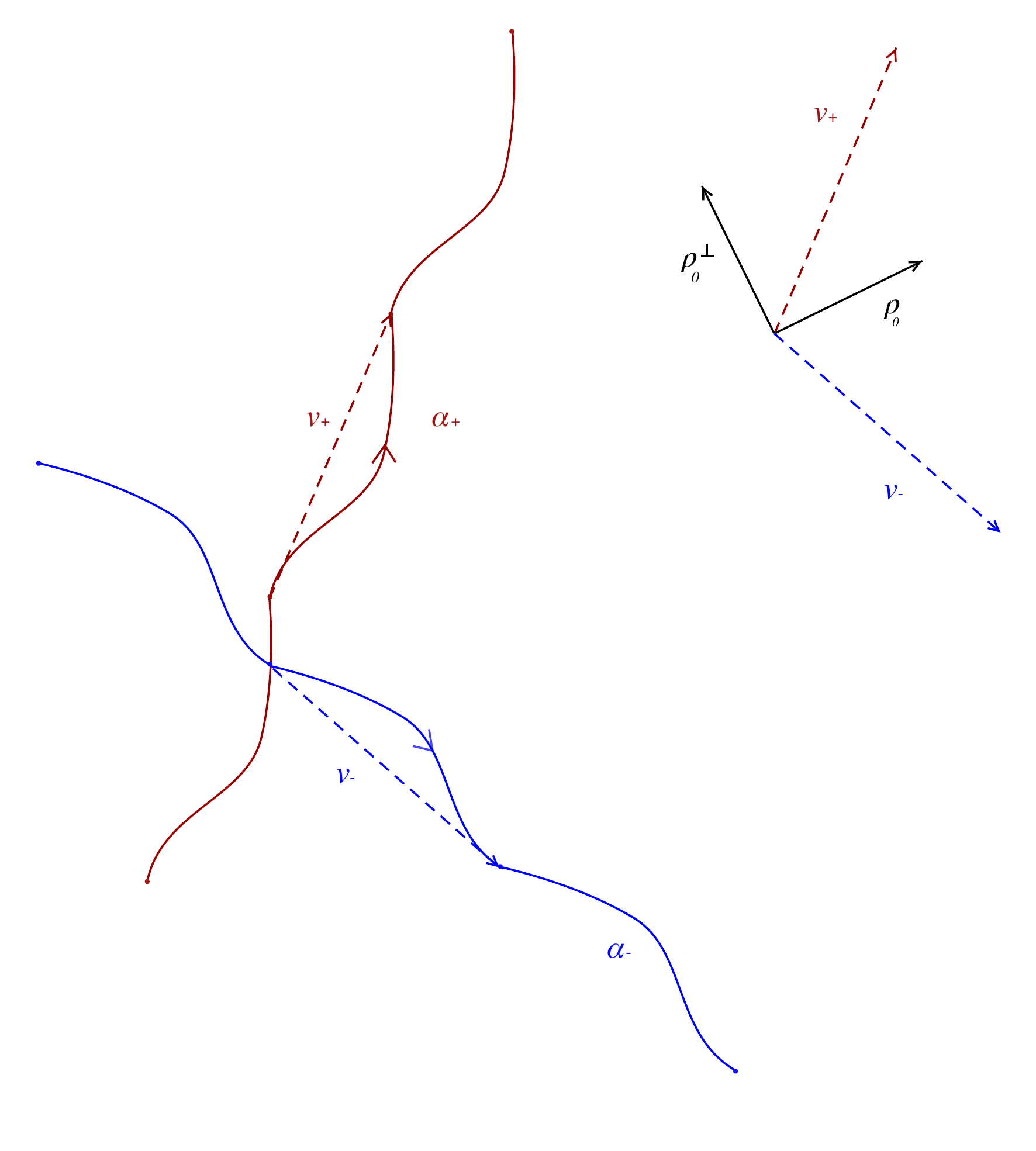}
\caption{Illustration of the Lemma \ref{lemacurvas}}
\label{figalpha}
\end{figure}

Consider now $\alpha_N\in\tilde{I}^N_{\tilde\F}(\tilde y_0)$ such that $[\alpha_0]\subset[\alpha_N]$ and $\alpha_N$ is parameterized by $[0,1]$. Since $\tilde{f}^N(\tilde y_0)-v_+\in B(\varepsilon,\tilde y_0)$, we have that $\tilde{f}^N(\tilde y_0)-v_+\in V_0$, therefore we can modify $\alpha_N$ inside $V_0$ in order to obtain a transverse path $\alpha_N':[0,1]\to\R^2$ so that $\alpha_N$ and $\alpha_N'$ are equal outside of $V_0$ and $\alpha_N'(0)=\tilde{f}^N(\tilde y_0)-v_+$ (to modify $\alpha_N$ in $V_0$, it is enough to modify $h_0(\alpha_N\cap V_0)$ in $(0,1)^2$ and take it back to $V_0$ using $h_0^{-1}$, see Figure \ref{figlemacurvas}). Now, let us define $\alpha'_+=\Pi_{k\in\Z}(\alpha_N'+kv_+)$. Since $\alpha'_+$ is a transverse path, if $\alpha'_+$ has self-intersection, i.e., if $\alpha'_+(s)=\alpha'_+(t)$, with $s<t$, we can remove the arc $\alpha'_+|_{(s,t]}$  and reparametrize in a suitable way, getting like this a new path, which we shall denote by $\alpha_+$. Therefore we can assume that $\alpha_+$ is a simple path and so, by the construction, we get that $\alpha_+$ is a line satisfying the conditions of the statement.

\begin{figure} 
\centering
\includegraphics[scale = 0.9]{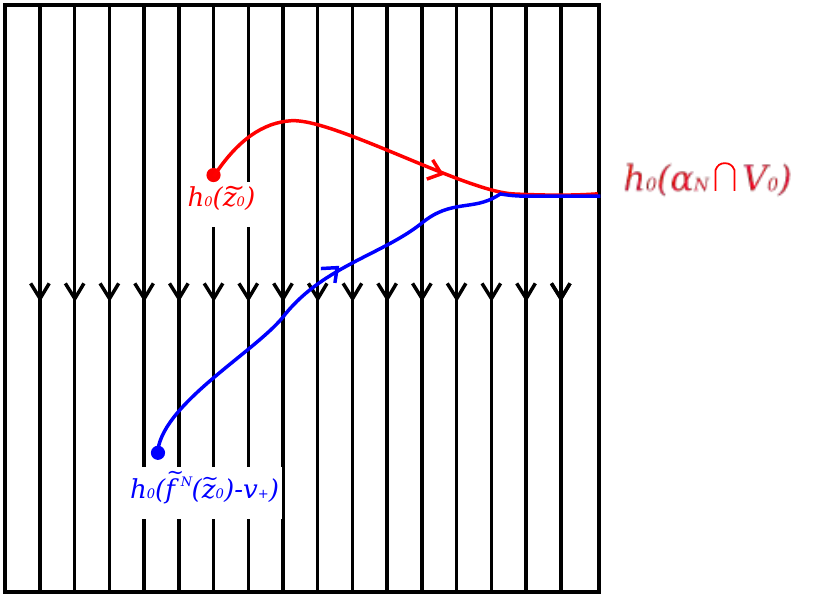}
\caption{Construction of $\alpha_N'$}
\label{figlemacurvas}
\end{figure}

The construction of $\alpha_-$ and $v_-$ is analogous, using  Proposition \ref{proppqeps} with $\epsilon=-1$.
\end{proof}

\begin{lema}\label{lemasobra}
There is $L_0>0$ such that, for every $\tilde{x}\in\R^2\setminus\emph{sing}(\tilde{\F})$, there is a transverse path $\tilde{\gamma}_{\tilde x}\in\tilde{I}_{\tilde{\F}}(\tilde x)$,  with $\emph{diam}(\tilde{\gamma}_{\tilde x})<L_0$. 
\end{lema}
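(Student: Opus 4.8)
The plan is to exploit the compactness of $\T$ together with the fact that $\F$ is a singular foliation of $\T$ whose domain is $\mathrm{dom}(I)$, plus the local stability of transverse paths coming from Lemma \ref{lemaestabilidade}. The key point is that the transverse realization $I_\F(\tilde x)$ of the isotopy trajectory depends, up to equivalence, only on $x=\pi(\tilde x)\in\mathrm{dom}(I)$, and that equivalent transverse paths can be chosen to have comparable images once we fix a fundamental domain. So I would first work downstairs on $\T$: for each $x\in\mathrm{dom}(I)$ fix a representative $\gamma_x\in I_\F(x)$ and its lift. The trajectory $I(x)$ is a path from $x$ to $f(x)$, hence its diameter upstairs is bounded by $\mathrm{diam}(I)+1$ uniformly (the isotopy is defined on the compact $\T$), but the \emph{transverse} representative $\gamma_x$ need not a priori stay close to $I(x)$ — that is the whole subtlety, since a transverse path equivalent to a short loop can wander. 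The strategy is therefore to produce a \emph{uniform} bound by a covering argument rather than by controlling each $\gamma_x$ individually.

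Concretely, I would proceed as follows. First, for each $x\in\mathrm{dom}(I)$, choose $n(x)\geq 1$ and a representative of $I^{n(x)}_\F(x)$ parametrized by $[0,1]$; actually it suffices to take $n(x)=1$ and use Lemma \ref{lemaestabilidade} with $n=1$: there is a neighborhood $V_x$ of $x$ such that for every $x'\in V_x$, the path $I_\F(x)|_{[a,b]}$ (for fixed $0<a<b<1$) is equivalent to a subpath of $I_\F(x')$, and conversely $I_\F(x')$ is equivalent to a subpath of $I^3_\F(f^{-1}(x''))$ for $x',x''\in W_x$. The first half of Lemma \ref{lemaestabilidade} does not quite give what we want (it controls a compact middle piece), so the relevant statement is the \emph{second} half: for $x',x''$ in a neighborhood $W_x$ of $x$, $I_\F(x')$ is equivalent to a subpath of $I^3_\F(f^{-1}(x''))$. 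Fixing a reference point $x$, the path $I^3_\F(f^{-1}(x))$ has some finite diameter $D_x$ once lifted; then every $x'\in W_x$ has a representative $\gamma_{x'}\in I_\F(x')$ that, up to equivalence, sits inside (a reparametrization of) a lift of $I^3_\F(f^{-1}(x))$, hence can be taken with $\mathrm{diam}(\tilde\gamma_{x'})\leq D_x + C$, where $C$ accounts for the passage between equivalent subpaths and the lift normalization. Since $\mathrm{dom}(I)$ need not be compact, cover $\T$ (not just $\mathrm{dom}(I)$) by finitely many such neighborhoods $W_{x_1},\dots,W_{x_m}$ — points of $\mathrm{sing}(\F)=\mathrm{fix}(I)$ are excluded from the statement anyway, and near a singularity the trajectory is trivial so no bound is needed there; more carefully, cover the compact set $\T$, discard indices whose $W_{x_i}$ meets $\mathrm{dom}(I)$ only, and set $L_0=\max_i D_{x_i}+C+1$.

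The main obstacle I anticipate is exactly the one flagged above: transverse paths equivalent to a bounded path can have large diameter, so one cannot simply say "$\gamma_{\tilde x}$ is close to the isotopy trajectory." The honest fix is that $I_\F(\tilde x)$ is unique \emph{up to equivalence} (Theorem \ref{foltal}), and within an equivalence class one is free to choose a representative; the content of Lemma \ref{lemaestabilidade}, read correctly, is that the equivalence class of $I_\F(x')$ embeds into that of the fixed path $I^3_\F(f^{-1}(x))$ for $x'$ near $x$, and an equivalence class containing a path of diameter $D$ contains a representative of diameter at most $D+C$ for a universal $C$ (choose the subpath on the nose, then push it by the homotopy $H$ of Definition \ref{defequivalence}, whose image is contained in the leaves crossed, which themselves are pinned down by the endpoints). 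One must be slightly careful that the constant $C$ from the equivalence/lift bookkeeping is genuinely uniform; since there are finitely many reference points $x_i$ this is automatic. Then $L_0$ as above works, and for $\tilde x\in\R^2\setminus\mathrm{sing}(\tilde\F)$ with $\pi(\tilde x)\in W_{x_i}$, the lift of the corresponding $\gamma_{x_i}$-based representative (translated by the appropriate element of $\Z^2$ so that it starts at $\tilde x$) is the desired $\tilde\gamma_{\tilde x}$ with $\mathrm{diam}(\tilde\gamma_{\tilde x})<L_0$.
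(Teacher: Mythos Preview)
Your argument has a genuine gap at the compactness step. The neighborhoods $W_x$ from Lemma~\ref{lemaestabilidade} are defined only for $x\in\mathrm{dom}(I)$, and $\mathrm{dom}(I)=\T\setminus\mathrm{fix}(I)$ is \emph{not} compact. Your attempt to patch this by covering $\T$ and dismissing the singularities (``near a singularity the trajectory is trivial so no bound is needed there'') is precisely where the argument breaks: for $x'$ close to a singularity $p\in\mathrm{fix}(I)$ the isotopy path $I(x')$ is indeed short, but its transverse realization $I_\F(x')$ is only homotopic to $I(x')$ inside $\mathrm{dom}(I)$, and a short path near a puncture can be homotopic (rel endpoints, in the punctured surface) to an arbitrarily long one. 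Producing a uniform diameter bound for such $x'$ is exactly the content of the lemma, so this step is circular. There is also a secondary issue: your claim that ``an equivalence class containing a path of diameter $D$ contains a representative of diameter at most $D+C$'' is not justified as stated, since equivalent transverse paths differ by sliding along leaves of $\tilde\F$, and leaves in $\R^2$ are unbounded; the phrase ``pinned down by the endpoints'' does not apply to them. This second point can plausibly be repaired when $x'$ sits in a flow box around the reference point $x$, but the first cannot be repaired within your framework without an independent argument near each singularity.

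The paper's proof takes a completely different route that is uniform in $\tilde x$ from the outset and never isolates the singularities. Using the periodic transverse lines $\alpha_\pm$ of Lemma~\ref{lemacurvas}, it encloses all isotopy trajectories $\tilde I(\tilde x)$, $\tilde x\in[0,1]^2$, in a bounded region $U$ bounded by four translates of $\alpha_\pm$, and shows the key geometric fact that every leaf of $\tilde\F$ meets $U$ in a \emph{connected} arc. Then, working in the universal cover of $\mathrm{dom}(\tilde I)$, it straightens the lifted isotopy path $\hat\beta=\hat I(\hat x)$ into an almost-transverse path $\gamma^*_{\hat x}$ by replacing its non-monotone portions with leaf segments, and smooths this inside small tubular neighborhoods to obtain a genuinely transverse $\gamma_{\hat x}$. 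The connectedness of leaf intersections with $U$ forces the projection $\tilde\gamma_{\tilde x}$ to stay in $B(1,U)$, giving the uniform bound $L_0$. The role of Lemma~\ref{lemacurvas} here is essential and has no analogue in your proposal.
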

\begin{proof}
First, let us note that it is enough to prove the result for points in $[0,1]^2$, since $\tilde{I}_{\tilde\F}(\tilde x+w)=\tilde{I}_{\tilde\F}(\tilde x)+w$ for every $\tilde x\in\R^2$ and every $w\in\Z^2$. Now, since $\tilde I$ is continuous, there is $L>0$ such that $\tilde{I}([0,1]^2\times [0,1])\subset B(L,0)$, i.e., for every point $\tilde x\in[0,1]^2$ the isotopy path $\tilde{I}(\tilde x)$ is contained in $B(L,0)$. 

Now, let  $\alpha_+$ and $\alpha_-$ be the transverse lines given by Lemma \ref{lemacurvas}, and also let $v_1,v_2,v_3,v_4\in\Z^2$ be such that $B(L,0)$ is contained in a bounded connected component of $\R^2\setminus([\alpha_++v_1]\cup[\alpha_-+v_2]\cup[\alpha_++v_3]\cup[\alpha_-+v_4])$, which we shall denote by $U$, and $U\subset R(\alpha_++v_1)\cap R(\alpha_-+v_2)\cap L(\alpha_++v_3)\cap L(\alpha_-+v_4)$ (see Figure \ref{figsobra1}). 

\begin{figure} 
\centering
\includegraphics[scale = 0.45]{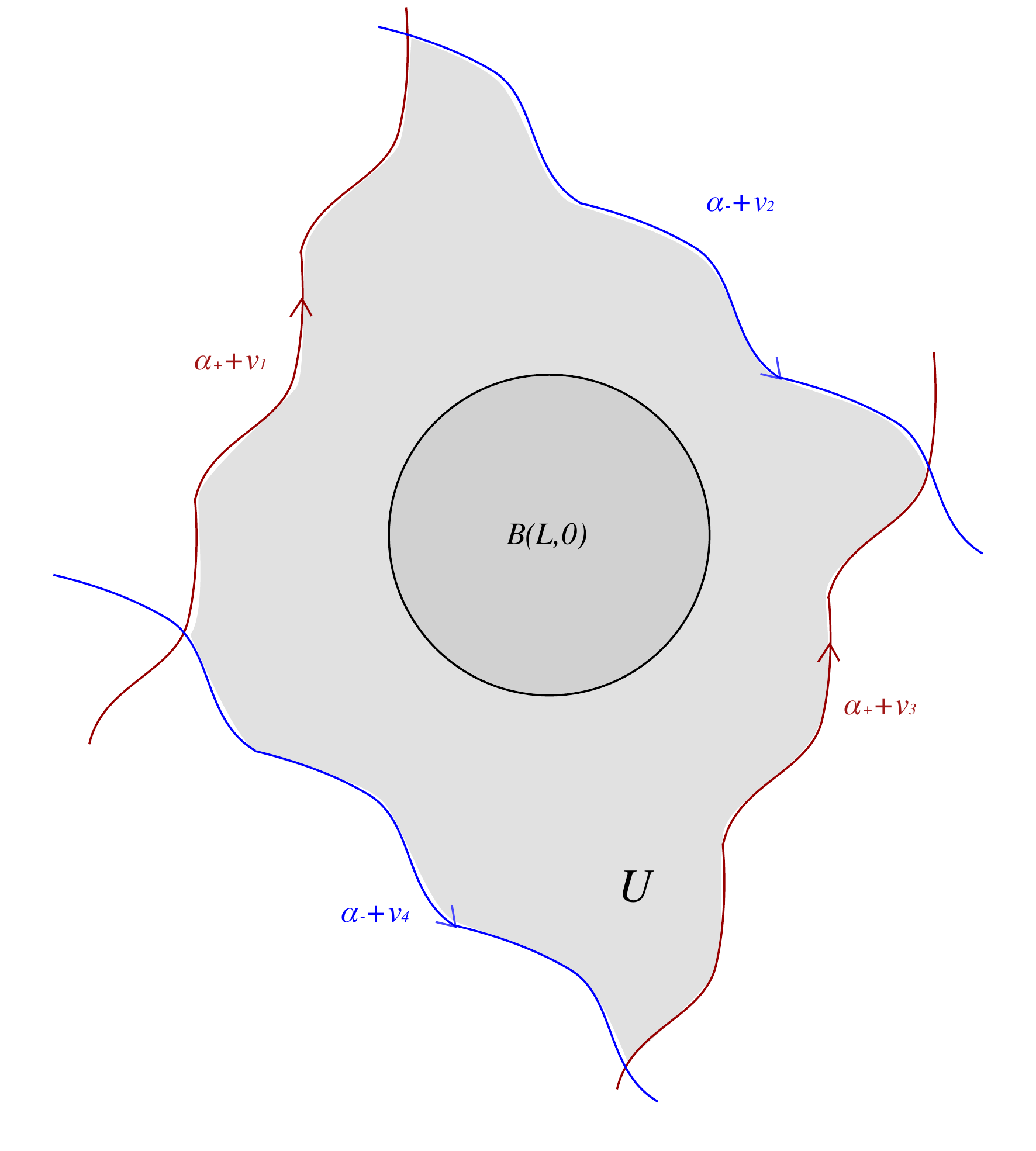}
\caption{Illustration of the set $U$}
\label{figsobra1}
\end{figure}

We claim that if $\phi$ is a leaf of $\tilde\F$ that intersects $U$, then $[\phi]\cap U$ is connected (that is, a segment of $\phi$). In fact, as remarked right after Definition \ref{defequivalence}, since $\alpha_+$ and $\alpha_-$ (as well as their translations) are transverse lines, we have that the leaf $\phi$ crosses each line at most once, and always from left to right of the line. Now let $t_1<t_2$ be such that $\phi(t_i)\in U,\, i\in\{1,2\}$. Note that to prove that $[\phi]\cap U$ is connected, it is enough to prove that $\phi(t)\in U$, for every $t_1<t<t_2$. Since $\phi(t_1)\in U$ and $t>t_1$, we have that $\phi(t)\in R(\alpha_++v_1)$ and $\phi(t)\in R(\alpha_-+v_2)$. Analogously, since $\phi(t_2)\in U$ and $t<t_2$, we have that $\phi(t)\in L(\alpha_++v_3)$ and $\phi(t)\in L(\alpha_-+v_4)$. So, $\phi(t)\in R(\alpha_++v_1)\cap R(\alpha_-+v_2)\cap L(\alpha_++v_3)\cap L(\alpha_-+v_4)$. Furthermore, as $\phi(t_1)$ and $\phi(t_2)$ belong to $U$, we have that no point of $\phi|_{[t_1,t_2]}$ intersects $[\alpha_++v_1]\cup[\alpha_-+v_2]\cup[\alpha_++v_3]\cup[\alpha_-+v_4]$. Therefore $\phi(t)\in U$, proving that $[\phi]\cap U$ is connected.

Let $\widehat{\textrm{dom}}(\tilde I)$ be the universal covering of $\textrm{dom}(\tilde I)$, $\hat\pi:\widehat{\textrm{dom}}(\tilde I)\to\textrm{dom}(\tilde I)$ the covering map, $\widehat{I}$ a lift of $\tilde{I}|_{\textrm{dom}(\tilde I)}$ to $\widehat{\textrm{dom}}(\tilde I)$ and $\hat\F$ the lift of $\tilde\F$ to $\widehat{\textrm{dom}}(\tilde I)$. Since $I$ is maximal, we have that $\tilde I$ is maximal, and therefore $\hat f:\widehat{\textrm{dom}}(\tilde I)\to\widehat{\textrm{dom}}(\tilde I)$  is a Brouwer homeomorphism, where $\hat f$ is a lift of $\tilde{f}|_{\textrm{dom}(\tilde I)}$. Let us fix now $\tilde x\in[0,1]^2$ such that $\tilde x\notin \textrm{sing}(\tilde\F)$ and $\hat{x}\in\widehat{\textrm{dom}}(\tilde I)$ a lift of $\tilde x$. Being $\Phi(\hat x)=\{\phi\in\hat\F|\,\hat{x}\in R(\phi) \textrm{ and }\hat{f}(\hat x)\in L(\phi)\}\cup\{\phi_{\hat x},\phi_{\hat{f}(\hat x)}\}$, note that $\Phi(\hat x)$ is the set of leafs that intersect the transverse trajectory $\hat{I}_{\hat\F}(\hat x)$, and beyond that, $\Phi(\hat x)$ is totally ordered by the relation $\phi_1<\phi_2$ if $R(\phi_1)\subset R(\phi_2)$, because $\hat f$ is a Brouwer homeomorphism and the leaves of $\hat\F$ are Brouwer lines. Using this order, we can parameterize $\Phi(\hat x)$ by a parameter $s\in[0,1]$ in such a way that $\phi_0=\phi_{\hat x}$ and $\phi_1=\phi_{\hat{f}(\hat x)}$.

Denoting the isotopy path  $\hat{I}(\hat x):[0,1]\to\widehat{\textrm{dom}}(\tilde I)$ by $\hat\beta$, we can define the following functions of the parameter $s$
$$t_-(s)=\begin{cases}
               0, \,\textrm{if }s=0\\
               \inf\{t\in[0,1]\mid\hat{\beta}([t,1])\cap R(\phi_s)=\emptyset\}, \,\textrm{if }s\in(0,1]
            \end{cases} $$
and
$$t_+(s)=\begin{cases}
               \inf \{t\in[0,1]\mid \hat{\beta}([t, 1])\subset L(\phi_s)\}, \,\textrm{if }s\in[0,1)\\
               1, \,\textrm{if }s=1.
            \end{cases} $$
Intuitively, $t_-(s)$ is the moment when $\hat\beta$ was on the right side of $\phi_s$ for the last time, and $t_+(s)$ is the first moment in which $\hat\beta$ is always on the left side of $\phi_s$. Note that, if $s_1<s_2$, then $t_{-}(s_1)\leq t_{+}(s_1) < t_{-}(s_2)\leq t_{+}(s_2)$. So we have that $t_-$ and $t_+$ coincide, except possibly in a countable set of discontinuities. However, note that if we list the discontinuity points as $(s_i)_{i\in\N}$, we have that $\sum d_i\leq 1$, where $d_i=t_+(s_i)-t_-(s_i)$ (because $t_\pm([0,1])\subset[0,1]$).

Now let us define a path $\gamma^*_{\hat x}:[0,1]\to\widehat{\textrm{dom}}(\tilde I)$ which will be transverse except in the discontinuities of $t_-$ and $t_+$ as follows: we make $\gamma_{\hat x}^*$  be equal to $\hat\beta$ in the points where $t_-(s)=t_+(s)$ and where $t_-(s)\neq t_+(s)$ we make  $\gamma_{\hat x}^*$ be equal to the leaf segment $\phi_s$ which connects $\hat{\beta}(t_-(s))$ to $\hat{\beta}(t_+(s))$ (see Figure \ref{figsobra2}).

\begin{figure} 
\centering
\includegraphics[scale = 0.75]{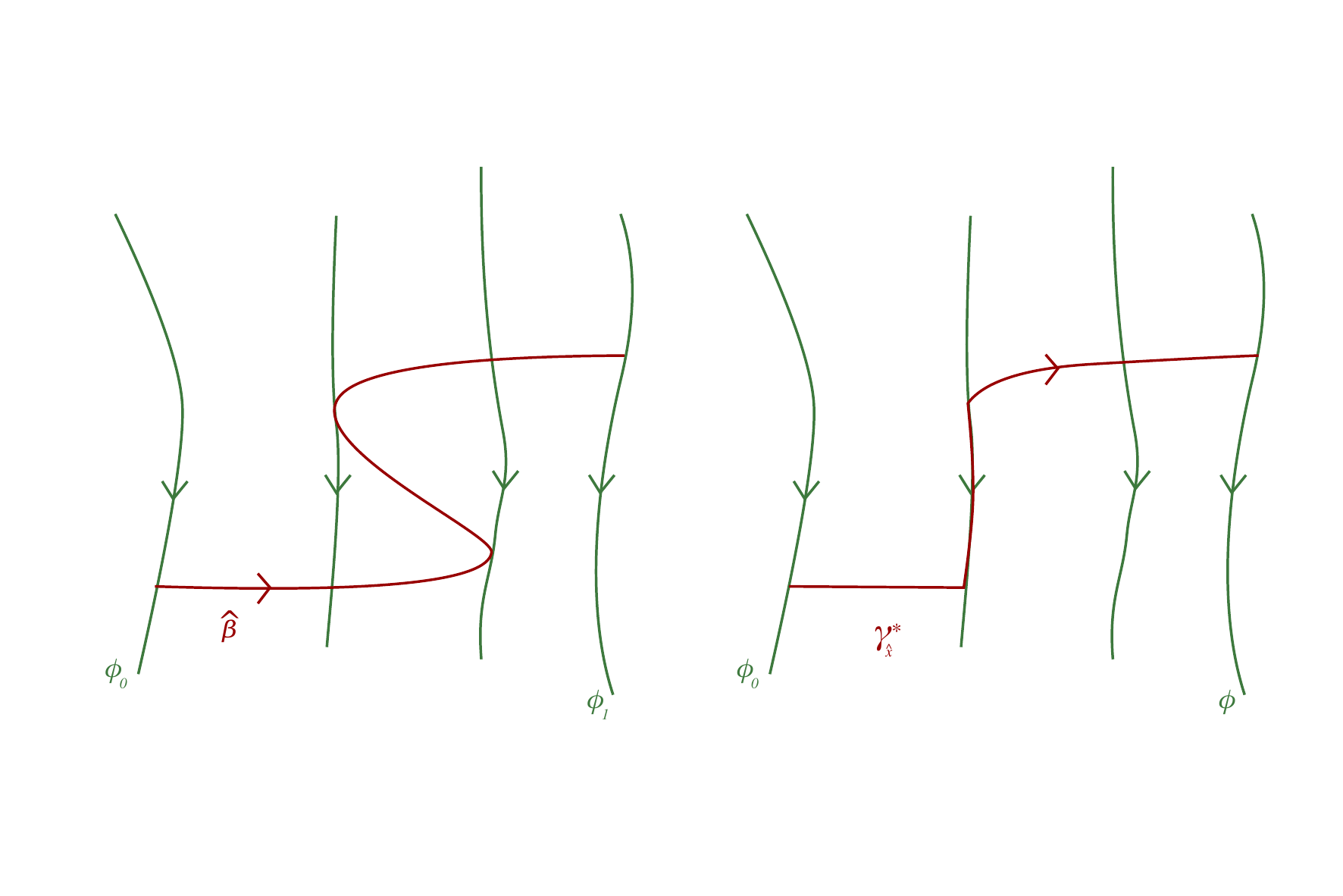}
\caption{Illustration of the curves $\hat\beta$ e $\gamma^*_{\hat x}$}
\label{figsobra2}
\end{figure}

Now, since $\gamma^*_{\hat x}$ is made by leaves points or leaves segments, we have that for each $s\in[0,1]$ we can find $V_s$ a tubular neighborhood of the point (or segment) of $\gamma^*_{\hat x}$ which intersects the leaf $\phi_s$ and $\varepsilon_s\in(0,1)$ so that $V_s\subset B(\varepsilon_s,[\gamma_{\hat x}^*]\cap[\phi_s])$. Then, $[\gamma^*_{\hat x}]\subset\cup_{s\in[0,1]}V_s$, and by compactness we have $[\gamma^*_{\hat x}]\subset\cup_{j=0}^k V_{s_j}\subset\cup_{j=0}^k B(\varepsilon_{s_j},[\gamma_{\hat x}^*]\cap[\phi_{s_j}])$, for some $k\in\N$ (note that the diameter of $B(\varepsilon_{s_j},[\gamma_{\hat x}^*]\cap[\phi_{s_j}])$ is uniformly bounded, even for the values of $s_j$ such that $[\gamma_{\hat x}^*]\cap[\phi_{s_j}]$ is a leaf segment, because $\sum d_i\leq 1$). Therefore, we can partition the interval $[0,1]$ in a finite number of closed subintervals and modify $\gamma_{\hat x}^*$ in each interval, inside the tubular neighborhoods, keeping the ends of the intervals fixed, in order to obtain a transverse path $\gamma_{\hat x}$ such that $[\gamma_{\hat x}]\subset\cup_{j=0}^k B(\varepsilon_{s_j},[\gamma_{\hat x}^*]\cap[\phi_{s_j}])$ (see Figure \ref{figsobra3})                .

\begin{figure} 
\centering
\includegraphics[scale = 0.75]{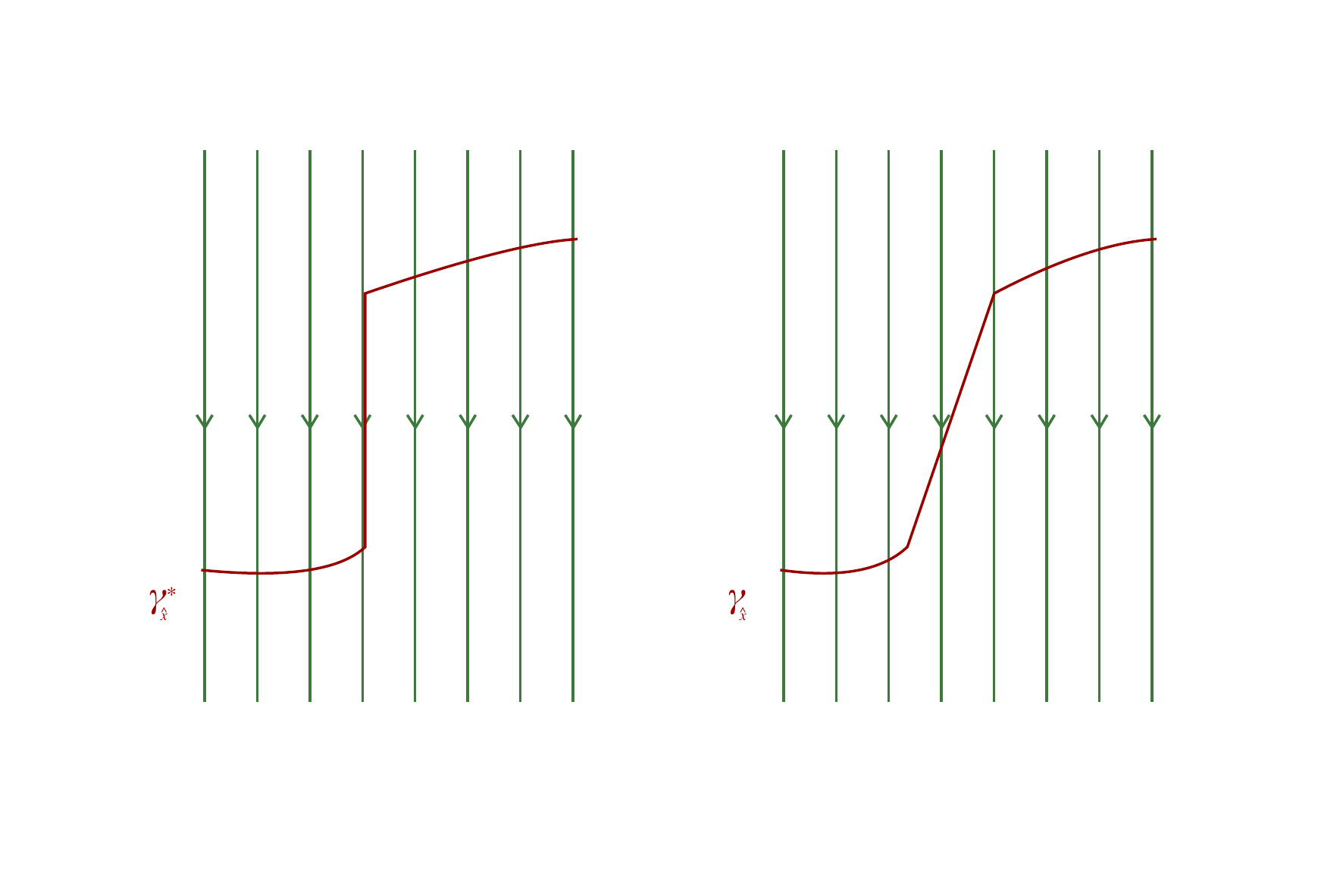}
\caption{Modification of $\gamma^*_{\hat x}$ in $\gamma_{\hat x}$}
\label{figsobra3}
\end{figure}

Note now that, since $\hat\pi(\hat\beta)=\tilde{I}(\tilde x)$ is contained in $U$ and, if $\phi$ is a leaf of $\tilde\F$, then $[\phi]\cap U$ is connected, therefore $\hat\pi(\gamma^*_{\hat x})$ is also contained in $U$. So, $\hat\pi(\cup_{j=0}^k B(\varepsilon_{s_j},[\gamma_{\hat x}^*]\cap[\phi_{s_j}]))\subset B(\max\{\varepsilon_{s_j}\},U)\subset B(1,U)$, and then, denoting $\hat\pi(\gamma_{\hat x})=\tilde{\gamma}_{\tilde x}$, we have that $\tilde{\gamma}_{\tilde x}\subset B(1,U)$ and $\tilde{\gamma}_{\tilde x}\in \tilde{I}_{\tilde\F}(\tilde x)$. Since $U$ is bounded, we have that there is $L_0>0$ such that $\textrm{diam}(\tilde{\gamma}_{\tilde x})<L_0$, proving the result.
\end{proof}

For now on, let us take $\tilde z_0$ to be the point given by proposition \ref{proppqeps} and let $z_0=\pi(\tilde z_0)$. Note that $z_0$ is recurrent, and has rotation vector $\rho_0$.
We will denote by $\g$ a element of $\tilde{I}_{\tilde{\F}}^\Z(\tilde z_0)$ which passes through $\tilde z_0$ and by $[\g]$ its image. Using Lemma \ref{lemasobra}, we can assume that for each $n\in\Z$, the segment of $\g$ between $\tilde{f}^n(\tilde{z}_0)$ and $\tilde{f}^{n+1}(\tilde{z}_0)$ has diameter less than $L_0$. We can also assume that $\g$ is parameterized so that $\g(n)=\tilde{f}^n(\tilde z_0)$, for every $n\in\Z$.

\begin{definicao}\label{defcone}
Let $v\in\R^2$ be a unit vector such that $v\neq \rho_0$ and $v\neq\rho_0^\perp$, and denote by $v_s$ the vector symmetrical to $v$ with respect to the direction of $\rho_0$ (i.e., $\langle v_s,\rho_0 \rangle=\langle v,\rho_0 \rangle$ and $\langle v_s,\rho^\perp_0 \rangle=-\langle v,\rho^\perp_0 \rangle$). Let us now denote the straight lines generated by $v$ and $v_s$ passing through $\tilde{z}_0$ by $r_v$ and $r_{v_s}$ (i.e., $r_v(t)=\tilde{z}_0+tv$, for $t\in\R$). We have that $\R^2\setminus([r_v]\cup [r_{v_s}])$ has four connected components, and denote by $C_1$ and $C_2$ the components that intersect the straight line generated by $\rho_0$ passing through  $\tilde{z}_0$. We will call \emph{cone generated by $\rho_0$ with inclination $v$ and origin $\tilde{z}_0$} to the closure of $C_1\cup C_2$, and we will denote such a set by $C_{\tilde{z}_0}^{\rho_0}(v)$ (see Figure \ref{figcone}).

\begin{figure} 
\centering
\includegraphics[scale = 0.5]{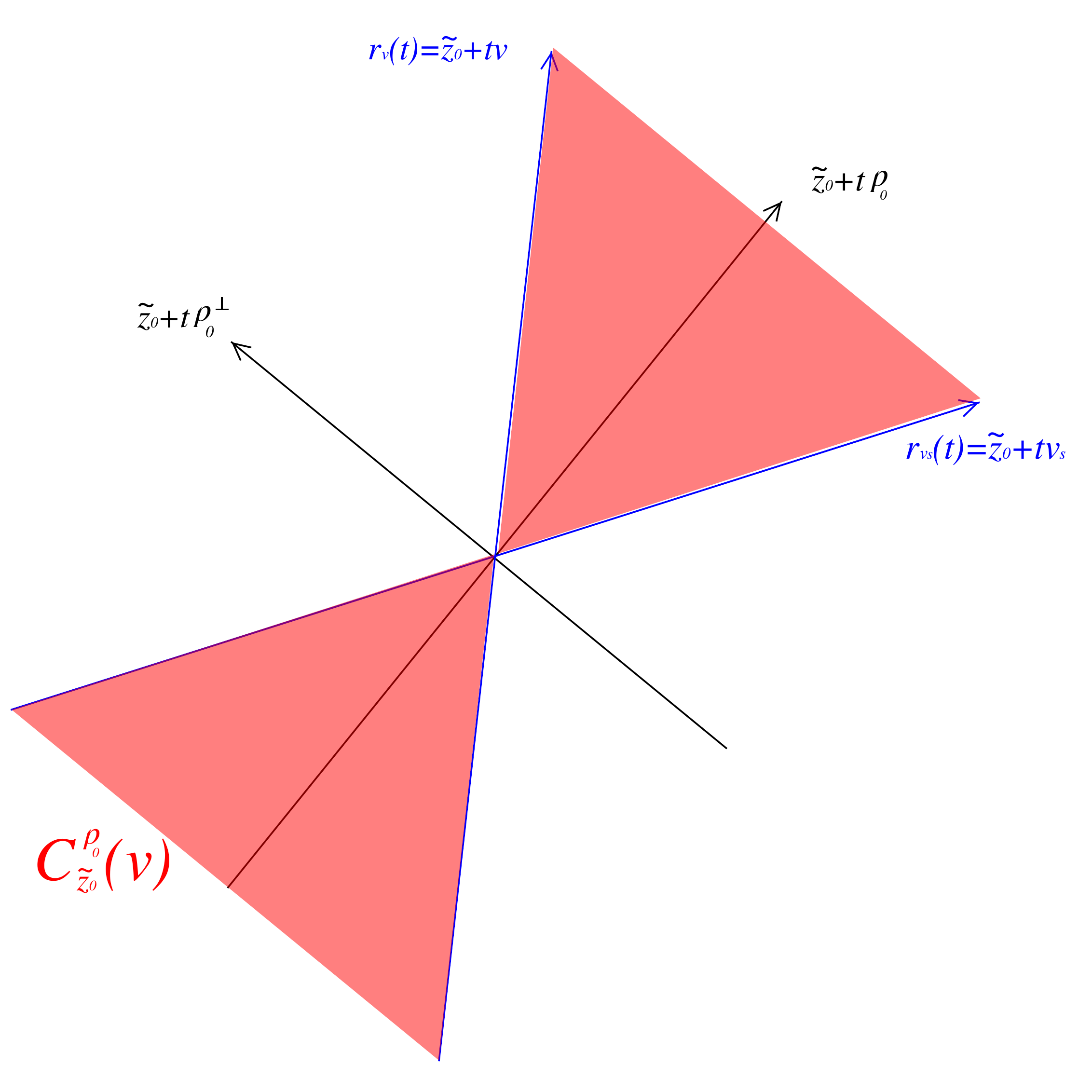}
\caption{Illustration of $C_{\tilde{z}_0}^{\rho_0}(v)$}
\label{figcone}
\end{figure}
\end{definicao}

Note that in Definition \ref{defcone} we have  $\partial(C_{\tilde{z}_0}^{\rho_0}(v))=[r_v]\cup [r_{v_s}]$

In the next lemma we will denote, for $L>0$,  $B(L,A)=\cup_{\tilde x\in A}B(L,\tilde x)$, where $A\subset\R^2$.

\begin{lema}\label{lemacone}
Given $v\in\R^2$ as in Definition \ref{defcone}, there is $L_1>0$  such that $[\g]\subset B(L_0,C_{\tilde{z}_0}^{\rho_0}(v))\cup B(L_1,\tilde{z}_0)$, where $L_0$ is given by the Lemma \ref{lemasobra}.
\end{lema}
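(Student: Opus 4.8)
The plan is to use the fact that the trajectory $\g$ passes through the points $\tilde f^n(\tilde z_0)$, that $z_0$ has rotation vector $\rho_0$, and that the rotation set $\rho(\tilde f)$ is the segment $[0,\rho_0]$, together with the uniformly bounded diameter of each arc of $\g$ between consecutive iterates (Lemma~\ref{lemasobra}). The heart of the matter is to control how far the \emph{iterates} $\tilde f^n(\tilde z_0)$ can stray from the line $\R\rho_0$ through $\tilde z_0$. Once we know that $\langle \tilde f^n(\tilde z_0)-\tilde z_0,v\rangle$ and $\langle \tilde f^n(\tilde z_0)-\tilde z_0,v_s\rangle$ both have the right sign (so that all $\tilde f^n(\tilde z_0)$ lie in $C^{\rho_0}_{\tilde z_0}(v)$, or at least in a bounded neighborhood of $\tilde z_0$ plus the cone), the inclusion $[\g]\subset B(L_0,C^{\rho_0}_{\tilde z_0}(v))\cup B(L_1,\tilde z_0)$ follows: each arc of $\g$ between $\tilde f^n(\tilde z_0)$ and $\tilde f^{n+1}(\tilde z_0)$ has diameter $<L_0$, so it is contained in the $L_0$-neighborhood of either endpoint, hence in $B(L_0,C^{\rho_0}_{\tilde z_0}(v))$ whenever that endpoint lies in the cone.

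So first I would reduce the lemma to the statement: there is $L_1>0$ such that every $\tilde f^n(\tilde z_0)$ with $n\in\Z$ lies in $C^{\rho_0}_{\tilde z_0}(v)\cup B(L_1,\tilde z_0)$. Next I would prove this by a forcing/contradiction argument. Suppose infinitely many iterates escape the cone; say (passing to one of the four sides, \twlog) there are $n_k\to+\infty$ with $\langle \tilde f^{n_k}(\tilde z_0)-\tilde z_0,\rho_0^\perp\rangle\to+\infty$. Using Corollary~\ref{correcorrente}, $\g=\tilde I^\Z_{\tilde\F}(\tilde z_0)$ is $\tilde\F$-recurrent. The escape in the $\rho_0^\perp$ direction, combined with Proposition~\ref{proppqeps} (which furnishes, for the opposite sign $\epsilon=-1$, orbit segments of $\tilde f$ returning near $\tilde z_0$ with translation vectors $p_l$ satisfying $\langle p_l,\rho_0^\perp\rangle<0$), should force $\g$ to have a transverse self-intersection that, via Theorem~\ref{teoauto}, produces points with rotation vectors outside the segment $[0,\rho_0]$ — contradicting the hypothesis on $\rho(\tilde f)$. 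Concretely: a leaf of $\tilde\F$ met by a far-up portion of $\g$ together with a leaf met by a far-down portion (coming from the backward orbit segments of Proposition~\ref{proppqeps}) lets one apply Proposition~\ref{proprl} to get an $\F$-transverse intersection; the forcing Proposition~\ref{propadm} and then Theorem~\ref{teoauto} yield periodic orbits of $\tilde f$ realizing rotation vectors with $\langle\cdot,\rho_0^\perp\rangle$ of the forbidden sign, i.e.\ off the line $\R\rho_0$, hence not in $\rho(\tilde f)$.

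The main obstacle, and the step I would spend the most care on, is making the escape-implies-transverse-self-intersection argument rigorous: one must arrange the geometry of the transverse lines $\alpha_\pm$ from Lemma~\ref{lemacurvas} (whose translates $v_\pm$ have $\langle v_\pm,\rho_0\rangle>0$ and opposite signs of $\langle v_\pm,\rho_0^\perp\rangle$) so that an unbounded excursion of $\g$ to one side of $\R\rho_0$, together with the recurrence of $\g$ and the existence of admissible returning arcs with the opposite perpendicular displacement, places $[\g]$ simultaneously in $l$ and $r$ of an appropriate translate of $\alpha_\pm$, with the auxiliary boundedness hypothesis $J^*$ of Proposition~\ref{proprl} verified using the uniform diameter bound $L_0$. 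I expect the bookkeeping of which translate of which $\alpha_\pm$ to use, and checking that the resulting transverse intersection is genuinely $\F$-transverse and not merely a topological crossing, to be the delicate part; everything after that is a clean invocation of Proposition~\ref{propadm} and Theorem~\ref{teoauto}, and the passage from the iterates to all of $[\g]$ is routine given Lemma~\ref{lemasobra}.
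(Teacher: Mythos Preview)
Your reduction is correct: it suffices to show that all but finitely many iterates $\tilde f^n(\tilde z_0)$ lie in the cone $C^{\rho_0}_{\tilde z_0}(v)$, after which the diameter bound $L_0$ from Lemma~\ref{lemasobra} handles the arcs. But the second half of your argument is aimed at the wrong target. You propose to derive a contradiction from ``infinitely many iterates escape the cone'' via recurrence, Proposition~\ref{proppqeps}, Proposition~\ref{proprl}, Proposition~\ref{propadm} and Theorem~\ref{teoauto}. None of this machinery is needed: the fact that only finitely many iterates can lie outside the cone is an immediate consequence of the definition of the rotation vector. Indeed, set $\varepsilon=\|\langle\rho_0,v\rangle v-\rho_0\|>0$; this is exactly the distance from $\rho_0$ to each boundary line $r_v,r_{v_s}$ of the cone, measured at the point $\tilde z_0+\rho_0$. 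Since $(\tilde f^n(\tilde z_0)-\tilde z_0)/n\to\rho_0$ (and likewise for $n\to-\infty$), there is $n_0$ such that $\|\tilde f^n(\tilde z_0)-\tilde z_0-n\rho_0\|<|n|\varepsilon$ for $|n|\ge n_0$. But $|n|\varepsilon$ is precisely the distance from $\tilde z_0+n\rho_0$ to $\partial C^{\rho_0}_{\tilde z_0}(v)$, so $\tilde f^n(\tilde z_0)\in C^{\rho_0}_{\tilde z_0}(v)$ for all $|n|\ge n_0$. The remaining finitely many iterates are absorbed into $B(L_1,\tilde z_0)$ for a suitable $L_1$. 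This is exactly the paper's proof.

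Your proposed route also has a structural problem beyond being overcomplicated. The step you flag as ``the delicate part''---verifying the boundedness of $J^*$ in Proposition~\ref{proprl}---is, in the paper, handled by Lemma~\ref{lemacompacto}, whose proof \emph{uses} the present lemma. So your plan risks circularity: the geometric control on $[\g]$ that lets one bound which times $t$ of $\g$ meet a leaf through a given compact set is precisely what Lemma~\ref{lemacone} provides. Trying to bootstrap that control out of the forcing results instead of out of the one-line rotation-vector estimate is both unnecessary and, as you yourself suspected, where the argument would break.
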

\begin{proof}
First, let us consider the straight lines $r_v$ and $r_{v_s}$, as in Definition \ref{defcone}. Denoting by $d(r_v,x)$ the distance between the straight line $r_v$ and the point $\tilde x$, we have that $d_n=d(r_v, n\rho_0+\tilde{z}_0)=d(r_{v_s},n\rho_0+\tilde{z}_0)=n||\langle\rho_0,v\rangle v-\rho_0||$, for $n\in\Z$. Note that $B(d_n,n\rho_0+\tilde{z}_0)\subset C_{\tilde{z}_0}^{\rho_0}(v)$.

We have that $\lim_{n\to\infty}\frac{\tilde{f}^n(\tilde{z}_0)-\tilde{z}_0}{n}=\rho_0$. So, making $\varepsilon=||\langle\rho_0,v\rangle v-\rho_0||$, there is $n_1>0$ such that
\begin{align*}
d(\tilde{f}^n(\tilde{z}_0),n\rho_0+\tilde{z}_0)&=||\tilde{f}^n(\tilde{z}_0)-\tilde{z}_0-n\rho_0||\\ 
&< n\varepsilon=n||\langle\rho_0,v\rangle v-\rho_0||=d(r_v, n\rho_0+\tilde{z}_0), \quad\forall n\geq n_1
\end{align*}
Proceeding analogously for $\tilde{f}^{-1}$, we obtain 
\begin{align*}
d(\tilde{f}^{-n}(\tilde{z}_0),-n\rho_0+\tilde{z}_0)&=||\tilde{f}^{-n}(\tilde{z}_0)-\tilde{z}_0-n(-\rho_0)||\\
&< n\varepsilon=n||\langle\rho_0,v\rangle v-\rho_0||=d(r_v, n\rho_0+\tilde{z}_0), \quad\forall n\geq n_2.
\end{align*}
Therefore, setting $n_0=\max\{n_1,n_2\}$, we have that $d(\tilde{f}^{n}(\tilde{z}_0),n\rho_0+\tilde{z}_0)<d_n=d(r_v, n\rho_0+\tilde{z}_0)$, and since $B(d_n,n\rho_0+\tilde{z}_0)\subset C_{\tilde{z}_0}^{\rho_0}(v)$, we have $\tilde{f}^n(\tilde{z}_0)\in C_{\tilde{z}_0}^{\rho_0}(v)$, for $|n|\geq n_0$ (see Figure \ref{figlemacone}).

\begin{figure} 
\centering
\includegraphics[scale = 0.5]{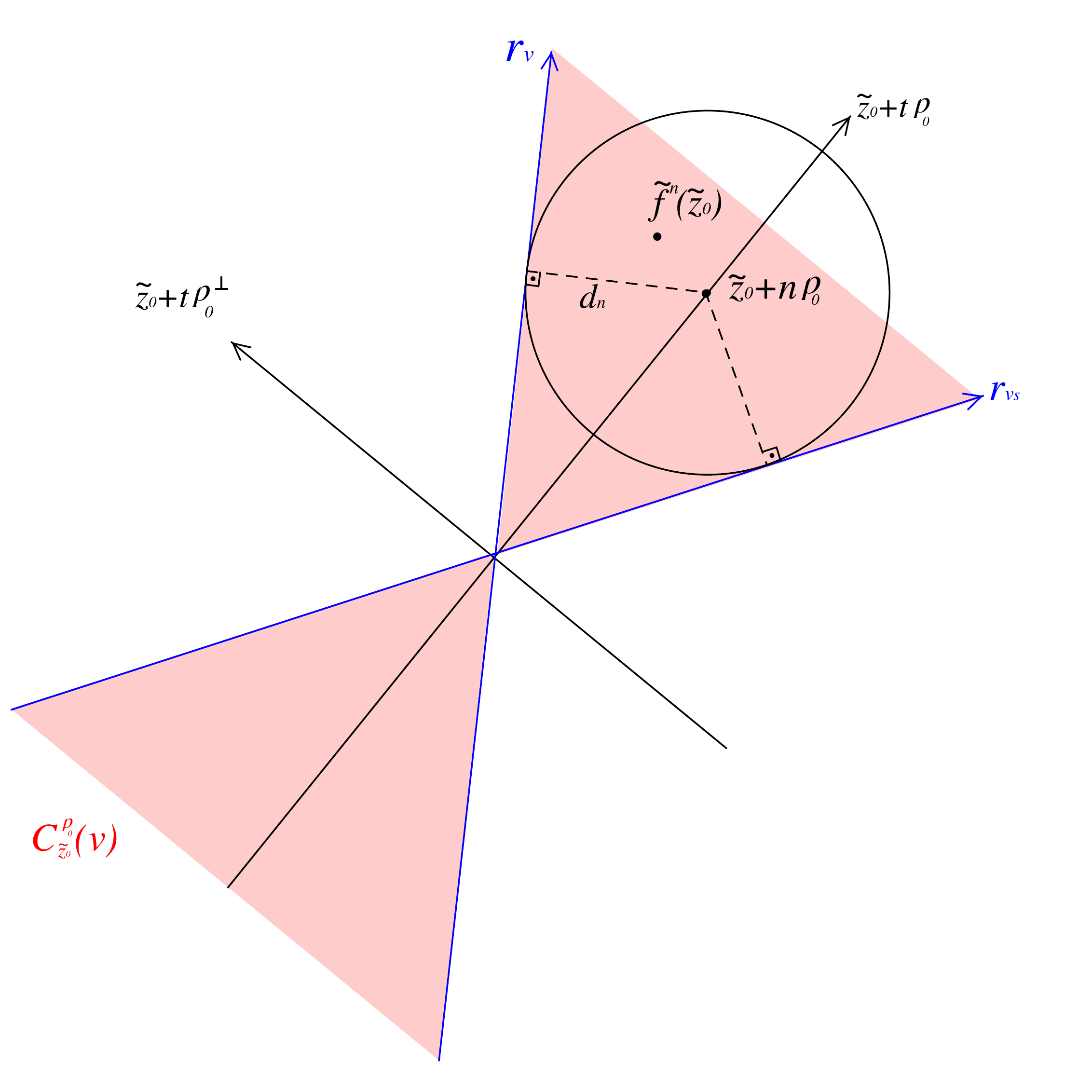}
\caption{Illustration of $\tilde{f}^n(\tilde{z}_0)\in C_{\tilde{z}_0}^{\rho_0}(v)$}
\label{figlemacone}
\end{figure}

By construction, we have that the diameter of the segment $\g$ between  $\tilde{f}^n(\tilde{z}_0)$ and $\tilde{f}^{n+1}(\tilde{z}_0)$ is smaller than $L_0$, so we have that for $|n|\geq n_0$ such segment is contained in $B(L_0,C_{\tilde{z}_0}^{\rho_0}(v))$. 

For $n$ such that $|n|<n_0$, $\tilde{f}^n(\tilde{z}_0)$ may be out of the cone, but since such points exist only in finite quantity, we have that there is $L'>0$ such that $||\tilde{f}^n(\tilde{z}_0)-\tilde{z}_0||<L'$, for $|n|<n_0$. Again, since each segment $\g$ has diameter bounded by $L_0$, we have that the segments of $\g$ between $\tilde{f}^n(\tilde{z}_0)$ and $\tilde{f}^{n+1}(\tilde{z}_0)$, with $|n|<n_0$, are contained in $B(L_1,\tilde{z}_0)$, where $L_1=L'+L_0$.
\end{proof}

\begin{lema}\label{lemaauto}
If $\tilde\alpha:[a,b]\to\R^2$ is an admissible path for $\tilde f$, there is no $w\in\Z^2_*$ such that $\tilde\alpha\pitchfork_{\tilde{\F}}(\tilde\alpha+w)$.
\end{lema}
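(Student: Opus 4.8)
The plan is to argue by contradiction: from a hypothetical self-intersection $\tilde\alpha\pitchfork_{\tilde\F}(\tilde\alpha+w)$ with $w\in\Z^2_*$ I will produce periodic points of $f$ carrying two distinct rational rotation vectors, which is impossible because $\rho(\tilde f)$ is a segment of irrational slope. The engine is the self-intersection theorem, Theorem \ref{teoauto}.

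First I would push the configuration down to $\T$. If $\tilde\alpha:[a,b]\to\R^2$ is admissible of order $r$ for $\tilde f$, then $\gamma:=\pi\circ\tilde\alpha$ is admissible of order $r$ for $f$; and since $\tilde\F$ is $\Z^2$-invariant and $w\neq 0$, the intersection $\tilde\alpha\pitchfork_{\tilde\F}(\tilde\alpha+w)$ projects to a transverse self-intersection of $\gamma$ at a point $\gamma(s)=\gamma(t)$ with $s<t$ (the two branches are genuinely distinct on $\T$, since the corresponding lifted points differ by $w\neq 0$). We are then in position to apply Theorem \ref{teoauto} on the surface $M=\T$ to the admissible path $\gamma$. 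The resulting covering transformation $T$ of $\widetilde{\textrm{dom}}(I)\to\textrm{dom}(I)$ can be chosen so that its image under the natural map $\pi_1(\textrm{dom}(I))\to\pi_1(\T)=\Z^2$ equals $w$ (up to sign): the loop realizing the self-intersection lifts, in $\R^2$, to an arc joining a point of $[\tilde\alpha]$ to its translate by $w$.

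Next I would transport the conclusion of Theorem \ref{teoauto}(i) back to $\R^2$. Fix a covering map $\kappa:\widetilde{\textrm{dom}}(I)\to\textrm{dom}(\tilde I)=\pi^{-1}(\textrm{dom}(I))\subset\R^2$ and let $\bar f$ denote the lift of $f|_{\textrm{dom}(I)}$ to $\widetilde{\textrm{dom}}(I)$ appearing in that theorem. Standard covering-space arguments show that $\kappa$ intertwines $T$ with the translation by $w$ and intertwines $\bar f$ with a lift of $f$ to $\R^2$ of the form $\tilde f_1=\tilde f+v_1$, where $v_1\in\Z^2$ is fixed (it depends only on $\kappa$ and $\bar f$, not on the data below). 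Now, for every $p/q\in(0,1]$ written in irreducible form, Theorem \ref{teoauto}(i) provides $\tilde z\in\widetilde{\textrm{dom}}(I)$ with $\bar f^{\,qr}(\tilde z)=T^p(\tilde z)$; setting $z^\ast=\kappa(\tilde z)$ this yields $\tilde f_1^{\,qr}(z^\ast)=z^\ast+pw$, hence $\tilde f^{\,qr}(z^\ast)=z^\ast+pw-qr\,v_1$. Consequently $\pi(z^\ast)$ is a periodic point of $f$ whose rotation vector with respect to $\tilde f$ equals $\tfrac{p}{qr}\,w-v_1$.

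Finally I reach the contradiction. The vector $\tfrac{p}{qr}w-v_1$ is a rational point of $\rho(\tilde f)=\{t\rho_0 : 0\le t\le 1\}$; since $\tan(\rho_0)\notin\Q$, this segment contains at most one rational point, so necessarily $\tfrac{p}{qr}w-v_1=0$, i.e. $v_1=\tfrac{p}{qr}w$, and this must hold simultaneously for all irreducible $p/q\in(0,1]$. Taking $p/q=1$ and $p/q=\tfrac12$ gives $\tfrac{w}{r}=v_1=\tfrac{w}{2r}$, whence $w=0$, contradicting $w\in\Z^2_*$. The only genuinely delicate point is the covering-space bookkeeping of the third step — checking that the homotopy class attached to $T$ projects to $w$ and that the auxiliary lift $\tilde f_1$ is the same for every $p/q$ — and I expect that to be the main obstacle; everything else is a direct invocation of Theorem \ref{teoauto}. (One could alternatively use part (ii) of that theorem with two distinct irrational $\lambda$'s in $[0,1/r]$ and the same rotation-set argument, avoiding periodic points altogether.)
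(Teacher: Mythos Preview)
Your argument is correct and follows essentially the same route as the paper: invoke Theorem~\ref{teoauto} to manufacture periodic points with rotation vectors $\tfrac{p}{qr}w$ (up to your shift $v_1$) and contradict the fact that $\rho(\tilde f)\cap\Q^2=\{(0,0)\}$. The only difference is that the paper dispenses with your bookkeeping vector $v_1$ entirely: since every lift in sight is the time-$1$ map of the corresponding lift of the isotopy $I$ (to $\R^2$ and then to $\widetilde{\textrm{dom}}(I)$), the projection $\kappa$ automatically intertwines the canonical lift $\bar f$ with $\tilde f$ itself, so $v_1=0$; then a single choice such as $p/q=1$ already yields the nonzero rational rotation vector $w/r$ and the contradiction, without juggling two values of $p/q$.
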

\begin{proof}
Suppose there are $w\in\Z^2_*$ and $\tilde\alpha:[a,b]\to\R^2$ a path $r$-admissible such that $\alpha\pitchfork_{\tilde{\F}}(\tilde\alpha+w)$. Thus, we have by Theorem \ref{teoauto} that given $p/q\in(0,1]$ written in an irreducible way, $f$ will have a periodic point with rotation vector equal to $\frac{p}{qr}\cdot w$, which is an absurd, because $\rho(\tilde f)\cap\Q^2=\{(0,0)\}$.
\end{proof}

\begin{lema}\label{lemaautogamma}
There is no $w\in\Z^2$ such that $\g\pitchfork_{\tilde{\F}}(\g+w)$.
\end{lema}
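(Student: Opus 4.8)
The plan is, given a hypothetical $w\in\Z^2$ with $\g\pitchfork_{\tilde\F}(\g+w)$, to transfer the crossing to a long \emph{finite} admissible sub-path of $\g$ and then invoke Lemma~\ref{lemaauto}. An $\tilde\F$-transverse intersection is realised on compact parameter intervals, so there are $[a_1,b_1],[a_2,b_2]\subset\R$ with $\g|_{[a_1,b_1]}\pitchfork_{\tilde\F}\big(\g|_{[a_2,b_2]}+w\big)$. Choose integers $N_-\le\min\{a_1,a_2\}$ and $N_+\ge\max\{b_1,b_2\}$ and set $\tilde\alpha:=\g|_{[N_-,N_+]}$. Since $\g\in\tilde I^{\Z}_{\tilde\F}(\tilde z_0)$ with $\g(n)=\tilde f^{\,n}(\tilde z_0)$ for all $n$, the path $\tilde\alpha$ is, up to reparametrization, a representative of $\tilde I^{\,N_+-N_-}_{\tilde\F}\big(\tilde f^{\,N_-}(\tilde z_0)\big)$, hence admissible of order $N_+-N_-$; and because $[a_i,b_i]\subseteq[N_-,N_+]$, the very same configuration of parameters witnesses $\tilde\alpha\pitchfork_{\tilde\F}(\tilde\alpha+w)$. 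If $w\neq 0$ this contradicts Lemma~\ref{lemaauto}, so the entire difficulty is concentrated in the case $w=0$.

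For $w=0$ the path $\g$ itself would possess a transversal self-intersection, which — after shrinking the parameter windows so that their domains become disjoint — can be written as $\g|_{[a_1,b_1]}\pitchfork_{\tilde\F}\g|_{[a_2,b_2]}$ with $b_1\le a_2$, occurring at some leaf $\phi^{*}$. The idea is to copy this self-intersection far along $\g$ so as to manufacture a genuine crossing between $\g$ and one of its \emph{nontrivial} integer translates. As $z_0$ is recurrent, Corollary~\ref{correcorrente} gives that $\gamma_0:=\pi\circ\g=I^{\Z}_{\F}(z_0)$ is $\F$-recurrent; applying Definition~\ref{defrecorrente} with $K=[a_1,b_2]$ and a base parameter taken as large as we wish, we obtain a copy $\gamma_0|_{[c_1,c_2]}\sim_{\F}\gamma_0|_{[a_1,b_2]}$ with $c_1$ arbitrarily large, and lifting this equivalence yields $\g|_{[c_1,c_2]}\sim_{\tilde\F}\g|_{[a_1,b_2]}+u$ for a uniquely determined $u\in\Z^2$. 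The decisive point — and the main obstacle of the proof — is that $u\neq 0$ once the copy is placed far enough forward. This is where one uses that $\g$ is directed by $\rho_0/\Vert\rho_0\Vert$: indeed $\g(n)=\tilde f^{\,n}(\tilde z_0)$, the rotation vector $\rho(\tilde f,z_0)=\rho_0$ is nonzero, and the arcs of $\g$ between consecutive iterates have diameter less than $L_0$, so $\g(c_1)$ escapes to infinity in the direction $\rho_0$ as $c_1\to+\infty$; were $u=0$, the leaf $\phi^{*}$ would be crossed by $\g$ at arbitrarily large parameters, forcing $[\phi^{*}]$ to run off to infinity in the direction $\rho_0$ while being crossed there transversally by a path travelling in that same direction, which is incompatible with $\g$ being directed.

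Granting $u\neq 0$, the argument closes quickly. The sub-arc of $\g|_{[c_1,c_2]}$ matched with $\g|_{[a_2,b_2]}$ under the equivalence satisfies $\g|_{[a_2,b_2]}\sim_{\tilde\F}(\g-u)|_{[c_1',c_2']}$ for a suitable $[c_1',c_2']\subseteq[c_1,c_2]$ (translation-equivariance of $\sim_{\tilde\F}$, since $\tilde\F$ is $\Z^2$-invariant), and $[a_1,b_1]$ and $[c_1',c_2']$ are disjoint. Since an $\tilde\F$-transverse intersection is preserved when one of the two arcs is replaced by an $\tilde\F$-equivalent arc, from $\g|_{[a_1,b_1]}\pitchfork_{\tilde\F}\g|_{[a_2,b_2]}$ we get $\g|_{[a_1,b_1]}\pitchfork_{\tilde\F}(\g-u)|_{[c_1',c_2']}$, i.e.\ $\g\pitchfork_{\tilde\F}(\g+w')$ with $w'=-u\neq 0$. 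Feeding this into the reduction of the first paragraph contradicts Lemma~\ref{lemaauto}, so no $w\in\Z^2$ with $\g\pitchfork_{\tilde\F}(\g+w)$ can exist. Apart from the (clean) reduction and the (immediate) appeal to Lemma~\ref{lemaauto}, the only genuinely delicate step is the verification that the translation vector produced by recurrence does not vanish.
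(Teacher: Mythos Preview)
Your strategy coincides with the paper's: the case $w\neq 0$ is reduced to a finite admissible sub-arc and handed to Lemma~\ref{lemaauto}, and for $w=0$ one uses the recurrence of $z_0$ to translate the self-intersection by a nonzero integer vector and again invoke Lemma~\ref{lemaauto}. Your reduction in the first paragraph is clean and correct.

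The gap is precisely where you flag ``the main obstacle'': your justification that $u\neq 0$ is not a proof. You argue that if $u=0$ then $\phi^{*}$ is crossed by $\g$ at arbitrarily large parameters, hence $[\phi^{*}]$ is unbounded in the direction $\rho_0$, and that this is ``incompatible with $\g$ being directed''. But ``directed'' only says $\g(t)/\|\g(t)\|\to\rho_0/\|\rho_0\|$; nothing prevents a single leaf from being unbounded in the direction $\rho_0$ and from being met by $\g$ at unbounded parameters. At this stage one does not yet know that $\g$ meets each leaf at most once (that is Lemma~\ref{lemalinhaunica}, whose proof \emph{uses} the present lemma), so you cannot rule this out geometrically. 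By passing through the abstract $\F$-recurrence of Corollary~\ref{correcorrente} you have also severed the link between $u$ and the dynamics, which is exactly what is needed to force $u\neq 0$.

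The fix is the paper's argument, and you already have the key ingredient in hand. Work with the recurrence of $z_0$ directly rather than via $\F$-recurrence: take $n_k\to\infty$ and $w_k\in\Z^2$ with $\tilde f^{\,n_k}(\tilde z_0)-w_k\to\tilde z_0$. Since $\rho(\tilde f,z_0)=\rho_0\neq 0$ one has $w_k/n_k\to\rho_0$, so $w_k\neq 0$ for all large $k$. Then Lemma~\ref{lemaestabilidade} gives that $\g|_{[-N,N]}$ (with $[a_1,b_2]\subset(-N,N)$) is $\tilde\F$-equivalent to a sub-arc of $(\g-w_k)|_{[n_k-N-1,\,n_k+N+1]}$, and your concluding paragraph goes through verbatim with $u=w_k$. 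The nonvanishing of the translation is an arithmetical consequence of $\rho_0\neq 0$, not a geometric feature of directedness.
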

\begin{proof}
If $w\neq 0$, the result is a direct consequence of Lemma \ref{lemaauto}. For $w=0$, let us suppose by contradiction that $\g$ has a transverse self-intersection. So, there are intervals $I,J\subset\R$ such that $\g|_I\pitchfork_{\tilde{\F}}\g|_J$. We can suppose that $I\subset(-N,N)$, for some $N\in\N$. Now let us note that since $z_0$ is recurrent, there are  $n_k\in\N$ and $w_k\in\Z^2$ such that $\tilde{f}^{n_k}(\tilde z_0)\to \tilde z_0+w_k$ or, equivalently, $\tilde{f}^{n_k}(\tilde z_0)-w_k\to\tilde z_0$, and then $\tilde{f}^{-N+n_k}(\tilde z_0)-w_k\to\tilde{f}^{-N}(\tilde z_0)$. Also, since $\rho(f,z_0)=\rho_0$, we have $w_k/n_k\to\rho_0$ and then there is $k_0$ such that $w_k\neq 0$ if $k>k_0$. 

Note that, by the way $\g$ has been parameterized, we have that $\g(-N)=\tilde{f}^{-N}(\tilde z_0)$ and $\g(N)=\tilde{f}^{N}(\tilde z_0)$, and then $\g\vert_{[-N,N]}=\tilde{I}_{\tilde\F}^{2N}(\tilde{f}^{-N}(\tilde z_0))$. Since $\tilde{f}^{-N+n_k}(\tilde z_0)-w_k\to\tilde{f}^{-N}(\tilde z_0)$, by Lemma \ref{lemaestabilidade} we have that if $k'$ is large enough, $\tilde{I}_{\tilde\F}^{2N}(\tilde{f}^{-N}(\tilde z_0))$ is equivalent to a sub-path of $\tilde{I}_{\tilde\F}^{2N+2}(\tilde{f}^{-1}(\tilde{f}^{-N+n_{k'}}(\tilde z_0)-w_{k'}))$. But 
\begin{align*}
\tilde{I}_{\tilde\F}^{2N+2}(\tilde{f}^{-1}(\tilde{f}^{-N+n_{k'}}(\tilde z_0)-w_{k'}))&=\tilde{I}_{\tilde\F}^{2N+2}(\tilde{f}^{-N-1+n_{k'}}(\tilde z_0)-w_{k'})\\
&=(\g-w_{k'})|_{[-N-1+n_{k'},N+1+n_{k'}]}
\end{align*}
and we can also assume that $k'>k_0$. So, there is $I'\subset[-N-1+n_{k'},N+1+n_{k'}]$ such that $(\g-w_{k'})|_{I'}\sim_{\tilde\F}\g|_{[-N,N]}$. Since $I\subset[-N,N]$, there is $I''\subset I'$ such that $(\g-w_{k'})|_{I''}\sim_{\tilde\F}\g|_I$, but  $\g|_I\pitchfork_{\tilde{\F}}\g|_J$, therefore $(\g-w_{k'})|_{I'}\pitchfork_{\tilde{\F}}\g|_J$, and given that $k'>k_0$, we have $w_{k'}\neq 0$, and thus we get a contradiction, from Lemma  \ref{lemaauto}.
\end{proof}

\begin{lema}
\label{lemalinhaunica}
$\g$ intersects each leaf at most once.
\end{lema}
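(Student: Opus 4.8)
The plan is to argue by contradiction: suppose $\g$ meets some leaf $\phi$ of $\tilde\F$ at two distinct parameters. First I would record that a transverse path crosses a fixed leaf only at isolated parameters (in a flow-box chart around a crossing point the first coordinate of the path is strictly increasing, so the path meets the local vertical leaf exactly once), so the crossing parameters form a discrete set and I may pick consecutive ones $t_1<t_2$ with $\g((t_1,t_2))\cap[\phi]=\emptyset$.

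The key step is to pass to the universal covering $\widehat{\mathrm{dom}}(\tilde I)$ of $\mathrm{dom}(\tilde I)$, where $\hat\F$ is nonsingular and, $\hat f$ being a Brouwer homeomorphism, its leaves are Brouwer lines, hence properly embedded topological lines. As in the Poincar\'e--Bendixson remark recalled after Definition \ref{defequivalence} (whose argument applies verbatim to any transverse path), a transverse path there crosses such a leaf-line from its right to its left and so meets it at most once. Fixing a lift $\hat\g$ of $\g$, the leaf $\hat\psi$ of $\hat\F$ through $\hat\g(t_1)$ is therefore different from the one through $\hat\g(t_2)$; being another lift of $\phi$, the latter is $T(\hat\psi)$ for some non-trivial covering transformation $T$ of $\widehat{\mathrm{dom}}(\tilde I)\to\mathrm{dom}(\tilde I)$. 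Using that $T$ preserves orientation and commutes with $\hat f$, and that $\hat\g(t_2)\in L(\hat\psi)$ (since $t_2>t_1$ and $\hat\g$ crosses $\hat\psi$ only at $t_1$), I would check that $T(\hat\psi)\subset L(\hat\psi)$ and, inductively, that the leaves $\{T^{k}(\hat\psi)\}_{k\in\Z}$ are pairwise disjoint, strictly ordered by $L(T^{k+1}(\hat\psi))\subsetneq L(T^{k}(\hat\psi))$, and cyclically shifted by $T$ — exactly the annular configuration behind Theorem \ref{teoauto}.

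It then remains to convert this configuration into a contradiction with Lemma \ref{lemaautogamma}. Here I would use that $\g$ is directed by $\rho_0$ with $\tan(\rho_0)\notin\Q$, so that Lemma \ref{lemacone} confines the two ends of $\g$ to an arbitrarily thin cone about $\R\rho_0+\tilde z_0$, while all the $T^{k}(\hat\psi)$ project in $\R^2$ to the single leaf $\phi$; combining this with the $\F$-recurrence of $I^\Z_\F(z_0)$ (Corollary \ref{correcorrente}), the stability lemma (Lemma \ref{lemaestabilidade}) and the criterion of Proposition \ref{proprl}, one argues, in the spirit of the proof of Lemma \ref{lemaautogamma}, that $\g$ must meet a non-trivial translate $\g+w$ ($w\in\Z^2_*$) $\tilde\F$-transversally; equivalently, the $\hat f$-invariant annulus delimited by $\{T^{k}(\hat\psi)\}_k$ forces $\rho(\tilde f)$ onto a line of rational slope. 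Either conclusion is absurd — the first by Lemma \ref{lemaautogamma}, the second because $\rho(\tilde f)=\{t\rho_0\mid 0\leq t\leq 1\}$ is a non-degenerate segment of irrational slope with $\rho(\tilde f)\cap\Q^2=\{(0,0)\}$ — so no leaf of $\tilde\F$ is met by $\g$ twice.

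I expect this last step to be the main obstacle. The reduction to the covering transformation $T$ and the ordered family $\{T^{k}(\hat\psi)\}$ is clean, but deducing the forbidden transverse intersection (or the rational constraint on $\rho(\tilde f)$) requires carefully controlling how $\g$ and its $\Z^2$-translates sit relative to these leaves, which is delicate since the leaves of $\tilde\F$ need not be properly embedded in $\R^2$ and the deck group of $\widehat{\mathrm{dom}}(\tilde I)\to\mathrm{dom}(\tilde I)$ is infinitely generated; the combinatorics has to be carried out upstairs, where leaves are genuine Brouwer lines, and only then transported back to the plane and coupled with the cone and recurrence estimates.
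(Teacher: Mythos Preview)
Your proposal has a genuine gap in the final step, and you correctly identify it yourself. The reduction to a nontrivial deck transformation $T$ of $\widehat{\mathrm{dom}}(\tilde I)\to\mathrm{dom}(\tilde I)$ and the ordered family $\{T^k(\hat\psi)\}_{k\in\Z}$ is fine, but neither of your two proposed conclusions follows from it. For option (b), note that $\mathrm{dom}(\tilde I)$ is an open subset of $\R^2$, so $T$ corresponds to a loop encircling some singularities of $\tilde\F$; this loop is null-homotopic in $\R^2$ and carries no information about the $\Z^2$-lattice or about the slope of $\rho(\tilde f)$. The quotient annulus $\widehat{\mathrm{dom}}(\tilde I)/T$ is an abstract annular cover of $\mathrm{dom}(\tilde I)$ unrelated to the torus structure, so there is no mechanism here forcing a rational direction. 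For option (a), you only gesture at ``the spirit of Lemma~\ref{lemaautogamma}'', but that lemma \emph{starts} from a transverse self-intersection; what you need is a way to produce one, and the cone/recurrence ingredients you list do not by themselves yield a $\tilde\F$-transverse intersection between $\g$ and some $\g+w$ from the annular configuration upstairs.

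The paper takes a different route that avoids lifting to $\widehat{\mathrm{dom}}(\tilde I)$ altogether. If $\phi_{\g(t')}=\phi_{\g(t'')}$, then $\g|_{[t',t'']}$ is $\tilde\F$-equivalent to a closed transverse curve, which contains a \emph{simple} closed transverse subloop $\Gamma_0$ living in $\R^2$. One then invokes Proposition~20 of \cite{calvez2018topological}: the union $A_{\Gamma_0}$ of leaves through $\Gamma_0$ is an open annulus, $\g^{-1}(A_{\Gamma_0})$ is an interval, and if $\g$ enters and leaves $A_{\Gamma_0}$ from unbounded components of $\R^2\setminus A_{\Gamma_0}$ then $\g$ has a transverse self-intersection. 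The remaining work is purely in $\R^2$: since $\Gamma_0$ is a simple closed transverse curve, every leaf through it has (say) its $\omega$-limit in the bounded complementary disk; by $\Z^2$-periodicity of $\tilde\F$ the same holds for $\Gamma_0+w$, so leaves in $A_{\Gamma_0}$ are disjoint from far integer translates $A_{\Gamma_0}+w$. Recurrence of $z_0$ with rotation vector $\rho_0\neq 0$ then produces parameters $t_k\to+\infty$ and $s_k\to-\infty$ with $\g(t_k),\g(s_k)$ in such translates, hence in unbounded components of $\R^2\setminus A_{\Gamma_0}$, and Proposition~20 forces the transverse self-intersection that contradicts Lemma~\ref{lemaautogamma}. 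The key object you are missing is this simple closed transverse loop $\Gamma_0$ in the plane and the structural result (Proposition~20) governing how a transverse path without self-intersection can traverse the leaf-annulus it spans.
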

\begin{proof}
Suppose, by contradiction, that there are $t'<t''$ such that $\phi_{\g(t')}=\phi_{\g(t'')}$. So we have that $\g|_{[t',t'']}$ is $\tilde\F$-equivalent to a transverse closed curve $\Gamma$. But since $\Gamma$ has some sub-path $\Gamma_0:J\to\R^2$ which is transverse, closed and simple, we have that $\g$ has a sub-path $\tilde\F$-equivalent to $\Gamma_0$. 
Since $\g$ does not have  transverse self-intersection, it follows from Proposition 20 of \cite{calvez2018topological} that:
\begin{enumerate}[(i)]
\item $\bigcup_{s\in J} [\phi_{\Gamma_0(s)}]=A_{\Gamma_0}$ is an open topological annulus;
\item $\{t\in\R\mid\g(t)\in A_{\Gamma_0}\}$ is an interval, which we denote by $I=(a,b)$, not necessarily bounded;
\item if $-\infty<a<b<+\infty$, then $\g(t')$ and $\g(t'')$ can not both belong to unbounded connected components of $\R^2\setminus A_{\Gamma_0}$.
\end{enumerate}
Since $z_0$ is recurrent and has rotation vector $\rho$ we have, as in the proof of the Lemma \ref{lemaautogamma}, that there are $t_k\to+\infty$, $w_k^+\in\Z^2$, with $||w_k^+||\to+\infty$ and $s_k\to-\infty$, $v_k^-\in\Z^2$, with $||v_k^-||\to+\infty$ such that $\g(t_k)\in A_{\Gamma_0}+w_k^+$ and $\g(s_k)\in A_{\Gamma_0}+v_k^-$.
Firstly, notice that, as $\Gamma$ is a transverse, closed and simple curve, every leaf $\phi$ that intersects $\Gamma_0$ it does so in only one point. From this it follows that either every leaf that intersects $\Gamma$ has its $\omega$-limit contained in the bounded connected component of its complement, or any leaf intersecting $\Gamma_0$ has its $\alpha$-limit contained in the bounded connected component of its complement. We will assume, without loss of generality, that the first situation occurs, and the second case is treated in the same way. But, since the foliation is invariant by integer translations, it follows that an analogous property holds for the leafs intersecting $\Gamma_0+w$, for integer vectors $w$. So, we have that, if $[\phi]\subset A_{\Gamma_0}$ and $[\Gamma_0+w]\cap[\Gamma_0]=\emptyset$, then we have that $[\phi]\cap A_{\Gamma_0}+w=\emptyset$, because the $\omega$-limit of a leaf at the intersection would be contained in two disjoint sets.

So, for $k$ large enough, since $[\Gamma_0+w_k^+]\cap[\Gamma_0]=\emptyset$, we have that $\phi_{\g(t_k)}$ is contained in an unbounded connected component of $\R^2\setminus A_{\Gamma_0}$, and the same holds for $\phi_{\g(s_k)}$. Therefore, by Proposition 20 of \cite{calvez2018topological}, we have that $\g$ has a transverse self-intersection, which is an absurd.
\end{proof}

\begin{corolario}
\label{corlinha}
$\g$ is a line.
\end{corolario}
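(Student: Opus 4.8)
The plan is to show that $\tilde\gamma_0$ is a line, i.e., a proper injective continuous map $\R\to\R^2$, by combining the absence of transverse self-intersections (Lemma~\ref{lemaautogamma}), the fact that $\tilde\gamma_0$ meets each leaf at most once (Lemma~\ref{lemalinhaunica}), and the cone-confinement estimate of Lemma~\ref{lemacone}. Since $\tilde\gamma_0$ is already continuous, there are two things to establish: that it is proper (i.e., $\|\tilde\gamma_0(t)\|\to\infty$ as $t\to\pm\infty$), and that it is injective.

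For properness, I would argue as follows. Suppose for contradiction that $\tilde\gamma_0$ is not proper, so there is a sequence $t_k\to+\infty$ (the case $t_k\to-\infty$ is symmetric) with $\tilde\gamma_0(t_k)$ bounded. Pick $n_k\in\Z$ with $n_k\le t_k<n_k+1$; since each segment of $\tilde\gamma_0$ between $\tilde f^{n}(\tilde z_0)$ and $\tilde f^{n+1}(\tilde z_0)$ has diameter at most $L_0$ by the construction following Lemma~\ref{lemasobra}, the points $\tilde f^{n_k}(\tilde z_0)=\tilde\gamma_0(n_k)$ stay bounded as well, contradicting $\rho(\tilde f,z_0)=\rho_0\neq 0$, which forces $\|\tilde f^{n}(\tilde z_0)-\tilde z_0\|\to\infty$. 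Hence $\tilde\gamma_0$ is proper.

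For injectivity, suppose $\tilde\gamma_0(t')=\tilde\gamma_0(t'')$ with $t'<t''$. Then $\tilde\gamma_0|_{[t',t'']}$ is a transverse loop. By Lemma~\ref{lemaautogamma}, $\tilde\gamma_0$ has no transverse self-intersection (taking $w=0$), and by Lemma~\ref{lemalinhaunica}, $\tilde\gamma_0$ meets each leaf at most once; but a nonconstant transverse loop must revisit at least one leaf — indeed $\phi_{\tilde\gamma_0(t')}=\phi_{\tilde\gamma_0(t'')}$ since $\tilde\gamma_0(t')=\tilde\gamma_0(t'')$ — which already contradicts Lemma~\ref{lemalinhaunica} directly (the two endpoints of the loop lie on the same leaf, yet are two distinct parameter values both mapped into that leaf, unless the loop is entirely contained in that leaf, which is impossible for a transverse path that is not a point). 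Thus $\tilde\gamma_0$ is injective. Combining properness and injectivity with continuity gives that $\tilde\gamma_0$ is a line.

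I expect the injectivity argument to be the delicate point, because one must be careful about the degenerate possibility that $t'$ and $t''$ are endpoints of a trivial (point) loop versus a genuine loop, and about the precise way Lemma~\ref{lemalinhaunica} rules out a self-crossing versus a mere tangency; the cleanest route is to invoke Lemma~\ref{lemalinhaunica} to say that if $\tilde\gamma_0(t')=\tilde\gamma_0(t'')$ then in particular $\phi_{\tilde\gamma_0(t')}=\phi_{\tilde\gamma_0(t'')}$ forces $t'=t''$, since otherwise the leaf through that point would be met at two distinct times. All the heavy lifting has been done in the preceding lemmas, so this corollary should be short.
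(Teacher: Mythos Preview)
Your proposal is correct and follows essentially the same approach as the paper: injectivity comes directly from Lemma~\ref{lemalinhaunica} (a path meeting each leaf at most once is in particular injective), and properness comes from $\|\tilde f^{n}(\tilde z_0)\|\to\infty$ since $\rho(\tilde f,z_0)=\rho_0\neq 0$. Your detour through Lemma~\ref{lemaautogamma} and transverse loops is unnecessary---as you yourself note at the end, Lemma~\ref{lemalinhaunica} alone suffices for injectivity---and the cone Lemma~\ref{lemacone} is not needed here; the paper's proof is accordingly just two lines.
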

\begin{proof}
By  Lemma \ref{lemalinhaunica} we have that $\g$ is a simple path, and since $||\tilde{f}^n(\tilde z_0)||\to+\infty$  when $n\to\pm\infty$, we have the result.
\end{proof}

Therefore, since $\g$ is a line, the sets $L(\g)$, $R(\g)$, $l(\g)$ and $r(\g)$ are well defined.

\begin{lema}\label{lemailimitada}
Let $A$ be a connected component of $r(\g)$ (or $l(\g)$). Then $A$ is an unbounded set.
\end{lema}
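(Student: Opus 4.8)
Suppose, for a contradiction, that a connected component $A$ of $r(\g)$ is bounded (the case of $l(\g)$ is symmetric), and fix $R>0$ with $\overline A\subseteq B(R,\tilde z_0)$. The plan is to use the recurrence of $\g$ to make a suitable sub-path of $\g$ return near $A$, and then to apply the criterion of Proposition \ref{proprl} to produce a transverse self-intersection of $\g$, which is forbidden by Lemma \ref{lemaautogamma}. First I would record two preliminary facts. Since $\g$ is a line directed by $\rho_0/\|\rho_0\|$ (Corollary \ref{corlinha} and Lemma \ref{lemacone}), we have $\|\g(t)\|\to\infty$ as $t\to\pm\infty$, so there is $N\in\N$ with $\g(\R\setminus(-N,N))\subseteq\R^2\setminus B(2R,\tilde z_0)$; thus $A$, together with every leaf of $\tilde\F$ sufficiently close to it, is ``enclosed'' by the compact sub-path $\g|_{[-N,N]}$. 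Also, since $A$ is a nonempty bounded union of leaves of $\tilde\F$, every leaf of $A$ is bounded, hence (leaves being closed subsets of $\textrm{dom}(\tilde\F)$) accumulates on $\textrm{sing}(\tilde\F)$, so that $\overline A\cap\textrm{sing}(\tilde\F)\neq\emptyset$.

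Next, exactly as in the proof of Lemma \ref{lemaautogamma} (using that $z_0$ is recurrent with $\rho(\tilde f,z_0)=\rho_0$ and invoking Lemma \ref{lemaestabilidade}), one obtains integer vectors $w_k$ with $\|w_k\|\to\infty$ and $w_k/\|w_k\|\to\rho_0/\|\rho_0\|$, together with nondegenerate compact intervals $I_k\subseteq\R$ such that $\g|_{I_k}\sim_{\tilde\F}\g|_{[-N,N]}+w_k$. Since $\tilde\F$-equivalent paths cross exactly the same leaves and, by construction, $\g$ crosses no leaf of $A$, it follows that $\g$ crosses no leaf of $A+w_k$ either; combined with the fact that $\g|_{I_k}$ realizes the translated configuration $\g|_{[-N,N]}+w_k$, one deduces that for large $k$ the bounded set $A+w_k$, which is disjoint from $\overline A$, again lies on the foliated right of $\g$, and that the sub-path $\g|_{I_k}$ of $\g$ reproduces, next to $A+w_k$, the way $\g|_{[-N,N]}$ encloses $A$. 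In particular $\g|_{I_k}$ meets both $l(\g)$ and $r(\g)$. Moreover, since $\g$ meets each leaf at most once (Lemma \ref{lemalinhaunica}), the set $J^*=\{t\in\R\mid\phi_{\g(t)}\text{ crosses }\g|_{I_k}\}$ is bounded. Proposition \ref{proprl}, applied with $\gamma=\g$ and $\gamma'=\g|_{I_k}$, then yields intervals $J,J'$ with $\g|_{J'}\pitchfork_{\tilde\F}\g|_J$, i.e. a transverse self-intersection of $\g$, contradicting Lemma \ref{lemaautogamma}.

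The delicate point, which is where I would spend most of the effort, is the geometric claim of the previous paragraph: turning ``$A$ is a bounded component of $r(\g)$ enclosed by $\g|_{[-N,N]}$'' into the two hypotheses of Proposition \ref{proprl}, namely that the returning sub-path $\g|_{I_k}$ genuinely enters both the foliated left and the foliated right of $\g$, and that $J^*$ is bounded. Here the cone estimate of Lemma \ref{lemacone} is essential: it guarantees that, for $k$ large, $\g|_{I_k}$ (which sits within bounded distance of $\R\rho_0+\tilde z_0+w_k$) is far from $\overline A$ and from $\g|_{[-N,N]}$, so that the intersection detected by Proposition \ref{proprl} is an honest $\tilde\F$-transverse one and not a mere topological contact; the $\Z^2$-invariance of $\tilde\F$ and the periodicity of $\textrm{sing}(\tilde\F)$ are what make the translated configurations around $A+w_k$ exact copies of the one around $A$.
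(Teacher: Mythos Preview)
Your argument has a fatal gap at the crucial step. You want to apply Proposition \ref{proprl} with $\gamma=\g$ and $\gamma'=\g|_{I_k}$, and for this you claim that ``$\g|_{I_k}$ meets both $l(\g)$ and $r(\g)$''. But this is impossible by definition: $l(\g)$ and $r(\g)$ consist precisely of the leaves (and singularities) of $\tilde\F$ that are \emph{not} crossed by $\g$, so $[\g]\cap\bigl(l(\g)\cup r(\g)\bigr)=\emptyset$. Any sub-path of $\g$ lies on $[\g]$ and therefore never enters either of these sets. More generally, since $\g$ is a line (Corollary \ref{corlinha}), $[\g]$ is disjoint from $L(\g)\cup R(\g)$ as well, so there is no way to make $\gamma'$ a sub-path of $\g$ and still satisfy the hypotheses of Proposition \ref{proprl}. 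The intuition that $\g|_{I_k}$ ``reproduces the way $\g|_{[-N,N]}$ encloses $A$'' is not doing what you need: even if $A+w_k$ sits in $r(\g)$, that tells you nothing about $\g|_{I_k}$ entering $l(\g)$.

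The paper bypasses all of this with a one-line topological observation that you should consider instead. Since $\g$ is a transverse line meeting each leaf at most once (Lemma \ref{lemalinhaunica}), the saturated open set $U=\bigcup_{t\in\R}[\phi_{\g(t)}]$ is homeomorphic to $\R^2$ (it is a trivially foliated strip, the product of the transversal $\g$ with a leaf), hence simply connected. The components of $r(\g)$ and $l(\g)$ are exactly the connected components of $\R^2\setminus U$, and it is a standard fact that the complement of a simply connected open subset of $\R^2$ has only unbounded components. No recurrence, no cones, and no forcing are needed here.
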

\begin{proof}
By Lemma \ref{lemalinhaunica} , we have that $\g$ is a  line and  crosses each leaf at most once. Thus, the set $\bigcup_{t\in\R}[\phi_{\g(t)}]$ of leaves that pass through $\g$ is homeomorphic to $\R^2$ and therefore simply connected. So, all connected components of the complement of such set are unbounded.
\end{proof}

\begin{lema}\label{lemacompacto}
If $K\subset\R^2$ is compact, then the set $I_K=\{t\in\R\mid[\phi_{\g(t)}]\cap K\neq\emptyset\}$ is also compact.
\end{lema}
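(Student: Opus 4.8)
The set $I_K$ is certainly closed, since $\g$ is continuous, the leaves vary ``semicontinuously'' with the point, and $K$ is closed. The only thing at stake is boundedness, i.e.\ that only finitely many of the transverse segments $\g|_{[n,n+1]}$ belong to leaves that reach $K$. Since $K$ is compact, enlarging it, I may assume $K$ is a large square $[-R,R]^2$ (or a ball $B(R,\tilde z_0)$). The plan is to exploit Lemma~\ref{lemacone}: choose the inclination vector $v$ in Definition~\ref{defcone} so that the cone $C_{\tilde{z}_0}^{\rho_0}(v)$ is \emph{narrow}, i.e.\ so narrow that, after thickening by $L_0$ and adding the ball $B(L_1,\tilde z_0)$, the set $B(L_0,C_{\tilde z_0}^{\rho_0}(v))\cup B(L_1,\tilde z_0)$ still looks, far away from $\tilde z_0$, like a pair of thin strips in the directions $\pm\rho_0$. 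By Lemma~\ref{lemacone}, $[\g]$ lies inside this set; in particular, for $|t|$ large, $\g(t)$ lies in one of the two thin strips.

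The core of the argument is then a separation statement: I want to produce two transverse lines $\alpha_+$ and $\alpha_-$ (translates of the lines built in Lemma~\ref{lemacurvas}), together with integer translates, forming a bounded region $U\supset K$ as in the proof of Lemma~\ref{lemasobra}, with the extra feature that the ``tails'' of $[\g]$ escape $U$ on the correct sides. Concretely, since $\langle v_+,\rho_0\rangle>0$ and $\langle v_-,\rho_0\rangle>0$, the lines $\alpha_\pm+v_i$ are ``transverse'' to the direction $\rho_0$, so a thin strip in the $\rho_0$-direction crosses each of them exactly once; hence there is $T_0>0$ such that for $t>T_0$ the point $\g(t)$ lies in $l(\alpha_++v_3)\cap l(\alpha_-+v_4)$ (the ``upper'' foliated side), and for $t<-T_0$ it lies in $r(\alpha_++v_1)\cap r(\alpha_-+v_2)$. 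The leaf $\phi_{\g(t)}$ through such a point then lies \emph{entirely} on that same foliated side (because the $\alpha_\pm+v_i$ are transverse lines, so every leaf crosses each of them at most once and always in the same sense — exactly the monotonicity used in Lemma~\ref{lemasobra}), and in particular $\phi_{\g(t)}$ cannot meet $U\supset K$. This shows $I_K\subset[-T_0,T_0]$ up to the finitely many indices $|n|<T_0$, hence $I_K$ is bounded.

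A cleaner way to package the last step, avoiding case analysis about which side each tail escapes on, is to run the argument with \emph{all four} translated lines at once, as in Lemma~\ref{lemasobra}: the bounded component $U$ is sandwiched between $r(\alpha_++v_1)\cap r(\alpha_-+v_2)$ on one side and $l(\alpha_++v_3)\cap l(\alpha_-+v_4)$ on the other, so any leaf meeting $\overline U\supset K$ must be intersected by $\g$ within the bounded portion where $\g$ actually enters $U$. Because $\g$ is a line with $\|\g(t)\|\to\infty$ and, by Lemma~\ref{lemacone}, stays in the thickened narrow cone, $\{t : \g(t)\in \overline U\}$ is contained in a bounded interval; combined with the fact that a leaf meeting $K$ is crossed by $\g$ at a parameter controlled by this interval (up to a bounded shift coming from the diameter bound $L_0$ on each segment of $\g$), we get $I_K$ bounded.

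\textbf{Main obstacle.} The delicate point is the geometric bookkeeping: one must choose $v$ narrow enough — \emph{after} $L_0$ and $L_1$ are fixed by Lemmas~\ref{lemasobra} and \ref{lemacone} — that the two thickened tails of $[\g]$ genuinely lie on the prescribed foliated sides of the four translated lines $\alpha_\pm+v_i$, and that $U$ can still be chosen to contain the given compact $K$. This requires knowing that $B(L_0,C_{\tilde z_0}^{\rho_0}(v))$, intersected with the complement of a large ball around $\tilde z_0$, shrinks (in the directions transverse to $\rho_0$) as $v\to\rho_0$, which is elementary planar geometry but must be stated carefully; and it requires that the lines $\alpha_\pm$ of Lemma~\ref{lemacurvas} have well-defined, nonzero ``asymptotic directions'' not parallel to $\rho_0$, so that a thin $\rho_0$-strip meets each $\alpha_\pm+v_i$ in a bounded set — this is where $\langle v_\pm,\rho_0\rangle>0$ is used. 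Once these two geometric facts are in hand, the conclusion that leaves far along $\g$ avoid $K$ follows from the transverse-line monotonicity already invoked repeatedly above.
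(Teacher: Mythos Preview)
Your approach uses the same ingredients as the paper's (the cone containment of Lemma~\ref{lemacone}, the transverse lines $\alpha_\pm$ of Lemma~\ref{lemacurvas}, and the monotonicity of leaf crossings), but the central step is misformulated. You assert that for $t>T_0$ the point $\g(t)$ lies in $l(\alpha_++v_3)\cap l(\alpha_-+v_4)$, i.e., that the leaf through $\g(t)$ is \emph{entirely} contained in $L(\alpha_++v_3)\cap L(\alpha_-+v_4)$. There are two problems. First, crossing monotonicity only says a leaf crosses each transverse line at most once and always from $L$ to $R$; it does not say the leaf doesn't cross at all, so $\g(t)\in L(\alpha_++v_3)$ does \emph{not} imply $[\phi_{\g(t)}]\subset L(\alpha_++v_3)$. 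Second, even if the containment held, it would not separate the leaf from $K$: by the very construction you borrow from Lemma~\ref{lemasobra}, the region $U\supset K$ is itself inside $L(\alpha_++v_3)\cap L(\alpha_-+v_4)$, so a leaf lying there may perfectly well meet $K$. Your ``cleaner way'' paragraph is closer in spirit to a correct argument, but the claim that a leaf meeting $K$ is crossed by $\g$ only at parameters where $\g\in\overline U$ is again not what the crossing rule yields.

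The paper's argument avoids this by working dually: instead of tracking the leaf through $\g(t)$, it tracks the leaf through a point of $K$. With just two translated lines, chosen so that $K\subset R(\alpha_-+V_-)\cap L(\alpha_++V_+)$, the crossing rule immediately gives, for any leaf $\phi$ with $\phi(\bar s)\in K$, that $\phi(s)\in R(\alpha_-+V_-)$ for $s\ge\bar s$ (once on the right, stays on the right) and $\phi(s)\in L(\alpha_++V_+)$ for $s\le\bar s$ (before being on the right, was on the left). Hence $[\phi]\subset R(\alpha_-+V_-)\cup L(\alpha_++V_+)$. Now Lemma~\ref{lemacone}, with the cone inclination $v$ chosen so that $0<\langle v,\rho_0^\perp\rangle<\langle v_+,\rho_0^\perp\rangle$, forces $\g(t)$ to lie in $L(\alpha_-+V_-)\cap R(\alpha_++V_+)$ for all sufficiently large $t$, i.e., outside $R(\alpha_-+V_-)\cup L(\alpha_++V_+)$; a symmetric pair of translates handles $t\to-\infty$. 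This is precisely the geometric separation you were aiming for, but arranged so that the monotone crossing rule does all the work on the \emph{leaf} side while the cone lemma does the work on the $\g$ side.
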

\begin{proof}
Let  $\phi$ be a leaf such that $\phi(\bar s)\in K$, for some $\bar s\in\R$, and $V_-^w,V_+^w\in\Z^2$ such that $K\subset R(\alpha_-+V_-^w)$ and $K\subset L(\alpha_++V_+^w)$ (see Figure \ref{figlr}), where $\alpha_-$ and $\alpha_+$ are the transverse lines given by the Lemma \ref{lemacurvas} (note that we can find such lines since $K$ is compact, and therefore bounded).  Then we have that  $\phi(\bar s)\in R(\alpha_-+V_-^w)\cap L(\alpha_++V_+^w)$, and so, as remarked after Definition \ref{defequivalence}, $\phi(s)\in R(\alpha_-+V_-^w)$, for every $s>\bar s$, and $\phi(s)\in L(\alpha_++V_+^w)$, for every $s<\bar s$, that is, $[\phi]\subset R(\alpha_-+V_-^w)\cup L(\alpha_++V_+^w)$. However, since $\alpha_-$ is directed by $v_-$ such that $\langle v_-,\rho_0^\perp\rangle<0$ and $\langle v_-,\rho_0\rangle>0$ (and $\alpha_+$ is directed by $v_+$ such that $\langle v_+,\rho_0^\perp\rangle>0$ and $\langle v_+,\rho_0\rangle>0$) and, by the Lemma \ref{lemacone},  $[\g+w]$ is contained in $B(L_0,C_{\tilde{z}_0}^{\rho_0}(v)+w)\cup B(L_1,\tilde{z}_0+w)$, where the cone $C_{\tilde{z}_0}^{\rho_0}(v)$ is generated by $\rho_0$ and $v$ is a unit vector such that $\langle v,\rho_0\rangle>0$ and $0< \langle v,\rho_0^\perp\rangle<\langle v_+,\rho_0^\perp\rangle$, we have that there is $\bar t=\max\{t\in\R\mid(\g+w)(t)\in R(\alpha_-+V_-^w)\cup L(\alpha_++V_+^w)\}$, and therefore $\phi$ can only cross $\g+w$ before $\bar t$. Proceeding symmetrically we can obtain a lower bound for the instant in which $\phi$ can cross $\g+w$, thus proving the statement.

\begin{figure}[h!t]
\centering
\includegraphics[scale = 0.75]{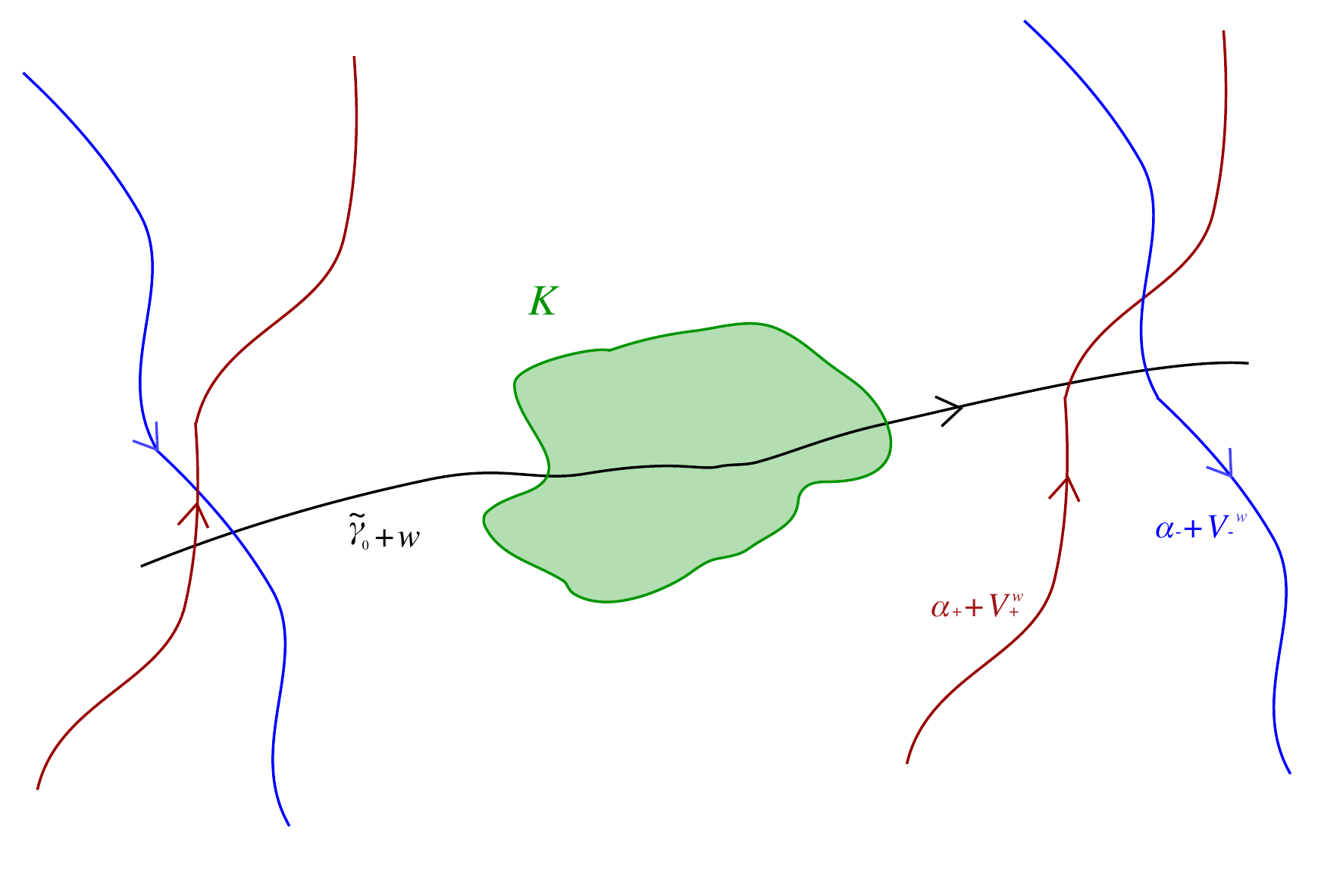}
\caption{Proof of Lemma \ref{lemacompacto}}
\label{figlr}
\end{figure}
\end{proof}

\begin{corolario}\label{lemadiresq}
Let $\zeta:[a,b]\to\R^2$ be a transverse path and $w\in\Z^2$. If $[\zeta]\cap l(\g+w)\neq\emptyset$ and $[\zeta]\cap r(\g+w)\neq\emptyset$, then $\zeta\pitchfork_{\tilde\F}(\g+w)$.
\end{corolario}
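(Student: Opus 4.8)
The plan is to apply Proposition \ref{proprl} with $\gamma = \g+w$ and $\gamma' = \zeta$. First I would note that $\g+w$ is a transverse line: since $\tilde\F$ is invariant under integer translations, $\g+w$ is transverse to $\tilde\F$, and by Corollary \ref{corlinha} it is a line; moreover, the $\Z^2$-invariance of $\tilde\F$ gives the identity $[\phi_{(\g+w)(t)}] = [\phi_{\g(t)}]+w$ for every $t$. The hypotheses $[\zeta]\cap l(\g+w)\neq\emptyset$ and $[\zeta]\cap r(\g+w)\neq\emptyset$ are exactly the assumptions of the corollary, so the only thing that needs to be verified is that $J^*=\{t\in\R\mid \phi_{(\g+w)(t)}\textrm{ crosses }\zeta\}$ is bounded.

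To check this, set $K = [\zeta]-w$. Since $\zeta$ is continuous on the compact interval $[a,b]$, the image $[\zeta]$ is compact, hence so is $K$. If $\phi_{(\g+w)(t)}$ crosses $\zeta$, then in particular $[\phi_{(\g+w)(t)}]\cap[\zeta]\neq\emptyset$; using the translation identity above this reads $([\phi_{\g(t)}]+w)\cap[\zeta]\neq\emptyset$, equivalently $[\phi_{\g(t)}]\cap K\neq\emptyset$. Thus $J^*\subseteq I_K$, where $I_K=\{t\in\R\mid[\phi_{\g(t)}]\cap K\neq\emptyset\}$, and by Lemma \ref{lemacompacto} the set $I_K$ is compact, in particular bounded. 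Hence $J^*$ is bounded.

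With all the hypotheses of Proposition \ref{proprl} in place, we obtain intervals $J$ and $J'$ such that $\zeta|_{J'}\pitchfork_{\tilde\F}(\g+w)|_J$, which is precisely the assertion $\zeta\pitchfork_{\tilde\F}(\g+w)$.

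I do not expect a genuine obstacle here: the argument is essentially a verification of hypotheses, the only substantive input being the previously established Lemma \ref{lemacompacto} (which in turn rests on Lemmas \ref{lemacurvas} and \ref{lemacone}), and the only point requiring a moment's care is the elementary translation identity $[\phi_{(\g+w)(t)}]=[\phi_{\g(t)}]+w$ coming from the $\Z^2$-invariance of $\tilde\F$.
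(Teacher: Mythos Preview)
Your proposal is correct and follows essentially the same approach as the paper: both reduce to Proposition~\ref{proprl} and verify the boundedness of $J^*$ via Lemma~\ref{lemacompacto}. You simply spell out the translation step $K=[\zeta]-w$ explicitly, whereas the paper absorbs this into the phrase ``follows directly from Lemma~\ref{lemacompacto}''.
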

\begin{proof}
By Proposition \ref{proprl}, it is sufficient to prove that the set
$$I_\zeta=\{t\in\R\mid\phi_{(\g+w)(t)}\textrm{ crosses }\zeta\}$$
 is compact. But this follows directly from Lemma \ref{lemacompacto}.
\end{proof}

\begin{lema}\label{lemalinhanovo}
If $\tilde\gamma':\R\to\R^2$ is such that $\tilde\gamma'\sim_{\tilde\F}\g$, then $\tilde\gamma'$ is a line.
\end{lema}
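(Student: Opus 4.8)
The plan is to use the definition of $\tilde\F$-equivalence directly. By Definition \ref{defequivalence} there is a continuous $H:\R\times[0,1]\to\textrm{dom}(\tilde\F)$ and an increasing homeomorphism $h:\R\to\R$ with $H(\cdot,0)=\g$, $H(\cdot,1)=\tilde\gamma'\circ h$, and with $\phi_{H(t,s)}$ independent of $s$; taking $s=0,1$ this gives $\phi_{\g(t)}=\phi_{\tilde\gamma'(h(t))}$ for every $t\in\R$. Since $\tilde\gamma'$ is a transverse path it is continuous and takes values in $\textrm{dom}(\tilde\F)$, so to conclude that $\tilde\gamma'$ is a line it only remains to prove that it is injective and proper, and for both I would transport the corresponding property of $\g$ through $h$.

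For injectivity, fix a leaf $\phi$. From $\phi_{\g(t)}=\phi_{\tilde\gamma'(h(t))}$ and the fact that $h$ is a bijection one gets $\{s\in\R\mid\tilde\gamma'(s)\in[\phi]\}=h(\{t\in\R\mid\g(t)\in[\phi]\})$, and by Lemma \ref{lemalinhaunica} the set on the right is empty or a singleton, hence so is the one on the left. Thus $\tilde\gamma'$ also meets each leaf in at most one parameter value. Since every point of $\textrm{dom}(\tilde\F)$ lies on exactly one leaf, $\tilde\gamma'(s_1)=\tilde\gamma'(s_2)$ forces $\phi_{\tilde\gamma'(s_1)}=\phi_{\tilde\gamma'(s_2)}$ and therefore $s_1=s_2$, so $\tilde\gamma'$ is injective.

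For properness, let $K\subset\R^2$ be compact. If $\tilde\gamma'(s)\in K$ then $[\phi_{\tilde\gamma'(s)}]\cap K\neq\emptyset$, and writing $t=h^{-1}(s)$ and using $\phi_{\tilde\gamma'(s)}=\phi_{\g(t)}$ we obtain $t\in I_K=\{t\in\R\mid[\phi_{\g(t)}]\cap K\neq\emptyset\}$. By Lemma \ref{lemacompacto} the set $I_K$ is compact, hence bounded, so $(\tilde\gamma')^{-1}(K)\subset h(I_K)$ is bounded; being also closed (by continuity of $\tilde\gamma'$), it is compact. Thus $\tilde\gamma'$ is proper, and being continuous, injective and proper it is a line (in particular, combined with Corollary \ref{corlinha}, this applies to $\g$ itself and to every path equivalent to it).

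I do not expect a real obstacle here; the only point requiring care is to extract from Definition \ref{defequivalence} the \emph{order-preserving} bijection $h$ of $\R$ between the parameters of $\g$ and of $\tilde\gamma'$, so that the leaf-crossing structure is transferred exactly rather than merely the set of leaves crossed — after that everything is a direct application of Lemmas \ref{lemalinhaunica} and \ref{lemacompacto}.
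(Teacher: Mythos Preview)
Your proof is correct and follows essentially the same approach as the paper: both arguments use Lemma \ref{lemalinhaunica} to transfer injectivity through the leaf-matching reparametrization $h$ coming from Definition \ref{defequivalence}, and then invoke Lemma \ref{lemacompacto} to show that $(\tilde\gamma')^{-1}(K)$ is bounded (hence compact) for any compact $K$. Your write-up is in fact a bit more explicit than the paper's about the role of $h$ as an increasing homeomorphism and about why $h(I_K)$ is bounded, but the underlying idea is identical.
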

\begin{proof}
By Lemma \ref{lemalinhaunica}, since $\tilde\gamma'\sim_{\tilde\F}\g$, we have that $\tilde\gamma'$  intersects each leaf at most once, therefore  $\tilde\gamma'$ is a simple path. Now let $K\subset\R^2$ be a compact set. Since $\tilde\gamma'\sim_{\tilde\F}\g$, we have that there is reparametrization of $\tilde\gamma'$ so that $\phi_{\g(t)}=\phi_{\tilde\gamma'(t)}$, for every $t\in\R$. Note that $\tilde\gamma'$ being proper is a property that does not depend on its parametrization. So, by Lemma \ref{lemacompacto}, we have that there is $M>0$ such that $\phi_{\tilde\gamma'(t)}\cap K=\emptyset$, for every $|t|>M$. Thus, if $t'\in(\tilde\gamma')^{-1}(K)$, we have that $|t'|<M$, and so we have that $\tilde\gamma'$ is a proper path, proving the lemma.
\end{proof}

\begin{lema}\label{lemagamaw}
Given $w\in\Z^2_*$, if $\g$ and $\g+w$  intersect the same leaf $\phi$, then there are sequences $t^+_k,s^+_k\nearrow+\infty$, $t^-_k,s^-_k\searrow-\infty$ and transverse paths $\tilde\gamma'_i:\R\to\R^2$, $i=1,2$ such that:
\begin{enumerate}[(i)]
\item $\tilde\gamma'_1\sim_{\tilde\F}\g$ and $\tilde\gamma'_2\sim_{\tilde\F}\g+w$;
\item $\tilde\gamma'_1(t^+_k)=\tilde\gamma'_2(s^+_k)$ and $\tilde\gamma'_1(t^-_k)=\tilde\gamma'_2(s^-_k)$, for every $k\in\N$.
\end{enumerate}
\end{lema}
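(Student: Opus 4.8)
The plan is to use the recurrence of $z_0$ to promote the single shared leaf $\phi$ into an infinite family of leaves that escape to infinity along both $\g$ and $\g+w$ and are each crossed by both paths, and then to reroute $\g+w$ near each of these leaves so that the new path meets $\g$ there. For the setup: since $\g$ is transverse to $\tilde\F$ and, by Lemma~\ref{lemalinhaunica}, crosses every leaf at most once, it crosses $\phi$ at exactly one parameter $a$; and because $\g+w$ crosses $\phi$ if and only if $\g$ crosses $\phi-w$, there is exactly one parameter $b$ with $\g(b)\in[\phi-w]$. Fix $N\in\N$ with $a,b\in(-N,N)$, and recall that $z_0$ (the point fixed via Proposition~\ref{proppqeps}) is recurrent both for $f$ and for $f^{-1}$ — by the forward and backward sequences of that proposition — with $\rho(\tilde f,z_0)=\rho_0\notin\Q^2$, so every recurrence vector of $z_0$ has norm tending to $\infty$.

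First I would carry out the propagation, repeating the argument in the proof of Lemma~\ref{lemaautogamma}. Using recurrence and Lemma~\ref{lemaestabilidade} in the same manner, I obtain $w_k^+\in\Z^2$ with $\|w_k^+\|\to\infty$ and $n_k\to+\infty$ such that $\g|_{[-N,N]}$ is $\tilde\F$-equivalent to a subpath of $(\g-w_k^+)|_{[n_k-N-1,\,n_k+N+1]}$, and, applying the same to $f^{-1}$, vectors $w_k^-\in\Z^2$ with $\|w_k^-\|\to\infty$ and $n_k'\to+\infty$ with $\g|_{[-N,N]}$ equivalent to a subpath of $(\g-w_k^-)|_{[-n_k'-N-1,\,-n_k'+N+1]}$. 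Since $\g|_{[-N,N]}$ crosses both $\phi$ and $\phi-w$, each such subpath crosses both $\psi_k^\pm:=\phi+w_k^\pm$ and $\psi_k^\pm-w=(\phi-w)+w_k^\pm$, so $\g$ crosses both of these leaves at parameters in the displayed interval; and since $\g+w$ crosses $\psi_k^\pm$ exactly when $\g$ crosses $\psi_k^\pm-w$, I conclude that both $\g$ and $\g+w$ cross each leaf $\psi_k^\pm$, say at parameters $t_k^\pm$ and $s_k^\pm$, all lying in those intervals. Passing to subsequences I may assume $t_k^+,s_k^+\nearrow+\infty$, $t_k^-,s_k^-\searrow-\infty$, and that the leaves $\psi_k^\pm$ are pairwise distinct — otherwise $\g$ would cross a single leaf twice, contradicting Lemma~\ref{lemalinhaunica}.

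Then comes the surgery. On $\psi_k^\pm$ the path $\g$ passes through $P_k^\pm=\g(t_k^\pm)$ and $\g+w$ through $Q_k^\pm=(\g+w)(s_k^\pm)$; let $A_k^\pm$ be the sub-arc of $\psi_k^\pm$ joining them. Since $t_k^\pm$ and $s_k^\pm$ differ from $n_k$ (resp. $-n_k'$) by at most $N+1$, and each unit segment of $\g$ has diameter less than $L_0$ (Lemma~\ref{lemasobra}), the points $P_k^\pm$ and $Q_k^\pm$ lie within a fixed constant of $w_k^\pm$; translating $A_k^\pm$ by $-w_k^\pm$ turns it into an arc of the fixed leaf $\phi$ with endpoints in a fixed ball, and since $\phi$ is proper this arc lies in one fixed compact set. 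Hence $\diam(A_k^\pm)$ is bounded uniformly in $k$, while $A_k^\pm$ sits near $w_k^\pm\to\infty$; so $\{A_k^\pm\}$ is a locally finite family of pairwise disjoint compact arcs, and I may choose pairwise disjoint flow-box neighborhoods of them. Inside each box I reroute $\g+w$, leaving it unchanged outside and keeping fixed the two points where it meets the box's boundary, so that it crosses $\psi_k^\pm$ at $P_k^\pm$ instead of at $Q_k^\pm$ — a modification that leaves the set of leaves crossed unchanged, hence preserves the $\tilde\F$-equivalence class. Setting $\tilde\gamma_1'=\g$, this produces $\tilde\gamma_2'\sim_{\tilde\F}\g+w$ with $\tilde\gamma_1'(t_k^\pm)=P_k^\pm=\tilde\gamma_2'(s_k^\pm)$ (after renaming the parameters of $\tilde\gamma_2'$), and these parameters still tend to $\pm\infty$ because each modification sits over parameters close to $n_k$ (resp. $-n_k'$). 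A final passage to subsequences makes the four sequences strictly monotone, yielding the statement.

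I expect the surgery to be the delicate point: one has to carry out infinitely many local reroutings that are mutually disjoint and individually equivalence-preserving, and the way to make this work cleanly is the uniform bound on $\diam(A_k^\pm)$, which ultimately comes from the properness of the single leaf $\phi$ together with the bounded-segment property of $\g$ (Lemma~\ref{lemasobra}). The propagation step is, by comparison, a routine repetition of the proof of Lemma~\ref{lemaautogamma}; its only additional input is that $z_0$ is recurrent in both time directions, which is part of the conclusion of Proposition~\ref{proppqeps}.
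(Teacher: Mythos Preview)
Your argument is correct and follows the same strategy as the paper: use recurrence to propagate the shared leaf $\phi$ to an infinite family of integer translates $\phi+w_k^\pm$ crossed by both $\g$ and $\g+w$, then perform local flow-box surgeries near each of these leaves to force actual coincidence points. The paper's proof is much terser --- it simply invokes the $\F$-recurrence of $\g$ (Corollary~\ref{correcorrente}) to get the sequences, declares tubular neighbourhoods $W_k^\pm$ of the leaves $\phi+w_k^\pm$ which it asserts can be taken pairwise disjoint because $\|w_k^\pm\|\to\infty$, and then refers back to the modification of Lemma~\ref{lemacurvas} for the surgery.

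Where you go beyond the paper is in justifying \emph{why} the surgeries can be localised and made disjoint: your uniform bound on $\diam(A_k^\pm)$, obtained by translating the arc back to the fixed proper leaf $\phi$ and using the bounded-segment property from Lemma~\ref{lemasobra}, is exactly the missing estimate that makes the paper's ``we can assume the neighbourhoods are mutually disjoint'' rigorous. You also simplify slightly by keeping $\tilde\gamma_1'=\g$ and modifying only the second path, whereas the paper's phrasing allows both to be altered; either way works. So: same approach, but your version supplies the care the paper omits.
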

\begin{proof}
Since $\g$ is recurrent, we can find sequences $t^+_k,s^+_k\nearrow+\infty$,  such that $\g(t_k^+)$ and $(\g+w)(s_k^+)$ belong to the leaf $\phi_k^+=\phi+w_k^+$, for some $w_k^+\in\Z^2$ (and in the same way, sequences $t^-_k,s^-_k\searrow-\infty$ with the same property). Let, for each $k\in \N$, $W_k^+$ be a tubular neighborhood of the leaf $\phi+w_k^+$ such that  $\g(t_k^+),(\g+w)(s_k^+)\in W_k^+$ and $W_k^-$ a tubular neighborhood of the leaf $\phi+w_k^-$ such that  $\g(t_k^-),(\g+w)(s_k^-)\in W_k^-$. Since $||w_k^\pm||\to\infty$, we can assume that the neighborhoods are mutually disjoint. So, making in each neighborhood a modification in the ways similar to the one made in the Lemma \ref{lemacurvas} (see Figure \ref{figlemacurvas}) we can obtain transverse paths $\tilde\gamma'_1$ and $\tilde\gamma'_2$ which satisfy the properties $(i)$ and $(ii)$ of the statement.
\end{proof}

\begin{obs}\label{obslinhas}
Note that by Lemma \ref{lemalinhanovo}, we have that $\tilde\gamma'_1$ and $\tilde\gamma'_2$ are lines. 
\end{obs}

\begin{lema}\label{lemacompconexa}
If $\g$ and $\g+w$ cross the same leaf, then every connected component of $L(\tilde\gamma'_1)\cap R(\tilde\gamma'_2)$ and of $R(\tilde\gamma'_1)\cap L(\tilde\gamma'_2)$ is bounded, where $\tilde\gamma'_1$ and $\tilde\gamma'_2$ are the paths given by Lemma \ref{lemagamaw}.  
\end{lema}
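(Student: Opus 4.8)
The plan is to argue by contradiction: suppose some connected component $V$ of, say, $L(\tilde\gamma'_1)\cap R(\tilde\gamma'_2)$ is unbounded. Recall from Lemma~\ref{lemagamaw} and Remark~\ref{obslinhas} that $\tilde\gamma'_1$ and $\tilde\gamma'_2$ are lines with $\tilde\gamma'_1\sim_{\tilde\F}\g$ and $\tilde\gamma'_2\sim_{\tilde\F}\g+w$, and that there are parameters $t^\pm_k,s^\pm_k$ with $\tilde\gamma'_1(t^\pm_k)=\tilde\gamma'_2(s^\pm_k)$, so the two lines intersect (as point sets) infinitely often, both ``towards $+\infty$'' and ``towards $-\infty$''. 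Since they are equivalent to $\g$ and $\g+w$, and $\g$ crosses each leaf at most once (Lemma~\ref{lemalinhaunica}), each of $\tilde\gamma'_1,\tilde\gamma'_2$ crosses each leaf at most once, and the foliated sides $l(\tilde\gamma'_i),r(\tilde\gamma'_i)$ agree with $l(\g),r(\g+w)$ up to the reparametrization. The first step is to observe that, because the two lines share infinitely many points going to infinity in both directions, the intersection points $\tilde\gamma'_1(t^+_k)$ partition $[\tilde\gamma'_1]$ and $[\tilde\gamma'_2]$ into matching arcs, so that $L(\tilde\gamma'_1)\cap R(\tilde\gamma'_2)$ is a countable union of ``lens''-shaped regions, each bounded by one arc of $\tilde\gamma'_1$ and one arc of $\tilde\gamma'_2$ between consecutive intersection points.

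The key step is then to show each such lens is bounded. Fix a component $V$ of $L(\tilde\gamma'_1)\cap R(\tilde\gamma'_2)$; its boundary consists of an arc $a_1$ of $\tilde\gamma'_1$ and an arc $a_2$ of $\tilde\gamma'_2$ sharing endpoints. If $V$ were unbounded, pick a point $\tilde x\in V$ far from $\tilde z_0$ and $\tilde z_0+w$. I would use Lemma~\ref{lemacone} to control the location of $[\g]$ and $[\g+w]$: both lie inside thin cones $B(L_0,C^{\rho_0}_{\tilde z_0}(v))\cup B(L_1,\tilde z_0)$ and its $w$-translate, which are comparable to a pair of lines through $\tilde z_0$ resp.\ $\tilde z_0+w$ in the direction $\rho_0$. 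A point deep inside an unbounded component $V$ of $L(\tilde\gamma'_1)\cap R(\tilde\gamma'_2)$ would have to lie in $l(\g)\cap r(\g+w)$ for the leaf through it (using that the leaves through $\g$ fill a simply connected region, Lemma~\ref{lemailimitada}, so being on the $L$-side of the line forces being on the $l$-side foliation-wise, away from the line itself). Then I would feed this into Corollary~\ref{lemadiresq}: a transverse path reaching from $l(\g+w)$ to $r(\g+w)$, or from $l(\g)$ to $r(\g)$, produces a $\tilde\F$-transverse intersection of $\g$ with $\g+w$. But $\g\pitchfork_{\tilde\F}(\g+w)$ is forbidden by Lemma~\ref{lemaautogamma}. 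The way to realize such a transverse path is to use the arcs $a_1\subset\tilde\gamma'_1\sim_{\tilde\F}\g$ and $a_2\subset\tilde\gamma'_2\sim_{\tilde\F}\g+w$ themselves, together with a segment inside $V$ running ``to infinity'': the unboundedness of $V$ forces $a_1$ (or $a_2$) to cross into both $l$ and $r$ of $\g+w$ (resp.\ of $\g$), giving the contradiction via Corollary~\ref{lemadiresq} and the invariance of transverse intersections under $\sim_{\tilde\F}$.

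The main obstacle I anticipate is the geometric bookkeeping needed to turn ``$V$ is unbounded'' into ``some arc $a_i$ meets both foliated sides of the other curve.'' One must carefully use that $\tilde\gamma'_1,\tilde\gamma'_2$ are lines (Remark~\ref{obslinhas}) so that $L/R$ are genuine topological half-planes, combine this with the cone estimate of Lemma~\ref{lemacone} to see that the two lines are ``asymptotically parallel'' in direction $\rho_0$, and deduce that an unbounded complementary component cannot be squeezed between two asymptotically parallel lines unless one line crosses from one side of the other to the opposite side — which by Proposition~\ref{proprl}/Corollary~\ref{lemadiresq} yields a transverse intersection. Handling the symmetric statement for $R(\tilde\gamma'_1)\cap L(\tilde\gamma'_2)$ is identical after swapping the roles of the two curves. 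Thus the contradiction with Lemma~\ref{lemaautogamma} completes the proof.
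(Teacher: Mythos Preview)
Your overall strategy---assume an unbounded component and derive a forbidden transverse intersection $\g\pitchfork_{\tilde\F}(\g+w)$ via Corollary~\ref{lemadiresq} and Lemma~\ref{lemaautogamma}---is the right endgame, and it is also how the paper concludes. But the mechanism you propose for reaching that contradiction is quite different from the paper's, and as written it has real gaps.

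The paper does \emph{not} try to classify the components of $L(\tilde\gamma'_1)\cap R(\tilde\gamma'_2)$ as lenses, nor does it invoke the cone estimate of Lemma~\ref{lemacone}. Instead it takes the hypothetical unbounded component $O$, picks a boundary point $P_1\in[\tilde\gamma'_1]\setminus[\tilde\gamma'_2]$ (and symmetrically $P_2$), and studies the \emph{leaf} $\phi_1$ through $P_1$. Using two of the intersection points supplied by Lemma~\ref{lemagamaw} on either side of $P_1$, it builds a single Jordan curve $C_1$ out of one arc of $\tilde\gamma'_1$ and one arc of $\tilde\gamma'_2$ with $P_1\in C_1$. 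Then $\phi_1(\varepsilon)\in O$ and $\phi_1(-\varepsilon)$ lie in different complementary components of $C_1$; since $\tilde\gamma'_1$ cannot meet both $l(\tilde\gamma'_2)$ and $r(\tilde\gamma'_2)$ (else Corollary~\ref{lemadiresq} gives the contradiction directly), the $\alpha$-limit of $\phi_1$ lands in a component of $l(\tilde\gamma'_2)$, which is unbounded by Lemma~\ref{lemailimitada}. One then has two disjoint unbounded sets ($O$ and that component together with a half-leaf) lying in different components of the complement of a Jordan curve, which is impossible.

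In your plan there are two specific problems. First, the assertion that the components are ``lens-shaped, each bounded by one arc of $\tilde\gamma'_1$ and one arc of $\tilde\gamma'_2$ between consecutive intersection points'' needs an argument: you must show the intersection points occur in the \emph{same order} along both curves (this does follow from Lemma~\ref{lemalinhaunica}, since both curves cross each leaf at most once, but you never say so), and you must deal with the possibility that $[\tilde\gamma'_1]\cap[\tilde\gamma'_2]$ is not discrete. Once the lens structure is established, boundedness is immediate by Jordan---so the subsequent contradiction argument you sketch is superfluous. Second, and more seriously, your fallback via Lemma~\ref{lemacone} does not apply: that lemma controls $[\g]$, not $[\tilde\gamma'_1]$. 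Two $\tilde\F$-equivalent lines can have very different images (an equivalent path can make arbitrarily large excursions along leaves), so ``asymptotically parallel'' is not available for $\tilde\gamma'_1,\tilde\gamma'_2$. Relatedly, the claim that ``being on the $L$-side of the line forces being on the $l$-side foliation-wise'' is false in general: a point of $L(\tilde\gamma'_1)$ may well lie on a leaf that crosses $\tilde\gamma'_1$, hence in neither $l(\tilde\gamma'_1)$ nor $r(\tilde\gamma'_1)$.
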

\begin{proof}
Let us assume, by contradiction, that the result is not true. We can, without loss of generality, assume that there is an unbounded connected component $O$ of $R(\tilde\gamma'_1)\cap L(\tilde\gamma'_2)$. The case where there is an unbounded connected component of $L(\tilde\gamma'_1)\cap R(\tilde\gamma'_2)$ is similar. Since $\tilde\gamma'_1$ and $\tilde\gamma'_2$ are lines that cross each other, any connected component of the complement of $[\tilde\gamma'_1]\cup[\tilde\gamma'_2]$ has on its boundary points that are in $[\tilde\gamma'_1]$ but not in $[\tilde\gamma'_2]$ and also point that are in $[\tilde\gamma'_2]$ but not in $[\tilde\gamma'_1]$. Let $P_1$ and $P_2$ be points in $\partial O\cap[\tilde\gamma'_1]\cap (\R^2\setminus[\tilde\gamma'_2])$ and $\partial O\cap(\R^2\setminus[\tilde\gamma'_1])\cap [\tilde\gamma'_2]$, respectively.  Let $\phi_1,\phi_2:\R\to \textrm{dom}(\tilde I)$ be the leaves of $\tilde\F$ passing through $P_1$ and $P_2$ respectively, and let us assume that $\phi_1(0)=P_1$ and $\phi_2(0)=P_2$. Note that $\phi_1(0)\in L(\tilde\gamma'_2)$, and since every leaf of $\tilde\F$ that intersects $\tilde\gamma'_1$ or $\tilde\gamma'_2$ must cross that path from left to right, we have that $\phi_1((-\infty, 0))$ is contained in $L(\tilde\gamma'_1)\cup L(\tilde\gamma'_2)$. Moreover, if $\phi_1((-\infty, 0))$ is bounded, then the $\alpha$-limit set of $\phi_1$ will be contained in $l(\tilde\gamma'_1)\cup l(\tilde\gamma'_2)$. Furthermore, if $\varepsilon>0$ is sufficiently small, then $\phi_1(\varepsilon)$ belongs to $O$. Analogously, it is possible to show that $\phi_2((0,+\infty))$ is contained in $R(\tilde\gamma'_1)\cup R(\tilde\gamma'_2)$,  its $\omega$-limit set is contained in  $r(\gamma_1)\cup r(\gamma_2)$ and, if $\varepsilon>0$ is sufficiently small, then $\phi_2(-\varepsilon)$ belongs to $O$.

We know that, since $\g$ and $\g+w$ have no $\tilde\F$-transverse intersection, the same holds for $\tilde\gamma'_1$ and $\tilde\gamma'_2$, since these paths are equivalent to $\g$ and $\g+w$, respectively. Therefore, by the Corollary \ref{lemadiresq}, we have that $\tilde\gamma'_1$ can not intersect both $r(\tilde\gamma'_2)$ and $l(\tilde\gamma'_2)$. Let us assume initially that $\tilde\gamma'_1$ does not intersect $l(\tilde\gamma'_2)$.

Let $t_0$ be such that $\tilde\gamma'_1(t_0)=P_1$. By the Lemma \ref{lemagamaw} there are $t_{-}<t_0<t_{+}$ and $s_{-}<s_{+}$ such that $\tilde\gamma'_1(t_{-})=\tilde\gamma'_2(s_{-})$ and $\tilde\gamma'_1(t_{+})=\tilde\gamma'_2(s_{+})$. Let $s'_{-}$ be the largest real number smaller than $s^{+}$ such that $\tilde\gamma'_2(s'_{-})= \tilde\gamma'_1(t'_{-})$, with $t'_{-}<t_0$, and let $s'_{+}$ be the smallest real number bigger than $s'_{-}$ such that $\tilde\gamma'_2(s'_{+})= \tilde\gamma'_1(t'_{+})$, with $t'_{+}>t_0$.  We have that, with the proper orientation, $\tilde\gamma'_1([t'_{-}, t'_{+}])\cup \tilde\gamma'_2([s'_{-}, s'_{+}])$ is the image of a simple closed curve $C_1$, separating the plane into two disjoint, one of them being bounded, $P_1$ belongs to the image of this curve, and if $\varepsilon$ is sufficiently small, then $\phi_1(-\varepsilon)$ and $\phi_1(\varepsilon)$ belong to distinct components of the complement of the curve. But we have that, if $H_1$ is a connected component of $l(\tilde\gamma'_2)$ which contains the $\alpha$-limit set of $\phi_1$, then $F_1=H_1\cup\phi_1((-\infty,-\varepsilon))$ does not intersect $[\tilde\gamma'_1]\cup[\tilde\gamma'_2]$. Furthermore, by the Lemma \ref{lemailimitada} we have that $H_1$ is an unbounded set, therefore $F_1$ is also unbounded, and thus is contained in an unbounded connected component of the complement of $C_1$. Note also that $\phi_1(-\varepsilon)$ is in the same connected complement component of $C_1$ as $F_1$. However, $O$  is contained in the complement of $[\tilde\gamma'_1]\cup[\tilde\gamma'_2]$, and therefore in the complement of $C_1$, and $O$ is also unbounded. Note that $\phi_1(\varepsilon)\in O$ and that $\phi_1(-\varepsilon)$ and $\phi_1(\varepsilon)$ are in separate components of the complement of $C_1$. Then $O$ and $F_1$ are contained in distinct components of the complement of $C_1$ and both are unbounded, which is absurd.

Let us now assume that $\tilde\gamma'_1$ does not intersect $ r(\tilde\gamma'_2)$. Let $s_0$ be such that $\tilde\gamma'_2(s_0)=P_2$. We can, as before, find $t''_{-}<t''_{+}$ and $s''_{-}<s_0<s''_{+}$ such that $\tilde\gamma'_1([t''_{-}, t''_{+}])\cup \tilde\gamma'_2([s''_{-}, s''_{+}])$ is the image of a simple closed curve $C_2$, and such that if  $\varepsilon$ is sufficiently small, $\phi_2(-\varepsilon)$ and $\phi_2(\varepsilon)$ belong to distinct components of the complement of the curve $C_2$. Note that, if $H_2$ is the connected component of $r(\tilde\gamma'_2)$ which contains the $\omega$-limit set of $\phi_2$, then $F_2=H_2\cup\phi_2((\varepsilon, +\infty)$ does not intersect $[\tilde\gamma'_1]\cup[\tilde\gamma'_2]$. Proceeding as before we show that the two connected components of the complement of $ C_2 $ are unbounded, thus obtaining a contradiction.
\end{proof}

\begin{lema}\label{lemalrnovo2}
If $r(\g)\cap l(\g+w)\neq\emptyset$, then:
\begin{enumerate}[(i)]
\item $[\g]\cap l(\g+w)\neq\emptyset$, $[\g]\cap r(\g+w)=\emptyset$;
\item $[\g+w]\cap r(\g)\neq\emptyset$, $[\g+w]\cap l(\g)=\emptyset$.
\end{enumerate}
\end{lema}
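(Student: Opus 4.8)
The plan is to derive each of the four asserted (in)equalities by contradiction, exploiting that $r(\g)\cap l(\g+w)\neq\emptyset$ supplies a leaf $\phi_0$ with $[\phi_0]\subset R(\g)\cap L(\g+w)$ (if $r(\g)\cap l(\g+w)$ contains only a singularity, replace it by a nearby leaf still lying in the open set $R(\g)\cap L(\g+w)$), together with two standing observations. First, since $\tan(\rho_0)\notin\Q$ the vector $w$ is not proportional to $\rho_0$, so $\langle w,\rho_0^\perp\rangle\neq 0$. Second, both $\g$ and $\g+w$ are lines \emph{directed by} $\rho_0/\|\rho_0\|$ (from the normalisation $\g(n)=\tilde f^n(\tilde z_0)$, the uniform bound $L_0$ on the diameters of the pieces of $\g$, and $\rho(\tilde f,z_0)=\rho_0$). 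Consequently, whenever $[\g]\cap[\g+w]=\emptyset$, these two disjoint equally-directed lines are \emph{ordered}: one lies strictly to the left of the other, so $L(\g)$ and $L(\g+w)$ are nested, and $L(\g+w)\subset L(\g)$, $[\g+w]\subset L(\g)$ and $[\g]\subset R(\g+w)$ are all equivalent (and symmetrically with $L$ and $R$ interchanged).

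The first step is to show that \emph{$\g$ and $\g+w$ cross no common leaf.} If they did, Lemma~\ref{lemagamaw} and Remark~\ref{obslinhas} would produce lines $\tilde\gamma'_1\sim_{\tilde\F}\g$ and $\tilde\gamma'_2\sim_{\tilde\F}\g+w$, and Lemma~\ref{lemacompconexa} would force every connected component of $R(\tilde\gamma'_1)\cap L(\tilde\gamma'_2)$ to be bounded. But equivalent transverse paths cross the same leaves and (the equivalence being realised inside the union of those leaves) have the same foliated left and right, so $\phi_0\in r(\tilde\gamma'_1)\cap l(\tilde\gamma'_2)$, i.e.\ $[\phi_0]\subset R(\tilde\gamma'_1)\cap L(\tilde\gamma'_2)$; this is absurd since $[\phi_0]$ is connected and unbounded. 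Hence, writing $\mathcal{U}_{\g}:=\bigcup_{t\in\R}[\phi_{\g(t)}]$, the sets $\mathcal{U}_{\g}$ and $\mathcal{U}_{\g+w}=\mathcal{U}_{\g}+w$ are disjoint; in particular $\g$ crosses no leaf that $\g+w$ crosses (and conversely), and $[\g]\cap[\g+w]=\emptyset$, so the ordering remark applies.

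I now prove (i); part (ii) is obtained in exactly the same way after interchanging $\g$ with $\g+w$ and $l$ with $r$ (using Corollary~\ref{lemadiresq} with $w=0$). First, $[\g]$ cannot meet both $l(\g+w)$ and $r(\g+w)$: otherwise a suitable compact subpath of $\g$ would meet both, and Proposition~\ref{proprl} (equivalently Corollary~\ref{lemadiresq}) would give $\g\pitchfork_{\tilde\F}(\g+w)$, against Lemma~\ref{lemaautogamma}. On the other hand $\g$ crosses at least one leaf, which by the first step is not crossed by $\g+w$ and therefore lies strictly on one side of $\g+w$; hence $[\g]$ meets $l(\g+w)\cup r(\g+w)$, and so exactly one of the two. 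If $[\g]$ met $r(\g+w)$, then every leaf crossed by $\g$ would lie in $r(\g+w)$ — none is crossed by $\g+w$, and none lies in $l(\g+w)$ — so $[\g]\subset R(\g+w)$; the ordering remark would then give $L(\g+w)\subset L(\g)$, i.e.\ $R(\g)\cap L(\g+w)=\emptyset$, contradicting $[\phi_0]\subset R(\g)\cap L(\g+w)$. Hence $[\g]$ meets $l(\g+w)$ but not $r(\g+w)$, which is (i); for (ii) the same argument shows $[\g+w]$ meets exactly one of $l(\g),r(\g)$, and it cannot be $l(\g)$, for then $[\g+w]\subset L(\g)$, whence $L(\g+w)\subset L(\g)$ and again $R(\g)\cap L(\g+w)=\emptyset$.

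I expect the main obstacle to be the ``ordering'' of $\g$ and $\g+w$ — excluding the configuration in which each of these two disjoint, equally-directed lines lies strictly to the ``right'' of the other — which is precisely where $\langle w,\rho_0^\perp\rangle\neq 0$ (hence the irrationality of $\tan(\rho_0)$) and the directedness of $\g$ become indispensable. The remaining care is more routine: passing between statements about points of $\g$ lying on leaves and statements about the topological half-planes $L(\cdot)$, $R(\cdot)$; checking that equivalence of transverse lines preserves foliated left and right; and reducing a singular witness $\phi_0$ to a nearby leaf.
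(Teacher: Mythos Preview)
Your argument has a genuine gap at the very first step. You claim that the hypothesis $r(\g)\cap l(\g+w)\neq\emptyset$ forces $\g$ and $\g+w$ to cross no common leaf, arguing that otherwise Lemma~\ref{lemacompconexa} makes every component of $R(\tilde\gamma'_1)\cap L(\tilde\gamma'_2)$ bounded while ``$[\phi_0]$ is connected and unbounded'' sits inside one of them. But there is no reason for an individual leaf $[\phi_0]$ of $\tilde\F$ to be unbounded in $\R^2$: leaves of a singular Brouwer--Le Calvez foliation on the plane may perfectly well be bounded (closed loops enclosing a singularity, or arcs whose ends accumulate on singularities). The only unboundedness available here is that of \emph{connected components} of $r(\g)$ (Lemma~\ref{lemailimitada}), not of single leaves. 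Your parenthetical ``replace a singularity by a nearby leaf still lying in $R(\g)\cap L(\g+w)$'' is also unjustified: a nearby leaf has a \emph{point} in that open set but need not be entirely contained in it.

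In fact the case you try to exclude is exactly the nontrivial case the paper treats. If one runs the unboundedness argument correctly --- taking the connected component $C$ of $r(\g)=r(\tilde\gamma'_1)$ through the witness point --- one gets $C$ unbounded (Lemma~\ref{lemailimitada}) and meeting a bounded component of $R(\tilde\gamma'_1)\cap L(\tilde\gamma'_2)$ (Lemma~\ref{lemacompconexa}); since $C$ is disjoint from $[\tilde\gamma'_1]$, it must meet $[\tilde\gamma'_2]$, which yields $[\g+w]\cap r(\g)\neq\emptyset$ directly, and then Corollary~\ref{lemadiresq} with Lemma~\ref{lemaautogamma} gives $[\g+w]\cap l(\g)=\emptyset$. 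This is the paper's proof of (ii); the symmetric argument gives (i). Nothing here shows --- or needs --- that $\g$ and $\g+w$ share no leaf. Since all of your subsequent steps (the disjointness $[\g]\cap[\g+w]=\emptyset$ and the ``ordering'' of the two directed lines, which you yourself flag as delicate) rest on that no-common-leaf claim, the proposal does not stand as written.
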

\begin{proof}
Note that if $\g$ and $\g +w$ do not cross a common leaf the result is trivial. Suppose then that $\g$ and $\g+w$  cross the same leaf, and let $\tilde\gamma'_1$ and $\tilde\gamma'_2$ be the lines given by Lemma \ref{lemagamaw}. Note that, since $\tilde\gamma'_1\sim_{\tilde\F}\g$ and $\tilde\gamma'_2\sim_{\tilde\F}\g+w$, we have $r(\g)=r(\tilde\gamma'_1)$ and $l(\g+w)=l(\tilde\gamma'_2)$. Let  $\tilde p'\in r(\tilde\gamma'_1)\cap l(\tilde\gamma'_2)$. Since $r(\tilde\gamma'_1)\subset R(\tilde\gamma'_1)$ and $l(\tilde\gamma'_2)\subset L(\tilde\gamma'_2)$, we have that $\tilde p'$ belongs to $R(\tilde\gamma'_1)\cap L(\tilde\gamma'_2)$ and so, by the Lemma \ref{lemacompconexa}, belongs to a bounded connected component of $R(\tilde\gamma'_1)\cap L(\tilde\gamma'_2)$. But the connected component $C$ of $r(\tilde\gamma'_1)$ which contains  $\tilde p'$ is unbounded, by Lemma \ref{lemailimitada}, and therefore needs to intersect $[\tilde\gamma'_1]\cup[\tilde\gamma'_2]$. Since $C$ is disjoint from $[\tilde\gamma'_1]$, we have that it intersects $[\tilde\gamma'_2]$,which implies that there is $\bar t$ such that $[\phi_{\tilde\gamma'_2(\bar t)}]\subset r(\g)$ and since $\tilde\gamma'_2\sim_{\tilde\F}\g+w$, this implies that some point of $\g + w$ is on the leaf $\phi_{\tilde\gamma'_2(\bar t)}$, and therefore is also in $r(\g)$. Since $\g$ and $\g+w$ do not have a transverse intersection, we deduce by Corollary~\ref{lemadiresq} that $[\g+w]\cap l(\g)=\emptyset$. The proof that $l(\g+w)$ intersects $\g$ and therefore $[\g]\cap r(\g+w)=\emptyset$  is analogous.
\end{proof}

\begin{obs}\label{obslrnovo}
Note that we can obtain a result symmetrical to the previous one, with  $r(\g+w)\cap l(\g)\neq\emptyset$. 
\end{obs}

Let $p$ be a singularity of $\F$ and let us fix $\tilde{p}$ a lift of $p$. We have that $\tilde{p}$ is a singularity of $\tilde\F$ and suppose that $\tilde{p}\in l(\g)$.

\begin{lema}\label{lemacorte}
Let $w,w'\in\Z^2$ be such that  $\langle w,\rho_0^\perp\rangle>0$ and $\tilde{p}+w'\in l(\g)$, then $(\tilde{p}+w')+w\in l(\g)$.
\end{lema}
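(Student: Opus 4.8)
The plan is to argue by contradiction, the decisive point being that a single failure of the conclusion propagates, through the combinatorics of Lemma~\ref{lemalrnovo2}, into an unbounded family of singularities trapped on the wrong side of $\g$, which then clashes with the confinement of $\g$ to a thin cone. First I would observe that $\tilde p+w'+w$ is a $\Z^2$‑translate of the singularity $\tilde p$, and since $\tilde\F$ is the lift of a foliation of $\T$ it is invariant under integer translations; hence $\tilde p+w'+w\in\mathrm{sing}(\tilde\F)$ and in particular it does not lie on the transverse line $\g$. Thus $\tilde p+w'+w$ belongs to $l(\g)$ or to $r(\g)$, and it suffices to exclude the latter, so I assume $\tilde p+w'+w\in r(\g)$.

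The key step is to deduce from this that simultaneously $r(\g)\cap l(\g+w)\neq\emptyset$ and $l(\g)\cap r(\g+w)=\emptyset$. For the first: $\tilde p+w'+w\in r(\g)$ by assumption, while $\tilde p+w'+w\in l(\g+w)$ is equivalent, translating by $-w$ and using $\Z^2$‑invariance of $\tilde\F$, to $\tilde p+w'\in l(\g)$, which is the hypothesis. For the second I would invoke Lemma~\ref{lemalrnovo2} together with its symmetric version, Remark~\ref{obslrnovo}: from $r(\g)\cap l(\g+w)\neq\emptyset$ Lemma~\ref{lemalrnovo2} yields $[\g+w]\cap l(\g)=\emptyset$, whereas if one also had $r(\g+w)\cap l(\g)\neq\emptyset$ the symmetric statement would yield $[\g+w]\cap l(\g)\neq\emptyset$, a contradiction; both applications are legitimate because, by Lemma~\ref{lemaautogamma}, $\g$ has no $\tilde\F$‑transverse intersection with any integer translate of itself, which is the hypothesis built into Lemma~\ref{lemalrnovo2}. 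Now, any singularity $s$ of $\tilde\F$ with $s\in l(\g)$ lies in $L(\g)$, is not on $[\g+w]$, and — by $l(\g)\cap r(\g+w)=\emptyset$ — is not in $r(\g+w)$; hence $s\in L(\g+w)$, i.e.\ $s\in l(\g+w)=l(\g)+w$, so that $s-w\in l(\g)$. Applying this repeatedly to the singularity $\tilde p+w'\in l(\g)$, all of whose $\Z^2$‑translates are again singularities, I would obtain $\tilde p+w'-nw\in l(\g)\subseteq L(\g)$ for every $n\geq 0$.

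The contradiction then comes from Lemma~\ref{lemacone}. I would pick a unit vector $v$ close enough to $\rho_0/\|\rho_0\|$ that the half‑aperture $\theta$ of $C_{\tilde z_0}^{\rho_0}(v)$ satisfies $\tan\theta<\langle w,\rho_0^\perp\rangle/(1+|\langle w,\rho_0\rangle|)$. By Lemma~\ref{lemacone} one has $[\g]\subset B(L_0,C_{\tilde z_0}^{\rho_0}(v))\cup B(L_1,\tilde z_0)$, and this union is contained in a set of the form $\{x:|\langle x-\tilde z_0,\rho_0^\perp\rangle|\leq\tan\theta\,|\langle x-\tilde z_0,\rho_0\rangle|+K\}$ for a suitable constant $K$ (with $L_0$ from Lemma~\ref{lemasobra}). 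Consequently the region $\Omega=\{x:\langle x-\tilde z_0,\rho_0^\perp\rangle<-\tan\theta\,|\langle x-\tilde z_0,\rho_0\rangle|-K\}$ is disjoint from $[\g]$, connected and unbounded, and since $\g$ is a line (Corollary~\ref{corlinha}) directed by $\rho_0/\|\rho_0\|$ — so that $L(\g)$ is the side towards which $\rho_0^\perp$ points — the whole of $\Omega$ lies in $R(\g)$. But $\langle\tilde p+w'-nw-\tilde z_0,\rho_0^\perp\rangle=\langle\tilde p+w'-\tilde z_0,\rho_0^\perp\rangle-n\langle w,\rho_0^\perp\rangle\to-\infty$, while $|\langle\tilde p+w'-nw-\tilde z_0,\rho_0\rangle|$ grows at most linearly in $n$ with slope $|\langle w,\rho_0\rangle|$; by the choice of $\theta$ this gives $\tilde p+w'-nw\in\Omega\subset R(\g)$ for all large $n$, contradicting $\tilde p+w'-nw\in L(\g)$.

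The step I expect to be the crux is obtaining $l(\g)\cap r(\g+w)=\emptyset$: this is where the forcing‑type statement of Lemma~\ref{lemalrnovo2} and the absence of transverse self‑intersections do the real work, converting the contradiction hypothesis into the iterable implication ``$s\in l(\g)\Rightarrow s-w\in l(\g)$'' for singularities. By comparison the cone estimate is routine, the only care needed being to fix the aperture of the cone in terms of the (already fixed) vector $w$; and the hypothesis $\langle w,\rho_0^\perp\rangle>0$ enters precisely at the end, where it both drives $\langle\tilde p+w'-nw-\tilde z_0,\rho_0^\perp\rangle$ to $-\infty$ and makes the choice of $\theta$ possible.
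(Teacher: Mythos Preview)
Your proof is correct and follows essentially the same approach as the paper: assume the conclusion fails, use Lemma~\ref{lemalrnovo2} (and its symmetric version) to obtain the disjointness $l(\g)\cap r(\g+w)=\emptyset$, iterate to trap an infinite arithmetic progression of singularities on one side of $\g$, and contradict the cone estimate of Lemma~\ref{lemacone}. The only cosmetic differences are that the paper iterates in the $+w$ direction starting from $\tilde p+w'+w\in r(\g)$ (obtaining $(\tilde p+w')+nw\in r(\g)$ for all $n\ge 1$) whereas you iterate in the $-w$ direction starting from $\tilde p+w'\in l(\g)$, and that your derivation of $l(\g)\cap r(\g+w)=\emptyset$ is slightly more direct, contrasting $[\g+w]\cap l(\g)=\emptyset$ from Lemma~\ref{lemalrnovo2}(ii) directly against $[\g+w]\cap l(\g)\neq\emptyset$ from the symmetric version, rather than passing through Corollary~\ref{lemadiresq} as the paper does.
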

\begin{proof}
Suppose, by contradiction, that $(\tilde{p}+w')+w\in r(\g)$. Note that  $(\tilde{p}+w')+w\in l(\g+w)$, so we have that $l(\g+w)\cap r(\g)\neq\emptyset$ (see Figure \ref{fig46}). Therefore, by Lemma \ref{lemalrnovo2}, we have that $[\g+w]\cap r(\g)\neq\emptyset$.

\begin{figure} 
\centering
\includegraphics[scale = 0.75]{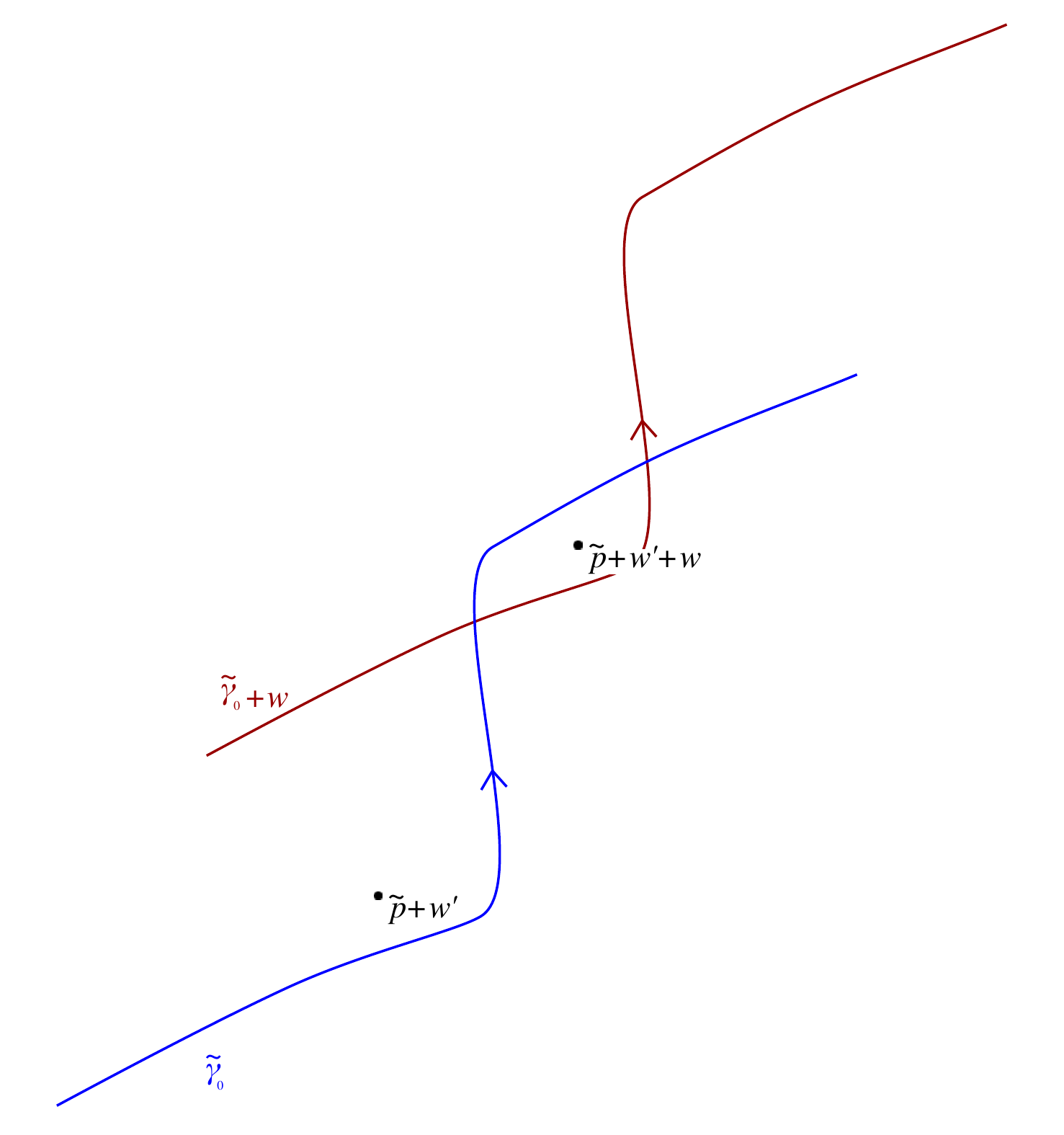}
\caption{Illustration of Lemma \ref{lemacorte}
\label{fig46}}
\end{figure}

We claim that this implies that $r(\g+w)\subset \R^2\setminus l(\g)$: indeed, if $r(\g+w)\cap l(\g)\neq\emptyset$, we have again by the Lemma \ref{lemalrnovo2} that $[\g+w]\cap l(\g)\neq\emptyset$. So, we get by Proposition \ref{proprl} that $\g\pitchfork_{\F}(\g+w)$, which is a contradiction by the Lemma \ref{lemaautogamma}, proving the claim. Therefore, since $(\tilde{p}+w')+2w$ is a singularity and so it must be contained in $r(\g)\cup l(\g)$, and since $(\tilde{p}+w')+2w\in r(\g+w)$ and $r(\g+w)\subset \R^2\setminus l(\g)$, we have that $(\tilde{p}+w')+2w\in r(\g).$ So,  by induction, we have that $(\tilde{p}+w')+nw\in r(\g)$, for all $n\in\N$.

Let us denote now $v'_w=w/||w||$. If $\theta$ is the smallest angle between $v'_w$ and the  line generated by $\rho_0$, let $v_w$ be a unit vector such that its angle to the line generated by $\rho_0$ is $\theta/2$. Thus, we have that for $n$ sufficiently large, $(\tilde{p}+w')+nw$ is in the connected component of $\R^2\setminus (B(L_0,C_{\tilde{z}_0}^{\rho_0}(v_w))\cup B(L_1,\tilde{z}_0))$ contained in the left of the line generated by $\rho_0$ passing through $\tilde{z}_0$ (see Figure \ref{figlemacorte}). But, by Lemma \ref{lemacone}, we have that $[\g]\subset B(L_0,C_{\tilde{z}_0}^{\rho_0}(v_w))\cup B(L_1,\tilde{z}_0)$, therefore $(\tilde{p}+w')+nw\in l(\g)$, which is a contradiction.

\begin{figure} 
\centering
\includegraphics[scale = 0.5]{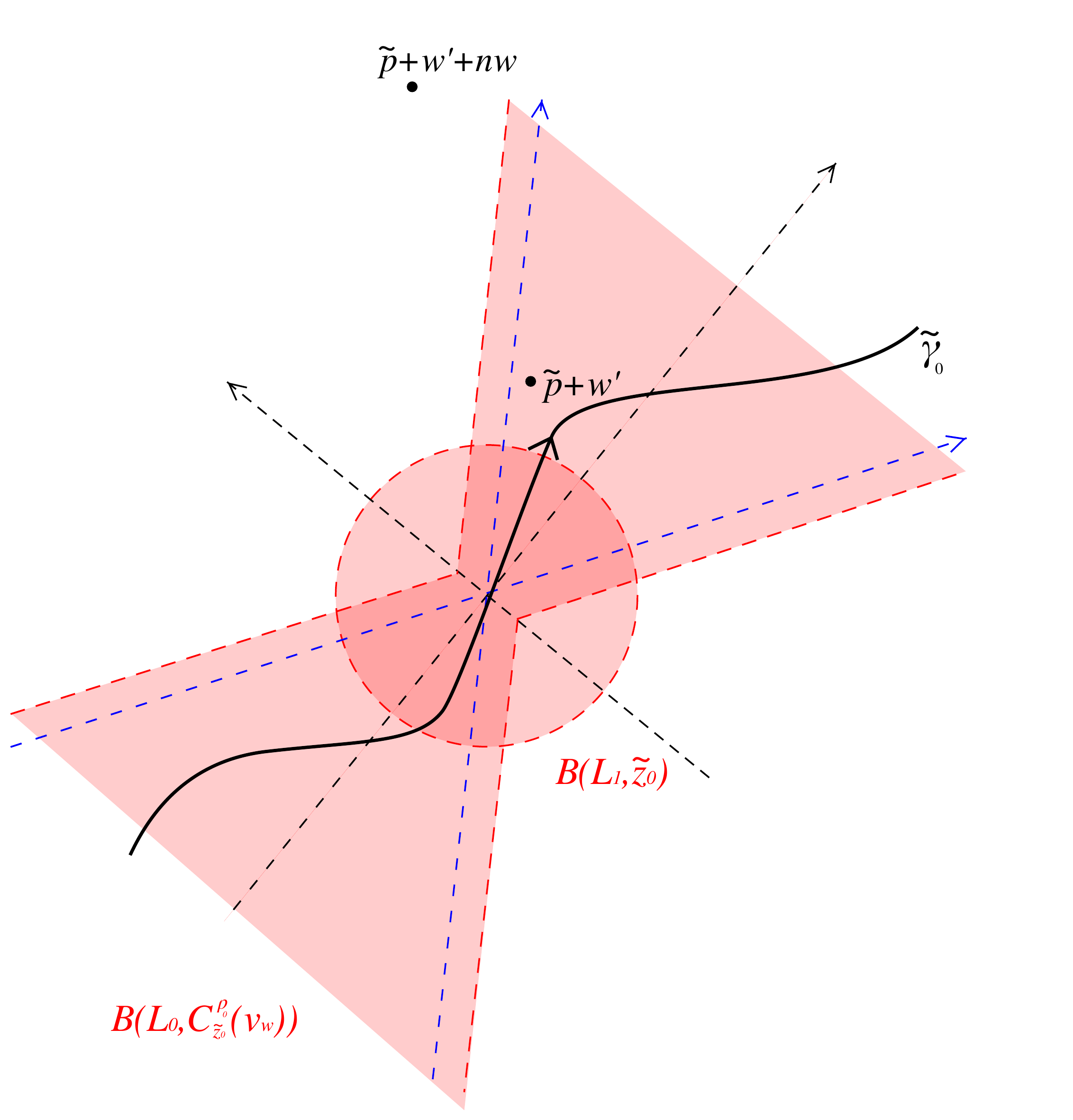}
\caption{Illustration of Lemma \ref{lemacorte}
\label{figlemacorte}}
\end{figure}
\end{proof}

Note that with an analogous demonstration we can obtain the following:

\begin{lema}\label{lemacorte2}
Let $w,w'\in\Z^2$ be such that $\langle w,\rho_0^\perp\rangle<0$ and $\tilde{p}+w'\in r(\g)$, then $(\tilde{p}+w')+w\in r(\g)$.
\end{lema}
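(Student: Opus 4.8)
The plan is to mirror the proof of Lemma \ref{lemacorte}, interchanging throughout the roles of $l(\cdot)$ and $r(\cdot)$ (equivalently of $L(\cdot)$ and $R(\cdot)$), which is exactly the change forced by flipping the sign condition from $\langle w,\rho_0^\perp\rangle>0$ to $\langle w,\rho_0^\perp\rangle<0$. Concretely, I would argue by contradiction and assume $(\tilde p+w')+w\in l(\g)$. Since $\tilde p+w'$ is a singularity of $\tilde\F$ lying in $r(\g)$ and $\tilde\F$ is $\Z^2$-invariant, the translate $(\tilde p+w')+w$ is a singularity lying in $r(\g)+w=r(\g+w)$; hence $l(\g)\cap r(\g+w)\neq\emptyset$. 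Applying the symmetric form of Lemma \ref{lemalrnovo2} recorded in Remark \ref{obslrnovo} (with hypothesis $r(\g+w)\cap l(\g)\neq\emptyset$), we obtain $[\g+w]\cap l(\g)\neq\emptyset$ together with $[\g+w]\cap r(\g)=\emptyset$.

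Next I would establish the inclusion $l(\g+w)\subset\R^2\setminus r(\g)$. Indeed, if some point of $l(\g+w)$ lay in $r(\g)$, then $r(\g)\cap l(\g+w)\neq\emptyset$, and Lemma \ref{lemalrnovo2} would give $[\g+w]\cap r(\g)\neq\emptyset$; combined with $[\g+w]\cap l(\g)\neq\emptyset$ from the previous paragraph, Corollary \ref{lemadiresq} would force $\g\pitchfork_{\tilde\F}(\g+w)$, contradicting Lemma \ref{lemaautogamma}. With this inclusion in hand, I would run an induction showing $(\tilde p+w')+nw\in l(\g)$ for every $n\in\N$: the base case $n=1$ is the contradiction hypothesis, and for the inductive step $(\tilde p+w')+(n+1)w=\big((\tilde p+w')+nw\big)+w$ is a singularity of $\tilde\F$, hence in $r(\g)\cup l(\g)$, while it also lies in $l(\g)+w=l(\g+w)\subset\R^2\setminus r(\g)$, so it must lie in $l(\g)$.

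Finally I would close with the same geometric argument as in Lemma \ref{lemacorte}. Writing $v'_w=w/\|w\|$, let $\theta$ be the smallest angle between $v'_w$ and the line spanned by $\rho_0$, and choose a unit vector $v_w$ whose angle to that line is $\theta/2$; since $\langle w,\rho_0^\perp\rangle<0$, the ray $\{\,tw : t\ge 0\,\}$ escapes into the half-plane lying to the right of the line through $\tilde z_0$ directed by $\rho_0$, so for $n$ large enough $(\tilde p+w')+nw$ lies in the connected component of $\R^2\setminus\big(B(L_0,C_{\tilde z_0}^{\rho_0}(v_w))\cup B(L_1,\tilde z_0)\big)$ contained in that half-plane. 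By Lemma \ref{lemacone}, $[\g]\subset B(L_0,C_{\tilde z_0}^{\rho_0}(v_w))\cup B(L_1,\tilde z_0)$, so that component is disjoint from $[\g]$ and lies in $R(\g)$, whence $(\tilde p+w')+nw\in r(\g)$, contradicting the conclusion of the induction.

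I do not expect a genuinely new difficulty here; the only points needing care are bookkeeping ones: applying Remark \ref{obslrnovo} (rather than Lemma \ref{lemalrnovo2}) at the first step so that the left/right reflection is carried through uniformly, and verifying that $\langle w,\rho_0^\perp\rangle<0$ indeed sends the ray $\{tw:t\ge 0\}$ to the right of the $\rho_0$-axis, so that the final contradiction lands on the side $r(\g)$ rather than $l(\g)$.
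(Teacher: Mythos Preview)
Your proposal is correct and is precisely the ``analogous demonstration'' the paper indicates: you mirror the proof of Lemma~\ref{lemacorte} by swapping $l(\cdot)$ and $r(\cdot)$, invoking Remark~\ref{obslrnovo} in place of Lemma~\ref{lemalrnovo2} at the first step (and vice versa at the second), and then closing with the same cone argument from Lemma~\ref{lemacone}, now with the sign $\langle w,\rho_0^\perp\rangle<0$ sending the ray $\{tw:t\ge 0\}$ to the right of the $\rho_0$-line so that the final contradiction lands in $r(\g)$.
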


Since the slope of $\rho_0^\perp$ is irrational, we can define the following order in $\Z^2$:

\begin{definicao}
$w\succ w^\prime \Leftrightarrow\langle w-w^\prime,\rho_0^\perp\rangle>0$.
\end{definicao} 

\begin{obs}
Note that such order is defined simply by projecting $\Z^2$ on the line generated by $\rho_0^\perp $ and using the natural order of such a line. In addition, we have that such projection is dense on the line.
\end{obs}

We will denote by $Z_r=\{w\in\Z^2 \mid \tilde{p}+w\in r(\g)\}$ and $Z_l=\{w\in\Z^2\mid \tilde{p}+w\in l(\g)\}$. Note that, since $(\tilde p+\Z^2)\cap[\g]=\emptyset$, we have that $\Z^2=Z_l\cup Z_r$. The following lemma will show that the projections of $Z_r$ and $Z_l$ on the line generated by $\rho_0^\perp $ are contained in two disjoint half-lines.

\begin{lema}\label{lemasemiretas}
The sets $Z_r$ and $Z_l$ defined above satisfy the following properties:
\begin{enumerate}[(i)]
\item If $w\in Z_l$ and $w'\succ w$, then $w'\in Z_l$;
\item If $w\in Z_r$ and $w'\prec w$, then $w'\in Z_r$.
\end{enumerate}
\end{lema}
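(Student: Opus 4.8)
The plan is to read Lemma~\ref{lemasemiretas} off directly from Lemmas~\ref{lemacorte} and~\ref{lemacorte2}, which already express exactly the required monotonicity once the translation vectors are relabelled. Recall $\tilde p\in l(\g)$ is the standing assumption, $Z_l=\{w\in\Z^2\mid \tilde p+w\in l(\g)\}$, $Z_r=\{w\in\Z^2\mid \tilde p+w\in r(\g)\}$, and $w'\succ w$ means $\langle w'-w,\rho_0^\perp\rangle>0$.

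For statement (i), I would start from $w\in Z_l$ (so $\tilde p+w\in l(\g)$) and $w'\succ w$, and set $u:=w'-w\in\Z^2$, which satisfies $\langle u,\rho_0^\perp\rangle>0$ by definition of $\succ$. Then I apply Lemma~\ref{lemacorte} with the vector playing the role of ``$w$'' there being $u$ and the vector playing the role of ``$w'$'' there being $w$: its two hypotheses, $\langle u,\rho_0^\perp\rangle>0$ and $\tilde p+w\in l(\g)$, hold, so the conclusion gives $(\tilde p+w)+u=\tilde p+w'\in l(\g)$, i.e.\ $w'\in Z_l$. Statement (ii) is the mirror image: from $w\in Z_r$ and $w'\prec w$, put $u:=w'-w$, note $\langle u,\rho_0^\perp\rangle<0$, and invoke Lemma~\ref{lemacorte2} with $u$ in the role of ``$w$'' and $w$ in the role of ``$w'$'' to obtain $\tilde p+w'\in r(\g)$, hence $w'\in Z_r$.

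There is essentially no substantive obstacle here: the lemma is a bookkeeping consequence of the two preceding translation lemmas, and the only point that needs a moment's care is the consistent relabelling of the ``increment'' vector versus the ``base translate'' vector in the statements of Lemmas~\ref{lemacorte} and~\ref{lemacorte2}. (All the real work sits in those two lemmas, which in turn rely on the cone confinement of $[\g]$ from Lemma~\ref{lemacone} and the absence of $\tilde\F$-transverse self-intersection of $\g$ from Lemma~\ref{lemaautogamma}.)
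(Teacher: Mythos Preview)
Your proof is correct and is essentially identical to the paper's own argument: both parts are obtained by applying Lemma~\ref{lemacorte} (resp.\ Lemma~\ref{lemacorte2}) with the increment vector $w'-w$ and base translate $w$, exactly as you describe. The only difference is cosmetic---the paper does not introduce the auxiliary name $u$ for $w'-w$.
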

\begin{proof}
To prove $(i)$, let us take $w\in Z_l$ and $w'\succ w$. So we have $\langle w'-w,\rho_0^\perp\rangle>0$. Since $w\in Z_l$, we have by definition that $\tilde{p}+w\in l(\g)$ and then, by Lemma \ref{lemacorte}, we have that $\tilde{p}+w+(w'-w)\in l(\g)$, and so $w'\in Z_l$.

The proof of $(ii)$ is analogous.
\end{proof}

\begin{lema}\label{lemacorte3}
Let $w\in\Z^2$ be such that $w\prec 0$. So there is $w^*\in\Z^2$ such that $w^*\in Z_l$ and $w^*+w\in Z_r$.
\end{lema}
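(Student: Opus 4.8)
My plan is to reduce the lemma, by an elementary manipulation of the order $\succ$, to the single geometric statement $Z_r\neq\emptyset$, so I would split the proof into two stages. \emph{Stage one (combinatorics):} First note $\Z^2=Z_l\sqcup Z_r$ disjointly, since each $\tilde p+w$ is a singularity of $\tilde\F$, hence lies in $\mathrm{dom}(\tilde\F)\setminus[\g]$ and therefore in exactly one of the disjoint sets $l(\g),r(\g)$; by hypothesis $0\in Z_l$. Since $w\prec 0$ means $\langle w,\rho_0^\perp\rangle<0$, we have $\langle nw,\rho_0^\perp\rangle\to-\infty$. Looking at the sequence $(nw)_{n\geq 0}$, which starts in $Z_l$: once we know some $u\in\Z^2$ lies in $Z_r$, then for $n$ large $\langle nw-u,\rho_0^\perp\rangle<0$, i.e. $nw\prec u$, so $nw\in Z_r$ by Lemma \ref{lemasemiretas}(ii); since $Z_l\cap Z_r=\emptyset$ this gives $nw\notin Z_l$ for all large $n$. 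Hence $\{n\geq 0:nw\in Z_l\}$ is nonempty and bounded, and for $m$ its maximum we get $w^*:=mw\in Z_l$ while $(m+1)w\notin Z_l$, so $w^*+w=(m+1)w\in Z_r$, as desired. It thus only remains to exhibit one element of $Z_r$.

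\emph{Stage two (geometry): $Z_r\neq\emptyset$.} By Lemma \ref{lemacone}, for a suitable unit vector $v$ the curve $[\g]$ is contained in $\mathcal N:=B(L_0,C_{\tilde{z}_0}^{\rho_0}(v))\cup B(L_1,\tilde{z}_0)$, a ``bowtie''-shaped neighborhood of the line $\R\rho_0+\tilde{z}_0$ that escapes to infinity only in the directions $\pm\rho_0$. The complement $\R^2\setminus\mathcal N$ has two unbounded, sector-like connected components $\mathcal N_+,\mathcal N_-$; each, being connected and disjoint from $[\g]$, lies in one of $R(\g),L(\g)$. I claim $\mathcal N_+$ and $\mathcal N_-$ lie in \emph{different} ones: looking at $\g$ far out in the $+\rho_0$ direction, where it is directed by $\rho_0$ and therefore runs near the central axis of the corresponding wing of $\mathcal N$ (hence away from the two lateral boundaries of that wing), the two local sides of $[\g]$ there are joined inside $\R^2\setminus[\g]$ to $\mathcal N_+$ and to $\mathcal N_-$ respectively; since those two local sides lie in different components of $\R^2\setminus[\g]$, so do $\mathcal N_+$ and $\mathcal N_-$. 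Consequently one of them, say $\mathcal N_\star$, is contained in $R(\g)$; being unbounded and open it meets $\pi^{-1}(p)=\tilde p+\Z^2$, so $\tilde p+u\in R(\g)$ for some $u\in\Z^2$, and since $\tilde p+u$ is a singularity of $\tilde\F$ it lies in $r(\g)$, i.e. $u\in Z_r$.

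Putting the stages together: take $u\in Z_r$ from Stage two, let $m=\max\{n\geq 0:nw\in Z_l\}$ (well defined by Stage one), and set $w^*=mw$; then $w^*\in Z_l$ and $w^*+w\in Z_r$. The only step requiring genuine care is the topological assertion in Stage two that $\mathcal N_+$ and $\mathcal N_-$ sit on opposite sides of $\g$ — visually evident but relying on the ``directed by $\rho_0$'' behavior of $\g$ near infinity (and on the freedom to take the cone $C_{\tilde{z}_0}^{\rho_0}(v)$ as thin as convenient); everything else is routine bookkeeping with $\succ$ via Lemma \ref{lemasemiretas}.
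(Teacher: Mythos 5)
Your argument is correct in substance, but it takes a genuinely different route from the paper's. The paper proves that $\langle Z_l,\rho_0^\perp\rangle$ is bounded below and $\langle Z_r,\rho_0^\perp\rangle$ bounded above, shows $\inf\langle Z_l,\rho_0^\perp\rangle=\sup\langle Z_r,\rho_0^\perp\rangle$ using Lemma~\ref{lemacorte} together with the density of $\langle\Z^2,\rho_0^\perp\rangle$ in $\R$, and then chooses $w^*\in Z_l$ with projection close enough to this common value. Your Stage one replaces all of that by walking along the ray $(nw)_{n\ge 0}$ and taking the last index in $Z_l$, which needs only Lemma~\ref{lemasemiretas} and the single fact $Z_r\neq\emptyset$; this is cleaner and dispenses with the inf/sup and density bookkeeping, but it shifts all the weight onto Stage two. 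Notably, the paper needs the same input: its assertion that $\langle Z_l,\rho_0^\perp\rangle=\langle\Z^2,\rho_0^\perp\rangle$ is ``a contradiction'' is one only because $Z_r\neq\emptyset$, which the authors leave implicit; so your Stage two makes explicit a point the paper glosses over, using the same cone Lemma~\ref{lemacone} that the paper invokes at the end of Lemma~\ref{lemacorte}.

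Three small caveats on your write-up. Singularities lie in $\R^2\setminus\mathrm{dom}(\tilde\F)$, not in $\mathrm{dom}(\tilde\F)\setminus[\g]$ (harmless: all you need is $(\tilde p+\Z^2)\cap[\g]=\emptyset$). ``Unbounded and open'' alone does not force $\mathcal{N}_\star$ to meet $\tilde p+\Z^2$; you should say the far sectors contain arbitrarily large balls (true here). Most importantly, your justification that $\mathcal{N}_+$ and $\mathcal{N}_-$ lie on opposite sides of $\g$ is the one genuinely sketchy step: a path from a local side of a far point of $[\g]$ toward $\mathcal{N}_\pm$ could a priori re-cross $[\g]$, since $\g$ is only confined to the cone, whose width grows linearly. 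The clean argument uses that $\g$ is directed by $\rho_0$: compactifying $\R^2$ by the circle of directions, $\overline{[\g]}$ becomes a crosscut of the disk with endpoints $\pm\rho_0$, so the two components of its complement have closures meeting the boundary circle in the arcs through $\rho_0^\perp$ and $-\rho_0^\perp$ respectively; hence the rays $\tilde z_0\pm s\rho_0^\perp$, $s$ large, and with them $\mathcal{N}_+$ and $\mathcal{N}_-$, lie in different components of $\R^2\setminus[\g]$. Since the paper itself uses the directional version of this fact without proof (``therefore $(\tilde p+w')+nw\in l(\g)$'' in Lemma~\ref{lemacorte}), your proof, with this remark added, is complete and at least as rigorous as the original.
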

\begin{proof}
Let us denote $\langle A,\rho_0^\perp\rangle=\{\langle w,\rho_0^\perp\rangle\mid w\in\ A\}$. Using such notation, we have that $\langle \Z^2,\rho_0^\perp\rangle=\langle Z_l,\rho_0^\perp\rangle\cup\langle Z_r,\rho_0^\perp\rangle$ and $\langle \Z^2,\rho_0^\perp\rangle$ is dense in $\R$. Let us prove that $\langle Z_l,\rho_0^\perp\rangle$ is bounded from below. By contradiction, if $\langle Z_l,\rho_0^\perp\rangle$ is unbounded from below, there are $w_n\in\Z^2, n\in\N$, such that $\langle w_n,\rho_0^\perp\rangle\to-\infty$. But, by Lemma \ref{lemasemiretas}, if $w_n\in Z_l$, then $w'\in Z_l$, if $w'\succ w_n$, and so we have that $\langle Z_l,\rho_0^\perp\rangle=\langle \Z^2,\rho_0^\perp\rangle$, which is a contradiction. Therefore $\langle Z_l,\rho_0^\perp\rangle$ is bounded from below. Analogously, we have that $\langle Z_r,\rho_0^\perp\rangle$ is bounded from above. Let us denote $l^*=\inf\langle Z_l,\rho_0^\perp\rangle$ and $r^*=\sup\langle Z_r,\rho_0^\perp\rangle$. We claim that $l^*=r^*$. In fact, if $r^*<l^*$, we have a contradiction, because we would have $(r^*,l^*)\cap\langle \Z^2,\rho_0^\perp\rangle=\emptyset$, and $\langle \Z^2,\rho_0^\perp\rangle$ is dense in $\R$. If $l^*<r^*$, we have that there are $w_l\in Z_l$ and $w_r\in Z_r$ such that $w_r-w_l\succ 0$. Thus, by the Lemma \ref{lemacorte}, we have that $w_l+(w_r-w_l)\in l(\g)$, which is a contradiction. Therefore, $l^*=r^*$. Since $l^*=\inf \langle Z_l,\rho_0^\perp\rangle$, we have that there is a sequence $w_n\in Z_l$ such that $\langle w_n,\rho_0^\perp\rangle\to l^*$. So, since $\langle w,\rho_0^\perp\rangle<0$, there is $n'\in \N$ such that $\langle w_{n'}+w,\rho_0^\perp\rangle<l^*$. Thus, making $w^*=w_{n'}$, we have that $w^*\in Z_l$ and $w^*+w\in Z_r$, proving the lemma.
\end{proof}

\begin{obs}\label{obscorte}
Analogously to the previous Lemma, given $w\succ 0$, we can obtain $w^{**}\in\Z^2$ such that $w^{**}\in Z_r$ and $w^{**}+w\in Z_l$
\end{obs}

\begin{lema}\label{lema1}
Let $w\in\Z^2_*$. If $w\succ 0$, then for every $M>0$ there exists $t_M^{+}>M$ and $t_M^{-}<-M$ such that both $\g(t_M^{-})$ and $\g(t_M^{+})$ lie in $r(\g+w)$. Likewise, there exists $s_M^{+}>M$ and $s_M^{-}<-M$ such that both $\g+w(s_M^{-})$ and $\g+w(s_M^{+})$ lie in $l(\g)$. In particular we have that $\g$ and $\g+w$ are not $\tilde\F$-equivalent.
\end{lema}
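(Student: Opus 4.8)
The plan is to separate the statement into a combinatorial core, a short reduction, and a propagation argument by recurrence.

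First I would extract the qualitative information from the cutting lemmas. Since $w\succ 0$, Remark~\ref{obscorte} gives $w^{**}\in\Z^2$ with $\tilde p+w^{**}\in r(\g)$ and $\tilde p+w^{**}+w\in l(\g)$; as the foliated sides are equivariant under integer translations, $r(\g+w)=r(\g)+w$ and $l(\g+w)=l(\g)+w$, so $\tilde p+w^{**}+w\in r(\g+w)\cap l(\g)$, hence $r(\g+w)\cap l(\g)\neq\emptyset$. Feeding this into the symmetric form of Lemma~\ref{lemalrnovo2} (Remark~\ref{obslrnovo}) yields $[\g]\cap r(\g+w)\neq\emptyset$, $[\g]\cap l(\g+w)=\emptyset$, $[\g+w]\cap l(\g)\neq\emptyset$ and $[\g+w]\cap r(\g)=\emptyset$. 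The last clause of the lemma is then immediate: if $\g\sim_{\tilde\F}\g+w$ then $r(\g+w)=r(\g)$, but no point of $[\g]$ can lie in $r(\g)$, since its own leaf meets $[\g]$, contradicting $[\g]\cap r(\g+w)\neq\emptyset$. So this part needs nothing beyond the first step.

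Next I would reduce the quantitative claims to a disjointness statement. Because $[\g]\cap l(\g+w)=\emptyset$, whenever $\phi_{\g(t)}$ is disjoint from $[\g+w]$ the leaf $\phi_{\g(t)}$ is strictly on one side of $\g+w$, necessarily the right one, i.e.\ $\g(t)\in r(\g+w)$; symmetrically, since $[\g+w]\cap r(\g)=\emptyset$, whenever $\phi_{(\g+w)(s)}$ is disjoint from $[\g]$ one has $(\g+w)(s)\in l(\g)$. Hence it suffices to produce, for every $M>0$, parameters $t>M$ and $t<-M$ with $\phi_{\g(t)}$ disjoint from $[\g+w]$, and parameters $s>M$, $s<-M$ with $\phi_{(\g+w)(s)}$ disjoint from $[\g]$.

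For this I would run a recurrence argument patterned on the proof of Lemma~\ref{lemaautogamma}, exploiting the sign control at our disposal. Since $\tilde z_0$ is the point of Proposition~\ref{proppqeps}, taking the sign $\epsilon=1$ there are sequences $(p_l,q_l)$ in $\Z^2\times\N$ with $q_l\to+\infty$, $\tilde f^{q_l}(\tilde z_0)-\tilde z_0-p_l\to 0$ and $\langle p_l,\rho_0^\perp\rangle>0$, i.e.\ $p_l\succ 0$, together with analogous sequences $(p'_l,q'_l)$ for the backward iterates $\tilde f^{-q'_l}$. Pick $s_0$ with $(\g+w)(s_0)\in l(\g)$ and set $\phi_0=\phi_{(\g+w)(s_0)}$, so $\phi_0\subset L(\g)$; fix $N> |s_0|$. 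From $\tilde f^{-N+q_l}(\tilde z_0)-p_l\to\tilde f^{-N}(\tilde z_0)$ and Lemma~\ref{lemaestabilidade} one gets, for $l$ large, an interval $I'\subset[q_l-N-1,\,q_l+N+1]$ with $\g|_{[-N,N]}\sim_{\tilde\F}(\g-p_l)|_{I'}$; translating by $w$ gives $(\g+w)|_{I'}\sim_{\tilde\F}(\g+w)|_{[-N,N]}+p_l$, so there is $s_l\in I'$, with $s_l\geq q_l-N-1\to+\infty$, such that $\phi_{(\g+w)(s_l)}=\phi_0+p_l$. Provided $\phi_0+p_l$ is disjoint from $[\g]$, this gives $(\g+w)(s_l)\in l(\g)$ with $s_l>M$ for $l$ large; the negative parameter comes the same way from $(p'_l,q'_l)$, and the two statements about $\g$ and $r(\g+w)$ follow by the identical scheme with $\g$ and $\g+w$ interchanged, starting from a point $\psi_0=\phi_{\g(t_0)}\subset R(\g+w)$ and again using $\epsilon=1$; in every case the step reduces to the same disjointness assertion.

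The step I expect to be the real obstacle is precisely that disjointness assertion, namely that the translate $\phi_0+p_l$ of a leaf contained in $L(\g)$ cannot meet $[\g]$ — equivalently that $[\g-p_l]$ is disjoint from $\phi_0\subset L(\g)$, which in turn would follow from $[\g]\subset R(\g+p_l)$. This is exactly where $p_l\succ 0$ must be used: morally $-p_l$ points ``forward and to the right'' of $\g$, so $\g-p_l$ should lie strictly inside $R(\g)$. Turning this into a proof requires upgrading the first-step conclusion ``no leaf of $\g$ lies strictly left of $\g+p_l$'' to the pointwise statement ``no point of $\g$ lies strictly left of $\g+p_l$'', and here I would lean on Lemma~\ref{lemaautogamma} (absence of $\tilde\F$-transverse intersections between $\g$ and any integer translate, via Proposition~\ref{proprl}), on the cutting Lemmas~\ref{lemacorte} and~\ref{lemacorte2}, on Lemmas~\ref{lemagamaw} and~\ref{lemacompconexa} to control the components of $R(\g)\cap L(\g+p_l)$, and on the cone localisation of Lemma~\ref{lemacone}, which confines $\g$ and all of its integer translates to narrow cones about the $\rho_0$-direction. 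Everything else in the argument is routine bookkeeping.
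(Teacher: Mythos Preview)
Your overall approach is the paper's approach: extract the side information from Lemma~\ref{lemalrnovo2} (in its symmetric form), then propagate by recurrence via Proposition~\ref{proppqeps} and Lemma~\ref{lemaestabilidade}. Your first two paragraphs and the recurrence paragraph are correct and match the paper almost step for step.

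Where you go astray is in your assessment of the ``real obstacle''. You need $\phi_0+p_l$ to be disjoint from $[\g]$, where $\phi_0\subset l(\g)$ and $p_l\succ 0$. You then try to upgrade to the \emph{pointwise} inclusion $[\g]\subset R(\g+p_l)$, and this is where you reach for the heavy artillery (Lemmas~\ref{lemagamaw}, \ref{lemacompconexa}, \ref{lemacone}). But the pointwise statement is unnecessary: the disjointness you need is an immediate consequence of your own first step. Indeed, $\phi_0\subset l(\g)$ gives $\phi_0+p_l\subset l(\g)+p_l=l(\g+p_l)$; so if $\g(t)\in\phi_0+p_l$ for some $t$, then $\phi_{\g(t)}=\phi_0+p_l\subset l(\g+p_l)$, i.e.\ $\g(t)\in l(\g+p_l)$. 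But your first paragraph, applied verbatim with $p_l$ in place of $w$ (legitimate since $p_l\succ 0$), already yields $[\g]\cap l(\g+p_l)=\emptyset$. That is the whole argument; this is exactly what the paper's one-line ``note that, if $w^*\succ 0$\ldots'' is encoding (modulo a somewhat garbled chain of inclusions there). The dual cases for $t<-M$ and for $\g$ in place of $\g+w$ go through identically using the backward sequence $(p'_l,q'_l)$ and the other half of the first-step conclusions.

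So there is no genuine gap in your plan --- only a misdiagnosis of difficulty at the end. Once you see that the monotonicity you need is the \emph{leaf-wise} statement $l(\g+p_l)\subset l(\g)$ rather than a pointwise one, and that this drops out of Remark~\ref{obslrnovo} applied to $p_l$, the proof closes in two lines.
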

\begin{proof}
Let's look at the case where $w\prec 0$. By the Lemma \ref{lemacorte3}, we have that there is $w^*\in\Z^2$ such that $\tilde p+w^*\in l(\g)$ and $\tilde p+w^*+w\in r(\g)$, and therefore $\tilde p+w^*+w\in r(\g)\cap l(\g+w)$. By Lemma \ref{lemalrnovo2}, we have that $[\g]\cap l(\g+w)\neq\emptyset$ and $[\g+w]\cap r(\g)\neq\emptyset$, so one can find some $t_0, s_0$ such that $\g(t_0)\in l(\g+w)$ and $\g+w(s_0)\in r(\g)$. Fix $M>0$ and let us show the existence of $t_M^{+}$, the other cases are similar.  Let $\phi_0$ be the leaf that passes through $\g(t_0)$ and note that, if $w^*\succ 0$ is in $\Z^2$, then $\phi_0\subset l(\g+w+w^*)\subset l(\g+w)$. Let $N>|t_0$ be an integer. Using Proposition \ref{proppqeps}, one can find a sequence $(p_l, q_l)$ in $\Z^2\times\N$ with $q_l$ going to infinity and $p_l\succ 0$ such that $\tilde f^{q_l}(\tilde z_0)-p_l$ converges to $\tilde z_0$. This implies, by Lemma \ref{lemaestabilidade}, that for sufficiently large $l$ the path $\g-p_l\mid_{[q_l-N-1,q_l+ N+1]}$ contains a subpath equivalent to $\g\mid_{[-N,N]}$. But since  $\g\mid_{[-N,N]}$intersects $\phi_0$, $\g-p_l\mid_{[q_l-N-1,q_l+ N+1]}$ must also do so, and therefore $\g\mid_{[q_l-N-1,q_l+ N+1]}$ intersects $\phi_0+p_l$, which lies in $l(\g+w)$. It suffices then to take $l$ such that $q_l>M-N-1$. 

The proof for $w\succ 0$ is analogous, using the Remark \ref{obscorte}.
\end{proof}

\begin{lema}\label{lema2}
Given $t_1>0$, there is $0<\varepsilon<\frac{1}{2}$ such that, if $\tilde z'\in B(\varepsilon,\tilde z_0)$, then every element of $\tilde{I}^{\Z}_{\tilde{\F}}(\tilde z')$ contains a sub-path $\tilde\F$-equivalent to $\g\vert_{[0, t_1]}$. 
\end{lema}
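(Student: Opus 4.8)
The plan is to obtain this as an essentially immediate consequence of the stability of admissible transverse trajectories, Lemma~\ref{lemaestabilidade}, applied at the point $\tilde z_0$. Note first that $\tilde z_0\in\textrm{dom}(\tilde I)$ (it has rotation vector $\rho_0\neq 0$, hence is not a fixed point), so the transverse--trajectory machinery is available near $\tilde z_0$. I would fix an integer $n\geq t_1$; then, since $\g$ was parametrized so that $\g(k)=\tilde f^k(\tilde z_0)$ for every $k\in\Z$, the restriction $\g|_{[0,n]}$ is a representative of $\tilde I^n_{\tilde\F}(\tilde z_0)$, and therefore $\g|_{[0,t_1]}$ is a subpath of that representative.

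Next I would invoke the second (``furthermore'') assertion of Lemma~\ref{lemaestabilidade} at $z=\tilde z_0$ with this $n$: it provides a neighbourhood $W$ of $\tilde z_0$ such that, for all $z',z''\in W$, the path $\tilde I^n_{\tilde\F}(z')$ is $\tilde\F$-equivalent to a subpath of $\tilde I^{n+2}_{\tilde\F}(\tilde f^{-1}(z''))$. Specializing to $z'=\tilde z_0$, for every $z''\in W$ the path $\tilde I^n_{\tilde\F}(\tilde z_0)$, and hence its subpath $\g|_{[0,t_1]}$, is $\tilde\F$-equivalent to a subpath of $\tilde I^{n+2}_{\tilde\F}(\tilde f^{-1}(z''))$; here I use the elementary observation, immediate from Definition~\ref{defequivalence} and transitivity of $\sim_{\tilde\F}$, that if $\beta$ is a subpath of $\gamma$ and $\gamma$ is $\tilde\F$-equivalent to a subpath of $\delta$, then $\beta$ is $\tilde\F$-equivalent to a subpath of $\delta$. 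I would then choose $0<\varepsilon<\frac{1}{2}$ small enough that $B(\varepsilon,\tilde z_0)\subset W$. For any $\tilde z'\in B(\varepsilon,\tilde z_0)$, taking $z''=\tilde z'$ shows that $\g|_{[0,t_1]}$ is $\tilde\F$-equivalent to a subpath of $\tilde I^{n+2}_{\tilde\F}(\tilde f^{-1}(\tilde z'))=\Pi_{-1\leq k\leq n}\tilde I_{\tilde\F}(f^k(\tilde z'))$, which is a subpath of every representative of $\tilde I^\Z_{\tilde\F}(\tilde z')$. Since the property of ``containing a subpath $\tilde\F$-equivalent to a fixed path'' passes from one transverse path to any $\tilde\F$-equivalent transverse path (again by Definition~\ref{defequivalence} and transitivity of $\sim_{\tilde\F}$), every element of $\tilde I^\Z_{\tilde\F}(\tilde z')$ contains a subpath $\tilde\F$-equivalent to $\g|_{[0,t_1]}$, as claimed.

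I do not expect any real obstacle here: the lemma is just a corollary of Lemma~\ref{lemaestabilidade}, whose ``furthermore'' clause is designed precisely so that the effect of the two endpoints of $\g|_{[0,t_1]}$ is absorbed by extending the trajectory by one step on each side. The only points requiring a little care are the two bookkeeping facts quoted above about subpaths and $\tilde\F$-equivalence classes, together with the observation that $\tilde I^{n+2}_{\tilde\F}(\tilde f^{-1}(\tilde z'))$ occupies the block of indices from $-1$ to $n$ inside $\tilde I^\Z_{\tilde\F}(\tilde z')$, so that the statement about the full bi-infinite trajectory indeed follows.
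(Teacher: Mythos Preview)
Your proposal is correct and follows exactly the paper's approach: the paper's proof is the single sentence ``It follows directly by Lemma~\ref{lemaestabilidade},'' and you have simply spelled out this direct consequence, correctly using the ``furthermore'' clause of Lemma~\ref{lemaestabilidade} (which is needed since $\g|_{[0,t_1]}$ starts at the endpoint $\tilde z_0$ rather than at an interior parameter).
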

\begin{proof}
It follows directly by Lemma \ref{lemaestabilidade}.
\end{proof}

In what follows, we will say that $f$ \emph{does not have bounded deviation in the positive direction of $\rho_0^\perp$} if there are $\tilde x_k \in\R^2$ and $(n_k)_{k\in\N}$ an increasing sequence such that $\lim_{k\to\infty}\langle \tilde{f}^{n_k}(\tilde x_k)-\tilde x_k,\rho_0^\perp\rangle=+\infty$. Analogously, we will say that $f$ \emph{does not have bounded deviation in the negative direction of $\rho_0^\perp$} if there are $\tilde x_k$  and $(n_k)_{k\in\N}$ as before such that the previous limit is equal to $-\infty$. Note that if $f$ does not have bounded deviation in the direction of $\rho_0^\perp$ so either $f$ does not have bounded deviation in the positive direction of $\rho_0^\perp$ or $f$ does not have bounded deviation in the negative direction of $\rho_0^\perp$.

\begin{lema}\label{lema3}
If $f$ does not have bounded deviation in the positive direction of $\rho_0^{\perp}$, then, given $0<\varepsilon<\frac{1}{2}$, there are $\tilde x\in\R^2$, $N\in\N$ and $P\in\Z^2$ such that:
\begin{enumerate}[(i)]
\item $\tilde x\in B(\varepsilon, \tilde z_0)$
\item $P \succ (-2,0)$
\item $\tilde f^{N}(\tilde x)\in B(\varepsilon, \tilde z_0+P)$
\end{enumerate}
\end{lema}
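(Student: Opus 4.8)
The plan is to produce the triple $(\tilde x,N,P)$ by feeding a large--displacement orbit segment, supplied by the failure of bounded deviation, into the ``recurrence cage'' around $\tilde z_0$ coming from the full essentiality of $z_0$. Since $f$ has no bounded deviation in the positive direction of $\rho_0^\perp$, for any threshold $M_*>0$ (to be fixed at the end) we may choose $\tilde x_*\in\R^2$ and $n_*\in\N$ with $\langle\tilde f^{n_*}(\tilde x_*)-\tilde x_*,\rho_0^\perp\rangle>M_*$; since the displacement $\tilde f^{m}(\cdot)-(\cdot)$ is invariant under integer translations we may take $\tilde x_*\in[0,1]^2$, and (the set of such points being open and $\textrm{sing}(\tilde\F)$ nowhere dense) that $\tilde x_*\in\textrm{dom}(\tilde I)$. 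By Lemma \ref{lemasobra}, the transverse trajectory $\tilde I^{n_*}_{\tilde\F}(\tilde x_*)$ has a representative all of whose consecutive steps have diameter $<L_0$; in particular the orbit $\tilde x_*,\tilde f(\tilde x_*),\dots,\tilde f^{n_*}(\tilde x_*)$ has uniformly bounded jumps while its $\rho_0^\perp$--extent exceeds $M_*$.

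By Lemma \ref{lemaessencial}, $z_0$ is fully essential and $U_\varepsilon=\bigcup_{i=0}^{\infty}f^i(\pi(B(\varepsilon,\tilde z_0)))$ is fully essential, so $\T\setminus U_\varepsilon$ is a compact inessential set, $\R^2\setminus\pi^{-1}(U_\varepsilon)$ is a $\Z^2$--periodic family of sets of uniformly bounded diameter, and $\pi^{-1}(U_\varepsilon)$ is forward invariant under $\tilde f$. I would then show that the orbit of $\tilde x_*$ visits a lattice translate of $B(\varepsilon,\tilde z_0)$ at two (possibly negative) times $a<b$ between which the $\rho_0^\perp$--displacement is at least $M_*/2$: take $a$ to be the last such visit before the running quantity $\langle\tilde f^{t}(\tilde x_*)-\tilde x_*,\rho_0^\perp\rangle$ first reaches level $M_*/2$, and $b$ the first such visit after it first reaches level $M_*$. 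Here one uses that, since $\pi^{-1}(U_\varepsilon)$ meets every sufficiently tall strip $\{\,\langle\cdot,\rho_0^\perp\rangle\in[\ell,\ell+c]\,\}$ in a set the bounded--jump orbit cannot dodge, and any point of $\pi^{-1}(U_\varepsilon)$ is carried into $\pi^{-1}(\pi(B(\varepsilon,\tilde z_0)))$ by a nonnegative power of $\tilde f^{-1}$, such visits $a,b$ exist; moreover the orbit arc on $[a,b]$ is then confined to an $\rho_0^\perp$--band of width $\le M_*/2+O(1)$, which forces the gain on $[a,b]$ to lie in $[M_*/2,\,M_*/2+O(1)]$. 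Writing $\tilde f^a(\tilde x_*)\in B(\varepsilon,\tilde z_0+Q_1)$, $\tilde f^b(\tilde x_*)\in B(\varepsilon,\tilde z_0+Q_2)$ with $Q_i\in\Z^2$, and setting $\tilde x=\tilde f^a(\tilde x_*)-Q_1\in B(\varepsilon,\tilde z_0)$, $N=b-a\ge 1$, $P=Q_2-Q_1$, one gets $\tilde f^{N}(\tilde x)\in B(\varepsilon,\tilde z_0+P)$ and $\langle P,\rho_0^\perp\rangle=\langle\tilde f^b(\tilde x_*)-\tilde f^a(\tilde x_*),\rho_0^\perp\rangle+O(\varepsilon)\ge M_*/2-O(\varepsilon)>2(\rho_0)_2$, i.e. $P\succ(-2,0)$, provided $M_*$ was taken large enough.

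The main obstacle is exactly this middle step: making sure the long orbit supplied by the hypothesis actually passes through $\pi^{-1}(U_\varepsilon)$, and then through a lattice translate of $B(\varepsilon,\tilde z_0)$, at moments where the accumulated $\rho_0^\perp$--displacement is still of order $M_*$, so that $P$ genuinely points ``up'' by more than $2(\rho_0)_2$ instead of having its gain cancelled by intermediate descents of the displacement; this needs the uniform boundedness of the components of $\R^2\setminus\pi^{-1}(U_\varepsilon)$ from Lemma \ref{lemaessencial} together with the careful choice of $a,b$ above, and it must be checked that the orbit is not trapped forever in the inessential complement $\T\setminus U_\varepsilon$ (which, given $\rho(\tilde f)\cap\Q^2=\{(0,0)\}$, would be incompatible with the large displacement). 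A fallback is a forcing route: extract from the orbit of $\tilde x_*$ an arc confined to a bounded $\rho_0^\perp$--band of width $\le M_*/2$ but with $\rho_0^\perp$--gain $\ge M_*/2$, translate it by a lattice vector so that, using that $\g$ is a line (Corollary \ref{corlinha}) and Lemma \ref{lemacone}, it starts in $r(\g)$ and meets $l(\g+w)$ for some $w$ with $\langle w,\rho_0^\perp\rangle\approx M_*/4$, apply Proposition \ref{proprl} and Lemma \ref{lemacompacto} to obtain an $\tilde\F$--transverse intersection with $\g+w$, then use Proposition \ref{propadm} with Lemma \ref{lema2} to reroute the beginning of the resulting admissible path into $B(\varepsilon,\tilde z_0)$, and conclude with one further $U_\varepsilon$--return (or an application of the recurrence of $z_0$) to land near $\tilde z_0+P$.
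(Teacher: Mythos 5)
Your overall strategy (use the failure of bounded deviation to get a long orbit with large $\rho_0^\perp$--displacement, and use the full essentiality of $z_0$ to force that orbit to pass through integer translates of $B(\varepsilon,\tilde z_0)$ at two moments whose relative displacement is still large) is the right idea, but the step you yourself flag as ``the main obstacle'' is exactly where the argument breaks, and the claims you offer to bridge it are not justified. First, it is not true in general that inessentiality of $\T\setminus U_\varepsilon$ makes $\R^2\setminus\pi^{-1}(U_\varepsilon)$ a $\Z^2$--periodic family of sets of \emph{uniformly bounded} diameter: an inessential closed set can have lift components of arbitrarily large (even unbounded) diameter, so your ``strip the orbit cannot dodge'' argument has no foundation as stated. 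Second, and more seriously, even granting that the orbit meets $\pi^{-1}(U_\varepsilon)$, a point of $U_\varepsilon$ only lies in $f^{i}(\pi(B(\varepsilon,\tilde z_0)))$ for \emph{some} $i\geq 0$ with no uniform bound; pulling back by $\tilde f^{-i}$ with uncontrolled $i$ destroys the displacement estimate, so your times $a,b$ at which the orbit sits in lattice translates of $B(\varepsilon,\tilde z_0)$ (rather than merely in $\pi^{-1}(U_\varepsilon)$) do not exist by the argument given, and consequently neither the bound $\langle P,\rho_0^\perp\rangle\geq M_*/2-O(\varepsilon)$ nor $N\geq 1$ follows. The fallback ``forcing route'' you sketch is too vague to repair this and does not obviously produce the purely orbit-theoretic conclusion of the lemma.

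The paper closes precisely this gap with Proposition \ref{propguelman2015rotation}, which you never actually invoke in your main route: applied to $O=B(\varepsilon,\tilde z_0)$ (legitimate since $\varepsilon<\tfrac12$ makes $\overline{\pi(B(\varepsilon,\tilde z_0))}$ inessential and Lemma \ref{lemaessencial} makes $U_\varepsilon$ fully essential), it yields $M\in\N$ and a compact $K$ with $[0,1]^2$ inside a bounded component $A$ of $\R^2\setminus K$ and $K\subset\bigcup_{|i|\leq M,\,\|v\|_\infty\leq M}\bigl(\tilde f^{i}(B(\varepsilon,\tilde z_0))+v\bigr)$; the crucial point is the \emph{uniform} bound $|i|\leq M$, $\|v\|_\infty\leq M$. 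Then one chooses, from the unbounded positive deviation, $l>2M$ and $P'$ with $\langle P',\rho_0^\perp\rangle$ large compared with $2M$ such that $\tilde f^{l}([0,1]^2)\cap([0,1]^2+P')\neq\emptyset$; since $[0,1]^2\subset A$ and $A$ is bounded this forces $\tilde f^{l}(\partial A)\cap(\partial A+P')\neq\emptyset$, hence $\tilde f^{l}(K)\cap(K+P')\neq\emptyset$, and unwinding the covering of $K$ gives $n_1,n_2$ with $|n_i|<M$ and $v_1,v_2$ with $\|v_i\|_\infty<M$ so that $N=l+n_1-n_2>0$ and $P=v_2-v_1+P'$ satisfy (i)--(iii), all corrections being bounded by $M$. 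Your proposal would need either to prove Proposition \ref{propguelman2015rotation} from scratch or to supply an equivalent uniform-return mechanism; as written, the lemma is not established.
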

\begin{proof}
We have by the Lemma \ref{lemaessencial} that the set $U_{\varepsilon}=\bigcup_{i=0}^{\infty} f^{i}(\pi(B(\varepsilon, \tilde{z}_0)))$ is fully essential, and since $\varepsilon<\frac{1}{2}$, we have that $\overline{\pi(B(\varepsilon,\tilde{z}_0))}$ is inessential. So, applying the Proposition \ref{propguelman2015rotation}, we get a compact set of the plane $K$ and $M\in\N$ such that $[0,1]^2$ is contained in a bounded connected component of $\R^2\setminus K$, which we shall denote by $A$, and $K\subset\bigcup_{|i|\leq M,\, ||v||_{\infty}\leq M}\left(\tilde{f}^i(B(\varepsilon, \tilde{z}_0))+v\right)$. Since $f$ does not have bounded deviation in the positive direction of $\rho_0^{\perp}$, there are $P'\in\Z^2$ and $l\in\N$ such that $l>2M$, $\langle P', \rho_0^{\perp}\rangle > \langle -(2,0), \rho_0^{\perp}\rangle+2M$ and $\tilde{f}^{l}([0,1]^2)\cap([0,1]^2+P')\neq\emptyset$. Since $[0,1]^2\subset A$ and $\tilde{f}^{l}([0,1]^2)\cap([0,1]^2+P')\neq\emptyset$, we have that $\tilde{f}^{l}(A)$ intersects $A+P'$. Since $A$ is bounded, we have that $\tilde{f}^{l}(\partial A)\cap (\partial A+P')\neq\emptyset$, and since $\partial A\subset K$, we get $\tilde{f}^{l}(K)\cap(K+P')\neq\emptyset$. Now, let $\tilde y\in\tilde{f}^{l}(K)\cap(K+P')$. So there are $n_i\in\Z$, $|n_i|<M$ and $v_i\in\Z^2$, $||v_i||_\infty <M$, for $i=1,2$, such that 
\begin{align*}
\tilde y\in \tilde{f}^l(K) &\Rightarrow \tilde y \in \tilde{f}^l(\tilde{f}^{n_1}(B(\varepsilon,\tilde z_0 ))+v_1)=\tilde{f}^{l+n_1}(B(\varepsilon,\tilde z_0 ))+v_1 \\
\tilde y \in K+P' &\Rightarrow \tilde y \in \tilde{f}^{n_2}(B(\varepsilon,\tilde z_0))+v_2+P'.
\end{align*} 
Then we get
\begin{align*}
\tilde{f}^{-n_2}(\tilde y) &\in(\tilde{f}^{l+n_1-n_2}(B(\varepsilon,\tilde z_0))+v_1)\cap B(\varepsilon,\tilde z_0+v_2+P')\\
\tilde{f}^{-n_2}(\tilde y) -v_1&\in\tilde{f}^{l+n_1-n_2}(B(\varepsilon,\tilde z_0))\cap B(\varepsilon,\tilde z_0+v_2-v_1+P').
\end{align*}
Thus, setting $N=l+n_1-n_2$ e $P=v_2-v_1+P'$, we get the result.
\end{proof}

Note that we can prove an analogous result for the case where $f$ does not have bounded deviation in the negative direction of $\rho_0^\perp$.

With all the results proven so far we can complete the proof of Theorem A.

\begin{proof}[Proof of  Theorem A]
Suppose by contradiction that $f$ does not have bounded deviation in the  direction of $\rho_0^\perp$. Let us assume that $f$ does not have bounded deviation in the positive direction of $\rho_0^\perp$ (the other case is analogous).

Since $(1,0)\prec 0$, we have by the Lemma \ref{lema1} that $\g$ and $\g+(1,0)$ are not $\tilde\F$-equivalents, and beyond that, $\g$ intersects a leaf in $l(\g+(1,0))$. Let $t_0$ be a moment in which such intersection occurs, i.e., $\g(t_0)$ belongs to a leaf, which we shall denote by $\phi_l$, which is contained in $l(\g+(1,0))$. Analogously, since $-(1,0)\succ 0$ we have by the Lemma \ref{lema1} that $\g$ and $\g-(1,0)$ are not $\tilde\F$-equivalents, and beyond that $\g$ intersects a leaf in $r(\g-(1,0))$. Let $t_1$ be such that  $\g(t_1)$ belongs to a leaf, which we shall denote by $\phi_r$, which is contained in $r(\g-(1,0))$. Note that, by Lemma \ref{lema1},we can assume both $t_0$ and $t_1$ are positive, and that $0<t_0<t_1$. Now, let $0<\varepsilon<\frac{1}{2}$ be given by Lemma \ref{lema2}, and also $\tilde x \in \R^2$, $N\in\N$ and $P\in\Z^2$ given by the Lemma \ref{lema3}. We will denote a fixed element of $\tilde{I}^{\Z}_{\tilde{\mathcal{F}}}(\tilde x)$ by $\bx$. 

\begin{figure} 
\centering
\includegraphics[scale = 0.75]{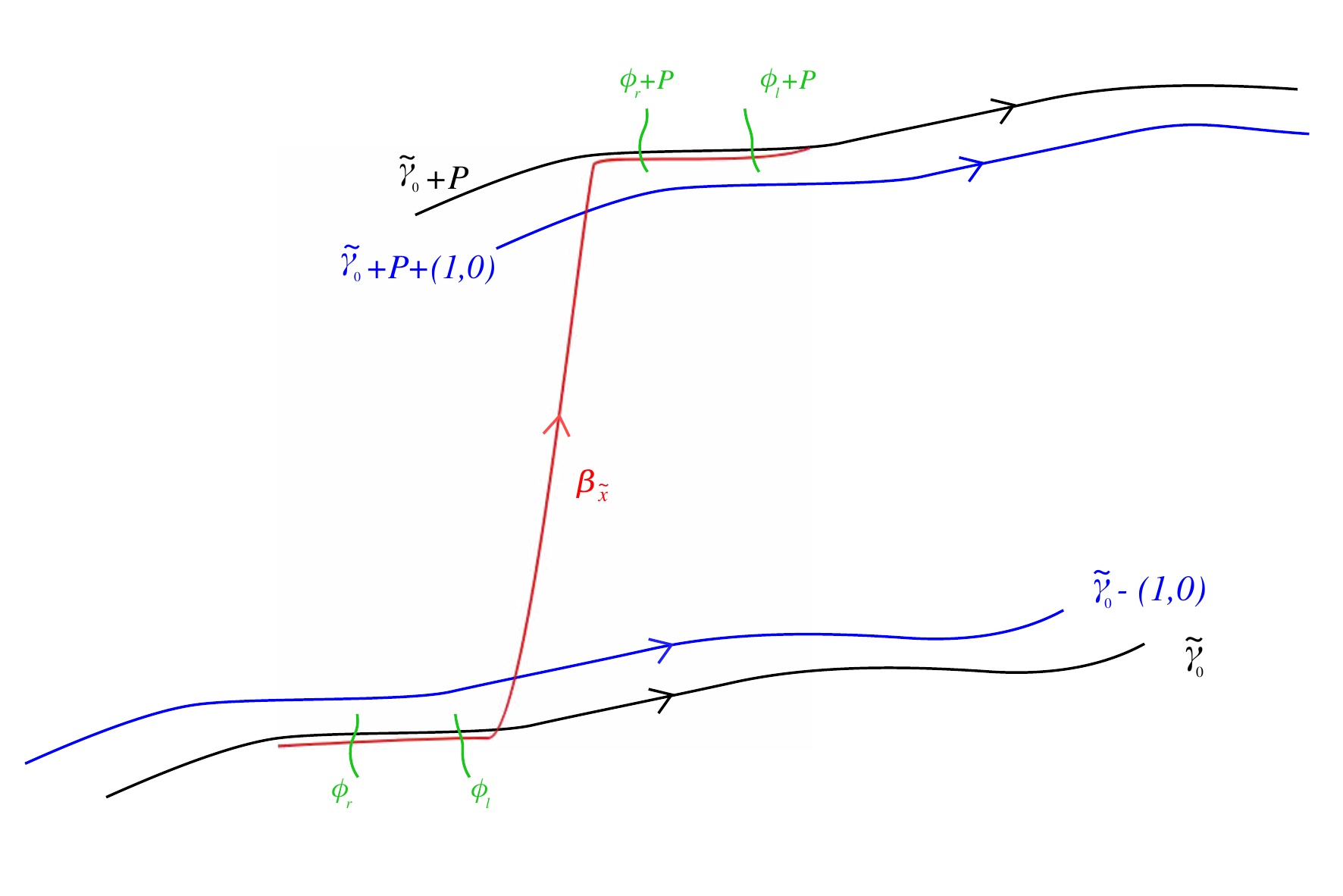}
\caption{Construction of $\bx$}
\label{fig1}
\end{figure}

Let us prove that $\bx$ intersects $r(\g-(1,0))$ and $l(\g+P+(1,0))$. Since $\tilde x\in B(\varepsilon,\tilde z_0)$, we have by the Lemma \ref{lema2} that $\g\vert_{[0, t_1]}$ is equivalent to a subpath of $\bx$, thus we have that $\bx$ crosses $\phi_r$. Similarly, since $\tilde f^N(\tilde x)\in\ B(\varepsilon,\tilde z_0+P)$, we have that $\g\vert_{[0,t_1]}+P$ is equivalent to a sub-path of $\bx$, and so $\bx$ intersects $\phi_l+P$. Let us denote by $I=[a,b]$ an interval such that  $[\bx(a)$ belongs to $\phi_r$, $\bx(b)$belongs to $\phi_l+P$.  

We claim that, for every $w\in\Z^2$ such that $-(1,0)\prec w\prec P+(1,0)$,   $(\g+w)$ intersects $\bx|_I$ $\tilde\F$-transversally.  Let us first prove that $[\phi_l+P]\subset l(\g+w)$ and $[\phi_r]\subset r(\g+w)$. Let us suppose by contradiction that $[\phi_r]\not\subset r(\g+w)$. We have two possibilities: $[\phi_r]\subset l(\g+w)$ or $\g+w$ crosses $\phi_r$. If $[\phi_r]\subset l(\g+w)$, we have that $[\g]\cap l(\g+w)\neq\emptyset$. In addition, since $0\prec-(1,0)\prec w$, we have by Lemma \ref{lema1} that $[\g]\cap r(\g+w)\neq\emptyset$. Therefore, by the Lemma \ref{lemadiresq}, we have that $\g\pitchfork_{\tilde\F}\g+w$, which is a contradiction, by the Lemma \ref{lemaauto}. If $\g+w$ crosses $\phi_r$, since $[\phi_r]\subset r(\g-(1,0))$, we have $[\g+w]\cap r(\g-(1,0))\neq\emptyset$. In addition, since $-(1,0)\prec w$, we have by Lemma \ref{lema1} that $[\g]\cap l(\g-(1,0)-w)\neq\emptyset$, and thus $[\g+w]\cap l(\g-(1,0))\neq\emptyset$. Again, by the Lemma \ref{lemadiresq} we have that $(\g-(1,0))\pitchfork_{\tilde\F}(\g+w)$, which is a contradiction, by the Lemma \ref{lemaauto}. Therefore we have that $[\phi_r]\subset r(\g+w)$. In a symmetrical way, using the fact that $w\prec P+(1,0)\prec P$, we can prove that $[\phi_l+P]\subset l(\g+w)$. So, by the Lemma \ref{lemadiresq}, we have that $(\g+w)\pitchfork_{\tilde{\F}}\bx|_I$, proving the claim. Note that, by Lemma \ref{lemasubcaminhosadmissiveis}, that both $\g+w$ and $\bx\mid_{I}$ are admissible.

Now let $w_1,w_2\in\Z^2$ be such that $-(1,0)\prec w_1\prec w_2\prec P+(1,0)$. Since $(\g+w_1)\pitchfork_{\tilde{\F}}\bx|_I$, we have that there are $t',s'\in\R$ such that $\g+w_1$ and $\bx$ intersect $\tilde\F$-transversally at $(\g+w_1)(t')=\bx(s')$. In particular, one can find an interval $J_1=[a_1, b_1]$ containing $t'$ such that $(\g+w_1)\mid_{J}\pitchfork_{\tilde\F}\bx$ and Lemma \ref{lemasubcaminhosadmissiveis} and Proposition \ref{propadm} show that, for any $c<a_1$ the path $beta_c'=(\g+w_1)|_{[c,t']}\,\bx|_{[s',b]}$ is admissible.
 Note that, by Lemma \ref{lema1}, there exists some $c_0<a_1$ such that $\phi_0=\phi_{(\g+w_1)(c_0)}$ is in $r(\g+w_2)$. This implies that, for any $c\leq c_0$, $\beta'_c \pitchfork_{\tilde\F} \g+w_2$, as it intersects both $\phi_0\subset r(\g+w_2)$ and $\phi_l+P\subset l(\g+w_2)$. Let $J_2=[a_2, b_2]$ be an interval such that $\beta'_c \pitchfork_{\tilde\F} (\g+w_2)\mid_{J_2}$. We can, as in the proof of Lemma \ref{lema1}, find $w_3$ such that there exists some interval $J_3=[a_3,b_3]$ with $b_3< c_0$ such that $(\g+w_3)\mid_{J_3}$ is equivalent to $(\g+w_2)\mid_{J_2}$.  But this implies that the path $\beta_{a_3}'$ has a transverse intersection with $\beta_{a_3}'+w_3-w_1$ since the latter has a subpath equivalent to $(\g+w_1)\mid_{j_3}+(w_3-w_1)=(\g+w_3)\mid_{j_3}$, a contradiction with Lemma  \ref{lemaauto}, concluding the demonstration.

\begin{figure} 
\centering
\includegraphics[scale = 0.75]{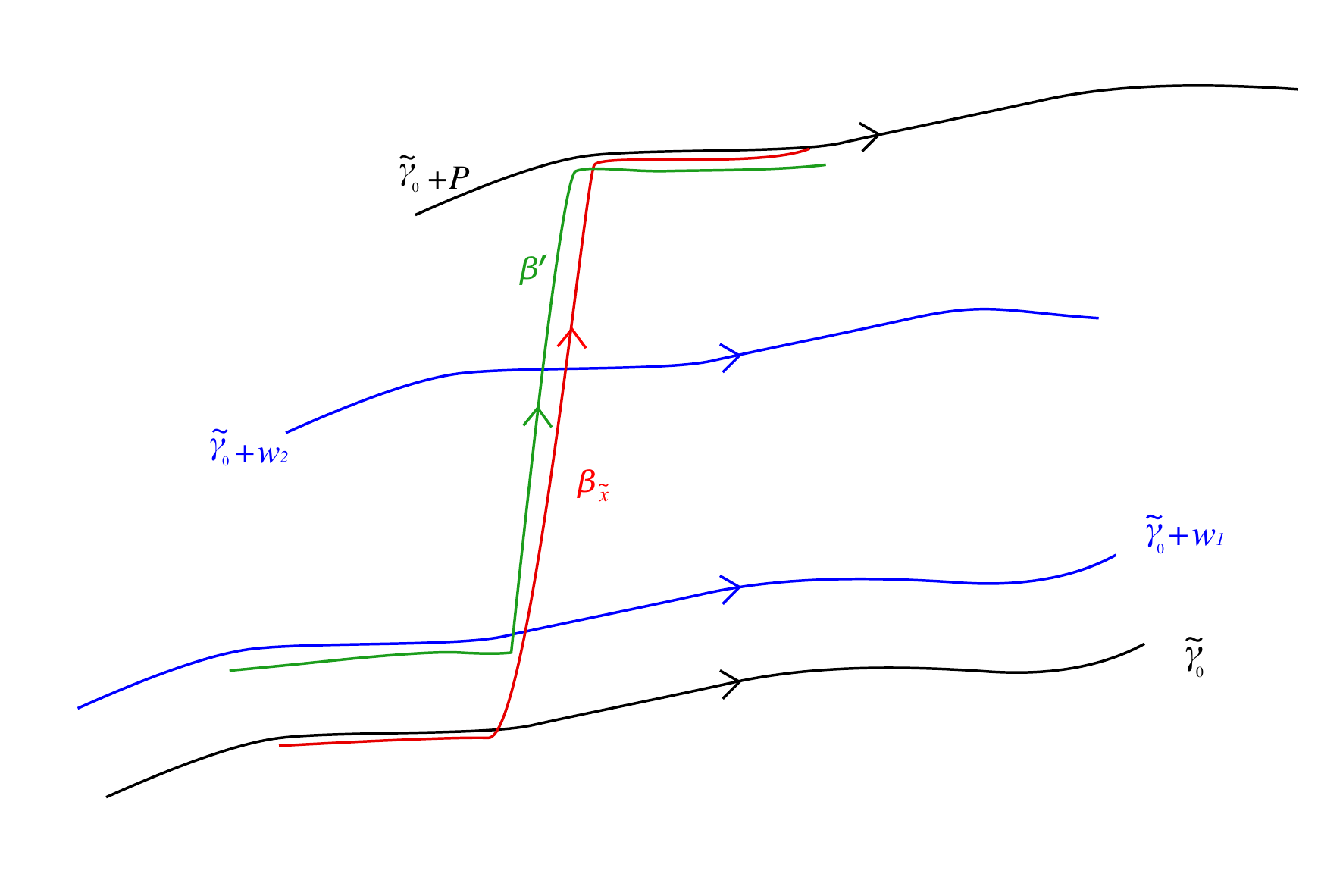}
\caption{Construction of $\beta'$}
\label{fig2}
\end{figure}

\begin{figure} 
\centering
\includegraphics[scale = 0.75]{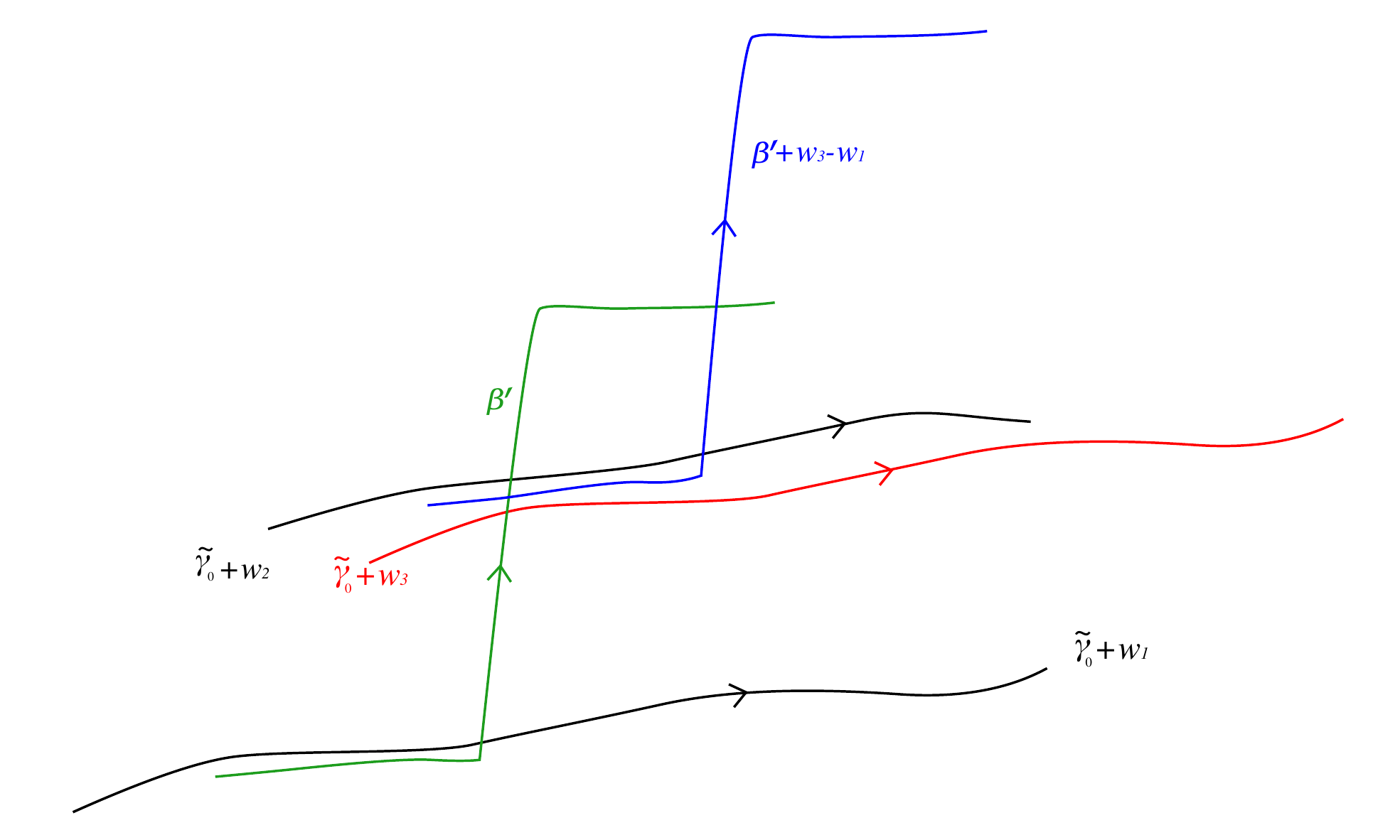}
\caption{Intersection of $\beta'$ with $\beta'+w_3-w_1$}
\label{fig3}
\end{figure}
\end{proof}

\subsection{Proof of Corollary B}
Assume $f:\T\to\T$ is isotopic to the identity, has a periodic point,  let $\tilde f:\R^2\to\R^2$ be a lift of $f$ and assume $\rho(\tilde f)$ is a non-degenerate line segment. Note that, as $f$ has a periodic point, $\rho(\tilde f)$ has at least one point in $\Q^2$. If $\rho(\tilde f)$ has two distinct points in $\Q^2$, then the result follows directly from \cite{Davalos2013SublinearDiffusion}. So we may assume that $\rho(\tilde f)$ has a single point with rational coordinates, and then Theorem C from \cite{LeCalvezTal2015ForcingTheory} implies that there exists integers $p_1, p_2, q$ and some vector $rho_0$  such that $\rho(\tilde f)\{(p_1/q, p_2/q) +t \rho_0 \mid 0\le t \le q\}$. But then we can take $g=f^q$ and its lift $\tilde g= \tilde f^q-(p_1, p_2)$ and apply Theorem A to them, deducing that $g$ has uniformly bounded deviations in the direction $\rho_0^{\perp}$. But if a power of $f$ has uniformly bounded deviations in a given direction, $f$ itself must also have this property, which concludes the proof of the Corollary.

\bibliography{dynamics} \bibliographystyle{alpha}

\end{document}